\documentclass[11pt,reqno]{amsart}
\usepackage[T1]{fontenc}
\usepackage[utf8]{inputenc}
\usepackage{lmodern}
\usepackage{amsmath,amssymb,mathtools}
\usepackage{esint}
\usepackage[letterpaper,hmargin=1.125in,vmargin=1in]{geometry}
\usepackage{microtype}
\usepackage{enumitem}
\usepackage{xcolor}
\usepackage{longtable}
\usepackage[colorlinks=true,linkcolor=blue!45!black,citecolor=blue!45!black,urlcolor=blue!45!black]{hyperref}
\hypersetup{pdftitle={Measure contraction property on isometric leaves and monotone fibres},pdfauthor={Krzysztof J. Ciosmak}}

\newtheorem{theorem}{Theorem}[section]
\newtheorem{lemma}[theorem]{Lemma}
\newtheorem{proposition}[theorem]{Proposition}
\newtheorem{corollary}[theorem]{Corollary}
\theoremstyle{remark}
\newtheorem{remark}[theorem]{Remark}
\theoremstyle{plain}
\newtheorem{maintheorem}{Theorem}

\numberwithin{equation}{section}

\newcommand{\R}{\mathbb R}
\newcommand{\N}{\mathbb N}
\renewcommand{\S}{\mathcal S}
\newcommand{\T}{T}
\newcommand{\Hh}{\mathcal H}
\newcommand{\CC}{CC}
\newcommand{\norm}[1]{\lVert#1\rVert}
\newcommand{\ip}[2]{\langle#1,#2\rangle}
\newcommand{\ri}{\operatorname{int}}
\DeclareMathOperator{\cl}{cl}
\DeclareMathOperator{\Aff}{Aff}
\DeclareMathOperator{\Conv}{Conv}
\DeclareMathOperator{\dist}{dist}
\DeclareMathOperator{\supp}{supp}
\DeclareMathOperator{\dom}{dom}
\DeclareMathOperator{\ran}{ran}
\DeclareMathOperator{\graph}{graph}
\DeclareMathOperator{\Id}{Id}
\DeclareMathOperator{\Lip}{Lip}
\DeclareMathOperator{\MCP}{MCP}
\DeclareMathOperator{\CD}{CD}
\DeclareMathOperator{\Ric}{Ric}

\DeclareMathOperator*{\argmin}{argmin}
\newcommand{\TV}{\mathrm{TV}}
\newcommand{\dd}{\,d}
\setlist[enumerate,1]{label=(\roman*),leftmargin=2.2em,itemsep=2pt,topsep=4pt}

\title[Measure contraction property on isometric leaves]{Measure contraction property on isometric leaves and monotone fibres}
\author{Krzysztof J. Ciosmak}
\address{Beijing Institute of Mathematical Sciences and Applications, No. 544, Hefangkou Village, Huaibei Town, Huairou District, Beijing 101408, China}
\email{kciosmak@bimsa.cn}
\date{8 September 2026}
\subjclass[2020]{Primary: 28A50, 53C23; Secondary: 47H05,  49Q22, 52A20}
\keywords{Disintegration, isometric leaf, measure contraction, monotone operator, curvature-dimension condition, Gaussian measure}

\begin{document}
\begin{abstract}
For finite measures with positive densities on convex Euclidean
supports, we prove that $\MCP(\kappa,N)$ passes with unchanged
parameters to almost every isometric leaf of an arbitrary
nonexpansive map. The proof rests on a sharp contraction inequality
for geometric conditional densities, with exponent equal to the
leaf codimension. The inherited dimension parameter is optimal.
A total-variation limit on resolvent graphs extends the result
to inverse fibres of maximal monotone relations, including convex gradient fibres.
We also disprove Klartag's curvature-dimension inheritance
conjecture by a firmly nonexpansive example in dimension three
and a gradient example in dimension four.
In codimension one, affinity of the geometric density yields
curvature-dimension inheritance.
The first example also gives failure on monotone fibres.
Both constructions admit arbitrarily large curvature loss,
including for a fixed Gaussian ambient measure on families
of leaves of positive quotient measure.
\end{abstract}
\maketitle

\section{Introduction}\label{sec:introduction}

Klartag conjectured that synthetic lower Ricci curvature bounds and upper
dimension bounds are inherited by the conditional measures on isometric
leaves of vector-valued nonexpansive maps \cite[Chapter~6]{r24}. We show
that this is false, even for firmly nonexpansive maps in dimension three,
and establish a general replacement: the measure contraction property is
inherited by almost every leaf, on every dimensional stratum, with
unchanged curvature and dimension parameters. The result also extends to
inverse fibres of maximal monotone relations and, in particular, to fibres
of the gradient of an arbitrary convex function. In codimension one we
prove the stronger curvature-dimension inheritance, thereby identifying
dimension three as the smallest ambient dimension in which Klartag's conjecture
can fail.

A leaf of a $1$-Lipschitz map $u:\R^n\to\R^m$ is a maximal set on which
$u$ preserves distances. Leaves are closed and convex, and almost every
point belongs to a unique leaf \cite{r16}. Consequently, every finite
measure $\mu\ll\lambda$ admits a normalised disintegration
\begin{equation}\label{eq:1.1}
\mu=\int_{\CC(\R^n)}\mu_{\S}\dd\nu(\S).
\end{equation}
Here $\lambda$ denotes Lebesgue measure, $\CC(\R^n)$ is the space of
nonempty closed convex subsets of $\R^n$, and the conditional measures
$\mu_{\S}$ have mass one. All leaf metrics are restrictions of the
 Euclidean metric.

We use the curvature-dimension condition $\CD(\kappa,N)$
of Lott--Villani and Sturm \cite{r26,r27,r32}.
In our weighted Euclidean setting, it can be expressed by
a lower bound on the Bakry--\'Emery tensor.
Section~\ref{sec:curvature} recalls the precise characterisation
for positive $\mathcal C^2$ densities on closed convex supports.
In particular, $\CD(0,\infty)$ is equivalent in this setting
to log-concavity of the density.

The convexity of the leaves does not, by itself, settle the inheritance
problem. Although the intrinsic metric is Euclidean, the conditional
measure retains information about the arrangement of the leaves in the
ambient space. If $\mu$ has density $h$, then its conditional density on
a $k$-dimensional leaf $\S$ is not generally just a normalised restriction of
$h$: an additional geometric factor $g_{\S}$ appears. The distortion of
this factor determines whether the ambient measure-contraction or
curvature-dimension bound survives disintegration.

A simple example is the radial decomposition induced by
$u(x)=\norm{x}$, $x\in\mathbb{R}^n$. Normalised Lebesgue measure on the unit ball in $\R^n$
induces, on each radial segment, the conditional measure
\begin{equation*}
    d\mu_\theta(r)=n r^{n-1}\,dr\text{ with } 0<r<1.
\end{equation*}
Contraction towards the origin therefore multiplies the conditional mass
of a Borel set by $t^n$. The conditional measure is supported
on a one-dimensional set, but its contraction behaviour still records the
ambient dimension. The geometric density in this example is proportional
to $r^{n-1}$; the general estimate that we will establish replaces this exponent by the
codimension of the leaf.


\subsection{Measure contraction property and the geometric density}

We use the measure contraction property $\MCP(\kappa,N)$ in Ohta's
normalisation \cite[Definition~2.1]{r29}; see also \cite{r32}.
For $\kappa\in\R$ and $1<N<\infty$, a finite measure $\eta$ with
closed convex support $Y$ in a Euclidean affine space satisfies this
condition precisely when
\begin{equation*}
    \eta\bigl(H_{o,t}(E)\bigr)
 \ge \int_E\beta_{\kappa,N}^{(t)}(\norm{x-o})\dd\eta(x)\text{, where } H_{o,t}(x)=o+t(x-o),
\end{equation*}
for every $o\in Y$, $0<t\le1$ and Borel set
$E\subset Y\cap B(o,R_{\kappa,N})$; compare
\cite[Lemma~2.3]{r29}. Here $\beta_{\kappa,N}^{(t)}$ is the distortion
coefficient defined in \eqref{eq:2.4}, and
$R_{\kappa,N}=\pi\sqrt{(N-1)/\kappa}$ for $\kappa>0$, while
$R_{\kappa,N}=\infty$ for $\kappa\le0$. In particular,
$\beta_{0,N}^{(t)}=t^N$, so the zero-curvature condition reads
\[
 \eta\bigl(o+t(E-o)\bigr)\ge t^N\eta(E).
\]
Thus $\MCP$ controls contraction towards a point, rather than the
interpolation between two probability distributions tested by $\CD$.
Our first theorem shows that disintegration along isometric leaves
preserves this contraction bound with exactly the same parameters.

\begin{maintheorem}[Measure contraction property on leaves]\label{thm:A}
Let $X\subset\R^n$ be closed and convex with nonempty interior, and let
$d\mu=h\,d\lambda|_X$ be finite, with $0<h<\infty$ almost everywhere.
Suppose that $(X,\norm{\cdot},\mu)$ satisfies $\MCP(\kappa,N)$, where
$\kappa\in\R$, $1<N<\infty$ and $N\ge n$. For every $1$-Lipschitz map
$u:\R^n\to\R^m$, almost every leaf satisfies
\begin{equation*}
    \supp\mu_{\S}=\S\cap X,\quad
\mu_{\S}\sim\Hh^{\dim\S}|_{\S\cap X},
\end{equation*}
and $(\S\cap X,\norm{\cdot},\mu_{\S})$ satisfies $\MCP(\kappa,N)$.
The dimension parameter $N$ cannot be reduced in general.
\end{maintheorem}

Theorem~\ref{thm:4.1} proves the assertion outside a single exceptional
family of leaves, independently of the centre and the contraction time.
Proposition~\ref{prop:4.3} proves sharpness even for scalar maps and ambient
$\CD(0,N)$ measures with positive smooth density in the interior of a ball.
Thus the retention of the dimensional parameter $N$, rather than a smaller parameter suggested by
the leaf dimension, is necessary.

The underlying geometric assertion requires no curvature assumption.
Theorem~\ref{thm:3.1} constructs geometric densities $g_{\S}$, determined
up to a positive, multiplicative constant on each leaf and depending only on the
decomposition. For every finite absolutely continuous measure
$d\mu=h\,d\lambda$, with $h\ge0$, its conditional measures satisfy
\begin{equation}\label{eq:1.2}
 d\mu_{\S}
 =\frac{h g_{\S}}{\int_{\S}h g_{\S}\dd\Hh^k}
   \dd\Hh^k|_\S
\end{equation}
on $\nu$-almost every $k$-dimensional leaf, with a positive finite
denominator. For a measure supported on $X$, the ambient density is
extended by zero outside $X$. The relative boundaries carry no
conditional mass. For $k>0$, the geometric density is positive on
$\ri\S$, its logarithm is locally Lipschitz there, and
\begin{equation}\label{eq:1.3}
 g_{\S}(o+t(x-o))\ge t^{n-k}g_{\S}(x)
\text{ for all }o\in\S,\ x\in\ri\S,\ 0<t\le1.
\end{equation}
The construction applies simultaneously on all dimensional strata.
Corollary~\ref{cor:3.8} also gives sharp directional estimates for
$\log g_{\S}$ in terms of distances to the ends of leaf chords. In
particular, $g_{\S}$ is constant along every complete line contained in
$\ri\S$.

The codimension exponent explains why the ambient dimension parameter
is preserved. In the zero-curvature case, the ambient density inequality
supplies a factor $t^{N-n}$, while \eqref{eq:1.3} supplies $t^{n-k}$.
The Jacobian of contraction within a $k$-dimensional leaf is $t^k$, so the product of these factors is $t^N$.
For nonzero curvature, the same cancellation preserves the full ambient
distortion coefficient. Combining the geometric estimate with the
pointwise ambient inequality of Lemma~\ref{lem:2.1} therefore yields
Theorem~\ref{thm:A}.

The main difficulty is to obtain \eqref{eq:1.3} without assuming a smooth
transverse parametrisation of the leaves. We compare the volumes of
transverse slices instead of differentiating the field of affine hulls.
The starting point is the nonnegative equality defect
\begin{equation*}
     c_u(x,y)=\norm{x-y}^2-\norm{u(x)-u(y)}^2\text{ for } x,y\in\mathbb{R}^n.
\end{equation*}
At a relative interior point of a leaf, vanishing of this defect
identifies points on the same leaf. Finite-target minimisation of the
defect gives disjoint interpolating segments in the transverse
comparison. Approximating the correspondence between slices by these
maps yields a volume estimate. This estimate first shows that the union
of relative leaf boundaries is Lebesgue-negligible, then gives equivalence of
the slice measures and constructs the conditional densities. A final
regularity argument produces the positive continuous representatives
for which \eqref{eq:1.3} holds at every admissible pair of points.

\subsection{Monotone fibres}

The second family of decompositions comes from monotone operators.
A relation $A:\R^n\rightrightarrows\R^n$ is called monotone if
\begin{equation*}
    \langle p-q,x-y\rangle\ge0
\text{ for all } p\in A(x), q\in A(y),\, x,y\in\mathbb{R}^n
\end{equation*}
For a single-valued map $T:D\subset\R^n\to\R^n$, this means
$\langle T(x)-T(y),x-y\rangle\ge0$ for all $x,y\in D$.
A monotone relation is maximal monotone if its graph is not properly
contained in any other monotone graph. We refer to Alberti--Ambrosio
\cite{r1} for the finite-dimensional geometric viewpoint and to
Bauschke--Combettes \cite{r4} for the general operator theory.

The fibres considered below are the inverse values
$A^{-1}(p)=\{x\in\mathbb{R}^n\mid p\in A(x)\}$, which are closed and convex when $A$ is
maximal monotone \cite{r4}. A basic example is $A=\partial f$, where
$f$ is proper, lower-semicontinuous and convex: its subdifferential is
maximal monotone \cite[Theorem~A]{r31}. At points $x$ where $f$ is
differentiable, $\partial f(x)=\{Df(x)\}$, and the inverse values
$(\partial f)^{-1}(p)$ are the contact sets with supporting affine
functions of slope $p$. The passage from isometric leaves to monotone
fibres is made through the resolvent.

\begin{maintheorem}[Measure contraction on monotone fibres]\label{thm:B}
Under the ambient assumptions of Theorem~\ref{thm:A}, let
$A:\R^n\rightrightarrows\R^n$ be maximal monotone and suppose that
$\mu(\R^n\setminus\dom A)=0$. Disintegrate $\mu$ with respect to any
Borel selection of $A$. For almost every label $p$, put
$F_p=A^{-1}(p)$ and $Y_p=F_p\cap X$. Then
\[
 \supp\mu_p=Y_p,\quad
 \mu_p\sim\Hh^{\dim F_p}|_{Y_p},
\]
and $(Y_p,\norm{\cdot},\mu_p)$ satisfies $\MCP(\kappa,N)$.
The disintegration is independent of the selection. In particular,
these conclusions hold for $A=\partial f$, whenever $f$ is proper,
lower-semicontinuous, convex and finite on $\ri X$.
\end{maintheorem}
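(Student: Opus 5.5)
The plan is to reduce Theorem~\ref{thm:B} to Theorem~\ref{thm:A} through the resolvent $J=(\Id+A)^{-1}$. By Minty's theorem $J$ is single-valued, firmly nonexpansive and defined on all of $\R^n$. I would use the $1$-Lipschitz map $u=(J,\Id-J):\R^n\to\R^{2n}$ with the scaled norm $\norm{(a,b)}^2=\norm{a}^2+\norm{b}^2$. Firm nonexpansiveness is exactly the inequality
\[
\norm{J x-Jy}^2+\norm{(\Id-J)x-(\Id-J)y}^2\le\norm{x-y}^2,
\]
so $u$ is $1$-Lipschitz. Its isometric leaves are the maximal sets on which equality holds.

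The fibres of $A$ are not the fibres of $J$. The set $A^{-1}(p)$ corresponds to points $x$ with $p\in A(x)$, that is, $J(x+p)=x$. So $F_p=J(\cdot)$ restricted to $F_p+p$, and the natural object is the shifted graph. The approximating idea mentioned in the abstract is a total-variation limit on resolvent graphs. Accordingly, I would consider $A_\varepsilon=\varepsilon^{-1}$-scaled resolvents: for $\varepsilon>0$, take $T_\varepsilon=\varepsilon^{-1}(\Id-J_{\varepsilon A})$, the Yosida approximation. Its level structure of $J_{\varepsilon A}$ (the map $x\mapsto x+\varepsilon a$ for $a\in A(x)$) is a nonexpansive map whose leaves contain translates of the fibres. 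More directly, I would work with $\varepsilon A$, whose fibres coincide with those of $A$ up to relabelling $p\mapsto\varepsilon p$. I would then consider the map $\Phi_\varepsilon(x)=x+\varepsilon s(x)$, where $s$ is the Borel selection. On fibres, $\Phi_\varepsilon$ is a translation by $\varepsilon p$, so $\mu_p$ pushed forward is the conditional measure of $(\Phi_\varepsilon)_\#\mu$ on leaves of the firmly nonexpansive $J_{\varepsilon A}$ contained in $F_p+\varepsilon p$. Theorem~\ref{thm:A} applies to the ambient measure only when it has $\MCP$, so instead of pushing forward I would apply Theorem~\ref{thm:3.1} and Theorem~\ref{thm:4.1} to $\mu$ itself with the $1$-Lipschitz map $u_\varepsilon=J_{\varepsilon A}$ composed appropriately. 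The leaves of $J_{\varepsilon A}$ relative to $\mu$ converge to fibres of $A$ as $\varepsilon\to0$. The conditional measures converge in total variation, because the densities $hg_\S/\int hg_\S$ are determined by the geometric densities, and MCP is closed under such limits.

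The key steps are as follows.

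\textbf{Step 1.} For $\mu$-almost every $x\in\dom A$, show that the leaf of $J_{\varepsilon A}$ through $x+\varepsilon s(x)$, translated back, equals $F_{s(x)}$ for all small $\varepsilon$, or at least increases to $F_{s(x)}$. Here I would use that on the leaf the equality defect $c_u$ vanishes, which forces $A$ to take a common value there.

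\textbf{Step 2.} Establish that the disintegrations along these leaves converge in total variation to the disintegration $\{\mu_p\}$. Selection independence follows because two selections differ only on $\{x:A(x)\text{ multivalued}\}$. The multivalued points lie on relative boundaries of fibres, and the paper shows such relative boundaries are Lebesgue-negligible.

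\textbf{Step 3.} Pass the inequality $\eta(H_{o,t}E)\ge\int_E\beta\,d\eta$ to the limit, using total-variation convergence and the support identification $\supp\mu_p=Y_p$.

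The case $A=\partial f$ follows because $\partial f$ is maximal monotone and $\ri X\subset\dom\partial f$, so $\mu(\R^n\setminus\dom A)=0$, since $\partial X$ is null.

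The main obstacle is Step 1 together with Step 2: identifying fibres of $A$ with isometric leaves, or with limits of them, of a nonexpansive map, in a way compatible with $\mu$ rather than with a pushforward. If that fails, I would instead apply the geometric density construction directly to the graph parametrisation $x\mapsto x+s(x)$. I would then check that the codimension estimate \eqref{eq:1.3} survives the change of variables $x\mapsto J(x)$ on each leaf, which is an isometry along leaves.
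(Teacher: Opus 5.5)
\textbf{There is a genuine gap: Step~2 is asserted, not proved.} Your top-level architecture is the paper's. The isometric leaves of $J_\epsilon=(\Id+\epsilon A)^{-1}$ are exactly the translates $\epsilon p+F_p$, for every $\epsilon>0$ and every label, not merely ``increasing to'' $F_p$: expand $\norm{z-w}^2$ with $z=x+\epsilon p$, $w=y+\epsilon q$ (Lemma~\ref{lem:5.2}). Theorem~\ref{thm:4.1} is applied to $\mu$ itself with $J_\epsilon$, and the contraction inequality is passed to the limit. (Drop the map $(J,\Id-J)$ of your first paragraph: its leaves are not fibres of $A$, and for a nonzero skew-symmetric linear $A$ it is a global isometry, so $\R^n$ is its only leaf.)

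Step~2 is the heart of the argument. Saying the conditionals converge ``because the densities $hg_{\mathcal S}/\int hg_{\mathcal S}$ are determined by the geometric densities'' does not establish it, for three reasons:
\begin{enumerate}
\item For each $\epsilon$, the family $\{\epsilon p+F_p\}_p$ is a different partition of $\R^n$.
\item Theorem~\ref{thm:3.1} gives no control of the geometric densities as $\epsilon$ varies.
\item Translating each leaf back to $F_p$ is an isometry, but it carries the conditionals of $\mu$ to conditionals of a different ambient measure, not to those of $\mu$ on $\{F_p\}$. Your fallback has the same defect.
\end{enumerate}

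The missing idea is to pass to one space on which all these partitions coincide and only the measure varies. On $G=\graph A$, monotonicity gives $\norm{(x-y)+\epsilon(p-q)}^2\ge\norm{x-y}^2+\epsilon^2\norm{p-q}^2$. With Minty's theorem, $\Phi_\epsilon(x,p)=x+\epsilon p$ is then a bi-Lipschitz bijection of $G$ onto $\R^n$, so $G$ is $n$-rectifiable. The area formula writes $\gamma_\epsilon=(\Phi_\epsilon^{-1})_\#\mu$ and $\gamma_0=R_\#\mu$, with $R(x)=(x,s(x))$, as $h(\Phi_\epsilon)J_G\Phi_\epsilon\,\Hh^n|_G$ and $h(\pi_x)J_G\pi_x\,\Hh^n|_G$; the latter uses $\lambda(M_A)=0$. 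Approximating $h$ in $L^1$ by continuous functions gives $\norm{\gamma_\epsilon-\gamma_0}_{\TV}\to0$. Disintegrating both measures along $\pi_p$ and projecting by $\pi_x$ yields exactly $\widehat\mu_p^\epsilon$ and $\mu_p$. Lemma~\ref{lem:5.3} then gives $\int\norm{\widehat\mu_p^\epsilon-\mu_p}_{\TV}\dd\eta(p)\le2\norm{\gamma_\epsilon-\gamma_0}_{\TV}$ (Proposition~\ref{prop:5.4}).

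Your remark that $x\mapsto x+\epsilon s(x)$ carries $\{F_p\}$ label-preservingly onto $\{\epsilon p+F_p\}$ is an equivalent reformulation. Via Lemma~\ref{lem:5.3}, it reduces Step~2 to $\norm{(\Id+\epsilon s)_\#\mu-\mu}_{\TV}\to0$. You discarded it, and that convergence, which equals $\norm{\gamma_0-\gamma_\epsilon}_{\TV}$, is precisely the nontrivial statement, since $s$ may be discontinuous.

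\textbf{Several other steps also need repair.}
\begin{enumerate}
\item \emph{Selection independence.} It does not follow from your claim that multivalued points lie on relative boundaries of fibres, because that claim is false. For $A=\partial f$ with $f(x)=|x|+x^2$ on $\R$, the point $0$ is multivalued, yet every fibre through it is $\{0\}$, whose relative boundary is empty. Moreover, Lemma~\ref{lem:3.3} concerns leaves of one Lipschitz map, not the family $\{F_p\}$. What is needed is $\lambda(M_A)=0$ (Lemma~\ref{lem:5.1}). Along lines in a fixed direction $e$, the sets $\{\ip{p}{e}: p\in A(x)\}$ are intervals with pairwise disjoint interiors, so Fubini applies.
\item \emph{Support and measure class.} In Step~3 you use $\supp\mu_p=Y_p$ as an input. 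Total-variation limits of fully supported measures need not be fully supported. You also never address $\mu_p\sim\Hh^{\dim F_p}|_{Y_p}$: such limits preserve absolute continuity, not positivity of the density. Both must be derived from the limiting inequality. The support follows by contracting sets of positive mass towards arbitrary centres, and the measure class by Lemma~\ref{lem:5.6}.
\item \emph{Centres.} The approximants $\widehat\mu_p^\epsilon$ are supported on $F_p\cap(X-\epsilon p)$ rather than on $Y_p$. Centres should therefore be taken in $F_p\cap\ri X$, which lie in these sets for small $\epsilon$, before applying Lemma~\ref{lem:4.2} and density.
\item \emph{Exceptional labels.} The exceptional labels in Theorem~\ref{thm:4.1} are only $\eta_\epsilon$-null and change with $\epsilon$. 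This is handled by choosing $\epsilon_j$ with $\sum_j\norm{\gamma_{\epsilon_j}-\gamma_0}_{\TV}<\infty$ and applying Borel--Cantelli with $\eta(B_j)\le\norm{\eta-\eta_{\epsilon_j}}_{\TV}$. The same choice also gives convergence for $\eta$-almost every $p$.
\end{enumerate}
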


By Minty's theorem, for every $\epsilon>0$ the resolvent
$J_\epsilon=(\Id+\epsilon A)^{-1}$ is an everywhere-defined,
single-valued, firmly nonexpansive map \cite{r4}. The last property means
\begin{equation*}
     \norm{J_\epsilon z-J_\epsilon w}^2
 \le\langle J_\epsilon z-J_\epsilon w,z-w\rangle
 \text{ for all }z,w\in\R^n.
\end{equation*}
Its isometric leaves are exactly the translates
$\epsilon p+A^{-1}(p)$, as proved in Lemma~\ref{lem:5.2}.
This identifies the geometry, but does not by
itself identify the limiting disintegration: both the conditional
measures and their quotient measures depend on $\epsilon$.
Proposition~\ref{prop:5.4} resolves this difficulty by proving a
total-variation limit on the monotone graph and transferring it to the
conditional measures. Theorem~\ref{thm:5.5} then passes the contraction
inequality to $A^{-1}(p)$ as $\epsilon$ tends to $0$. Localisation and
change of density recover the Hausdorff measure class on convex fibres
and graph faces for arbitrary absolutely continuous ambient measures,
without a curvature assumption.

\subsection{Curvature-dimension condition inheritance and its failure}

The geometric contraction estimate is weaker than the concavity needed
for curvature-dimension condition inheritance. It controls how rapidly the density can
decrease under a homothety, but does not generally control its second
derivatives with the sign required by a curvature-dimension bound.
In codimension one, however, the ordering of disjoint affine hypersurface
sections forces the geometric density to be affine. This gives the
following positive result and the sharp threshold for failure in ambient
dimension.

\begin{maintheorem}[Curvature-dimension condition inheritance and its failure]\label{thm:C}
Let $X\subset\R^n$ be closed and convex with nonempty interior
$\Omega$, and let $d\mu=e^{-\rho}\dd\lambda|_X$ be finite, with
$\rho\in\mathcal C^2(\Omega)$. Suppose that
$(X,\norm{\cdot},\mu)$ satisfies $\CD(\kappa,N)$, where
$\kappa\in\R$ and $N\in[n,\infty]$. For every $1$-Lipschitz map
$u:\R^n\to\R^m$, almost every leaf of codimension one inherits
$\CD(\kappa,N)$. Consequently, almost every leaf inherits this
condition when $n\le2$.

There is a globally firmly nonexpansive map on $\R^3$ and a Euclidean
ball whose Lebesgue conditionals are one-dimensional and fail
$\CD(0,N')$ for every $N'\in(1,\infty]$. There is also a globally
$1$-Lipschitz gradient on $\R^4$, smooth near a Euclidean ball, whose
Lebesgue conditionals are two-dimensional and fail $\CD(0,N')$ for every
$N'\in(2,\infty]$.
\end{maintheorem}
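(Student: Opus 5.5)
The positive part rests on the geometric density representation \eqref{eq:1.2} from Theorem~\ref{thm:3.1}, together with the additional claim that on codimension-one leaves the geometric density $g_{\S}$ is affine. First I would prove this affinity. Let $\S$ be an $(n-1)$-dimensional leaf. Nearby leaves of the same dimension are relatively open pieces of affine hyperplanes, and they are pairwise disjoint. Two disjoint hyperplane pieces whose relative interiors overlap in projection cannot cross inside the region they both cover. I would therefore compare the transverse slices used in the construction of $g_{\S}$. Take a short transversal segment through $x\in\ri\S$ and look at the infinitesimal spacing between $\S$ and its neighbouring leaves. Non-crossing forces this spacing, as a function on $\S$, to be the restriction of a nonnegative affine function. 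The reason is that the gap between two non-crossing hyperplanes is an affine function of the position, and limits of ratios of affine functions along a common family stay affine. Since $g_{\S}$ is the density of the quotient, i.e.\ the reciprocal transverse compression, this makes $g_{\S}$ affine and positive on $\ri\S$. It is consistent with the estimate \eqref{eq:1.3} with exponent $n-k=1$, since a positive affine function $g$ satisfies $g(o+t(x-o))\ge t g(x)$. To make this rigorous I would approximate $g_{\S}$ by the finite slice-volume ratios of Section~3 and pass to the limit, using the local Lipschitz regularity of $\log g_{\S}$.

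With affinity established, the conditional density on $\S\cap X$ is $e^{-\rho}\ell$ with $\ell>0$ affine, up to normalisation. By the characterisation in Section~\ref{sec:curvature}, $\CD(\kappa,N)$ for a $\mathcal C^2$ density $e^{-\rho}$ on an $n$-dimensional convex set is the Bakry--\'Emery condition
\[
D^2\rho-\tfrac{1}{N-n}D\rho\otimes D\rho\ge\kappa .
\]
On the $(n-1)$-dimensional leaf we need the same inequality for $\psi=\rho|_{\S}-\log\ell$, with $N-n$ replaced by $N-(n-1)=(N-n)+1$. Since $\ell$ is affine, $D^2(-\log\ell)=v\otimes v$ with $v=D\ell/\ell$. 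For $a=D\rho$ (tangential) and $b=-v$, the elementary inequality
\[
\frac{(a+b)\otimes(a+b)}{(N-n)+1}\le\frac{a\otimes a}{N-n}+b\otimes b
\]
is a Cauchy--Schwarz step. It gives $D^2\psi-\frac{1}{N-n+1}D\psi\otimes D\psi\ge\kappa$. The case $N=\infty$ is immediate because $-\log\ell$ is convex. The case $N=n$ forces $\rho$ to be affine along the tangent directions, and the same computation applies. Boundary issues are handled by $\supp\mu_{\S}=\S\cap X$ from Theorem~\ref{thm:A}. In dimension $n\le2$, leaves have dimension $0$, $1$ or $n$. Dimension-$n$ leaves carry the restricted measure, zero-dimensional ones are trivial, and one-dimensional leaves have codimension one when $n=2$. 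This gives the second assertion.

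For the counterexamples I would give explicit constructions. In $\R^3$, I would take a firmly nonexpansive map whose leaves are segments lying on a family of lines forming a ruled, hyperboloid-like (twisted) family. Then the transverse spacing is a quadratic, not affine, function along each segment. For instance, consider lines through points of a plane with directions rotating linearly, realised as the resolvent of a maximal monotone operator built from a skew-rotating family, or as a projection onto a convex family. I would compute $g_{\S}$ explicitly as the Jacobian of the parametrisation $(s,r)\mapsto$ point. It comes out as $g(r)=a+r^2$ or $g(r)=1-r^2+\dots$ with a strictly convex factor, so $\log g$ is not concave. This means the Lebesgue conditional is not log-concave, hence fails $\CD(0,\infty)$ and therefore every $\CD(0,N')$. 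The four-dimensional gradient example is a product or twisted version with two-dimensional leaves, namely contact sets of a convex $C^{1,1}$ function with $\norm{D^2f}\le1$. Its determinant-type Jacobian along each planar leaf is a non-log-concave quadratic. The main obstacle is verifying global firm nonexpansiveness (respectively the $1$-Lipschitz gradient) of these explicit maps while keeping the leaves exactly the designed segments or planes. I would first build the monotone operator via its graph and take the resolvent, so that firm nonexpansiveness holds automatically and Lemma~\ref{lem:5.2} identifies the leaves.
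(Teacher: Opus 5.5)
Your positive half is essentially the paper's argument (Theorem~\ref{thm:6.2} and Corollary~\ref{cor:6.3}). Disjointness of the affine sections makes the transverse gap affine, and differentiating in the anchor gives an affine geometric density. Your Cauchy--Schwarz step is equivalent to the identity $\Ric_{\mu_{\S},N}(v,v)=\Ric_{\mu,N}(v,v)+(\alpha+s\beta)^2/(s(s+1))$. To make the limit rigorous, the paper uses the fact that disjointness gives $\norm{\ell_b-\ell_a}\le r^{-1}|b-a|$ for the section slopes. It then extends $a\mapsto\ell_a$ to a Lipschitz map and applies the area formula at density points of the compact anchor set. One slip concerns $N=n$. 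There the ambient condition forces $\rho$ to be \emph{constant} and $\kappa\le0$, not merely tangentially affine, and then $\Ric_{\mu_{\S},n}=0\ge\kappa$. Your computation divides by $N-n$, and with nonconstant affine $\rho$ the bound can fail.

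The genuine gap is the second half. The statement asserts that two maps exist, and you neither construct them nor verify their properties. Worse, the concrete mechanisms you suggest cannot work.

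In $\R^4$ you propose contact sets of a convex $f$ with $\norm{D^2f}\le1$. Contact sets are fibres of $Df$, on which $Df$ is constant, so they are not the isometric leaves of $u=Df$. Moreover, for such convex $f$, $Df$ is firmly nonexpansive (Baillon--Haddad). Lemma~\ref{lem:5.2} then identifies its leaves with the fibres of $D(\tfrac12\norm{\cdot}^2-f)$, the gradient of a convex function. The same happens for your alternative in $\R^3$, a metric projection $P_K=(\Id+N_K)^{-1}$, whose leaves are fibres of $D(\tfrac12\dist(\cdot,K)^2)$. In every such reading you land among convex-gradient fibres, where the paper stresses (via the companion work cited in the introduction) that curvature \emph{is} inherited. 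Compare Remark~\ref{rem:7.2}: without the skew term the density is proportional to $s^2$, which satisfies $\CD(0,3)$. Your other candidate shape, $1-r^2$, is log-concave and also gives nothing.

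What is needed is a non-gradient skew part in $\R^3$ and a nonconvex potential in $\R^4$.

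\emph{The $\R^3$ example.} The paper takes $Bp=e_3\times p$ and proves that $B+N_C$ is maximal monotone for the unit ball $C$. It inverts $a\Id+B+N_C$ to obtain a Lipschitz monotone $T_a$ whose fibres over $\norm p=1$ are the rays $\{Bp+sp\mid s\ge a\}$. It then sets $u=(\Id+\tau T_a)^{-1}$, and Lemma~\ref{lem:5.2} gives the leaves $\{Bp+sp\mid s\ge c\}$. The Jacobian $s^2+\ip{e_3}{p}^2$ is log-convex where $s^2<\ip{e_3}{p}^2$, and the ball is placed in that region.

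\emph{The $\R^4$ example.} The potential is $Q$ plus the saddle value of $\Psi_\xi$, and its tangential Hessian has eigenvalues $+1$ and $-1$. The Jacobian $rs+4$ is affine in $r$ and in $s$ separately but is not log-concave along $v_++v_-$. Globalising requires a two-point $\mathcal C^{1,1}$ estimate on a small ball together with Le Gruyer's extension theorem. One must then show that the extension does not enlarge the leaves inside $X$; the paper does this using $\norm{H}\le1$ and $\ker(\Id-H^2)=\operatorname{span}\{v_+,v_-\}$.
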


Theorem~\ref{thm:6.2} establishes affinity of the geometric density
without a smoothness assumption on the ambient density.
Corollaries~\ref{cor:6.3} and~\ref{cor:6.4} deduce
curvature-dimension inheritance under the $\mathcal C^2$
assumption above. Both counterexamples have codimension two,
so the ambient dimension threshold is sharp.

The regularity assumption on the ambient density is imposed
only to simplify the exposition. The inheritance conclusions
of Theorem~\ref{thm:C} and Corollaries~\ref{cor:6.3}
and~\ref{cor:6.4} remain valid for finite measures
$d\mu=h\dd\lambda|_X$ with $0<h<\infty$ almost everywhere,
provided that $(X,\norm{\cdot},\mu)$ satisfies $\CD(\kappa,N)$
and the remaining hypotheses are unchanged.
Restriction to closed balls compactly contained in $\Omega$
and \cite[Theorem~3.7]{r22} give a locally Lipschitz
representative of $-\log h$.
The local arguments in
\cite[Theorems~6.11 and~6.20]{MondinoRyborz}, with tests
compactly supported in $\Omega$, give the distributional
Bakry--\'Emery bound when $N>n$. Local convolution and Jensen's inequality
preserve its curvature and dimension parameters; stability
on compact convex leaf sections and exhaustion then give
the asserted inheritance. For $N=n$, one uses the local
density-rigidity argument of \cite[Theorem~3.6]{r22}.
For clarity, we state and prove the inheritance results below
under the $\mathcal C^2$ assumption and omit the details of
this approximation.

The first construction makes the curvature obstruction explicit. In a
unit-speed coordinate along a leaf, the conditional density is
proportional to $s^2+a^2$, where $a>0$ depends on the leaf and the chosen
ball lies in the region $0<s<a$. Hence
\begin{equation*}
    \frac{d^2}{ds^2}\log(s^2+a^2)
 =\frac{2(a^2-s^2)}{(s^2+a^2)^2}>0. 
\end{equation*}
Thus even log-concavity fails, excluding $\CD(0,\infty)$ and all the
finite-dimensional conditions in Theorem~\ref{thm:C}; see
Theorem~\ref{thm:7.1}. The map is vector-valued despite the
one-dimensionality of its leaves, so the example does not contradict
scalar curvature inheritance.

The three-dimensional construction also gives
curvature-dimension failure for fibres of a firmly
nonexpansive map; see Corollary~\ref{cor:monotone-cd-failure}.
Thus the measure-contraction theorem for maximal monotone
relations cannot in general be strengthened to curvature-dimension inheritance.

The gradient counterexample in Theorem~\ref{thm:7.5} exhibits a different
mechanism. Its two-dimensional leaves have orthogonal directions on which
the gradient acts respectively as a translation of the identity and a
reflection. The conditional density can be log-concave separately in
these directions while failing joint log-concavity. The potential is
nonconvex, and the decomposition is into isometric leaves of its gradient,
not fibres of the gradient of a convex function. This distinction is
essential: the two decompositions have different curvature behaviour  \cite{r19}.

The obstruction cannot be removed by allowing a fixed loss in curvature.
Corollary~\ref{cor:7.7} gives, for every $L>0$, rescaled examples on
arbitrarily small balls whose conditionals fail $\CD(-L,N')$ for every
admissible $N'$, including $N'=\infty$. The ambient measure may instead
be fixed as normalised Lebesgue measure on the unit ball or the standard
Gaussian; failure then occurs on a family of leaves of positive quotient
measure. Thus no lower curvature bound depending only on the ambient
curvature and dimension can hold in general. The map may depend on the
prescribed loss $L$; the statement does not assert unbounded curvature
loss for one fixed map.

\subsection{Relation to previous work and localisation}

The disintegration problem originates in Sudakov's approach to the Monge
problem \cite{r33}. Absolute continuity need not pass to a general
partition into segments; see Larman \cite{r25} and the Nikodym set construction
in \cite[Section~2]{r2}. The special geometry of the partition must
therefore enter the proof. Transverse comparison and finite-cone
approximation were developed by Bianchini--Gloyer, Caravenna,
Caravenna--Daneri and Bianchini--Daneri \cite{r7,r10,r11,r6}. The proof of
our transverse estimate follows the approximation method for convex
faces, with the equality defect providing the required separation for
arbitrary nonexpansive maps.

The quantitative results of Caravenna--Daneri go beyond equivalence of
measure classes. Proposition~4.17 and equations~(4.70)--(4.72) in \cite{r11} give local Lipschitz regularity of the
geometric density and endpoint-distance ratio estimates with exponent
$n-k$. After localisation and the choice of continuous representatives,
these estimates imply \eqref{eq:1.3} for convex-gradient fibres.
Multiplication by the ambient density distortion then yields
$\MCP(\kappa,N)$ inheritance in that setting. This is a consequence of
their quantitative theorem, rather than an MCP statement formulated
there. The new geometric scope of the present paper consists of
arbitrary nonexpansive maps on all leaf strata and, through the resolvent
limit, general maximal monotone relations.

For scalar localisation, curvature-dimension inheritance
was established by Klartag \cite{r24}; related synthetic
localisation theorems appear in \cite{r13,r14}.
For vector-valued maps, inheritance on leaves whose
dimension equals the target dimension was proved in
\cite[Theorem~6.2]{r16}. The other dimensional strata
are not covered by that result. In particular,
one-dimensional leaves of a vector-valued map need not
behave like transport rays of a scalar map.

For the measure contraction property,
Cavalletti--Mondino \cite[Theorem~3.6]{r15} prove
$\MCP(\kappa,N)$ inheritance along the transport rays of
any scalar $1$-Lipschitz function on an essentially
nonbranching $\MCP(\kappa,N)$ space. The nonbranching
predecessor is due to Bianchini--Cavalletti
\cite[Theorem~9.5]{r5}. Theorem~\ref{thm:A} extends the
Euclidean conclusion to arbitrary vector-valued maps,
without a restriction on leaf dimension.
Theorem~\ref{thm:C} identifies a further positive case
for curvature-dimension inheritance and shows that
such inheritance fails in general.

For convex-gradient fibres, the measure-contraction conclusion of
Theorem~\ref{thm:B} admits a stronger curvature counterpart. The companion
work \cite{r19} gives polynomial geometric densities with a concave
codimension root and deduces $\CD(\kappa,N)$ inheritance. Its argument
uses cyclic monotonicity and mixed Monge--Amp\`ere measures. No assertion
or proof in the present paper depends on it. Here Corollary \ref{cor:5.7} proves $\MCP(\kappa,N)$ inheritance, and Corollary~\ref{cor:5.9} recovers the measure-class statement on convex
graph faces. 

For localisation applications, the inheritance theorem supplies
geometric control of a given disintegration. Conditional balance remains
a separate requirement: the identities $\int f_i\dd\mu=0$, $i=1,2,\dotsc,m$, do not alone
imply $\int f_i\dd\mu_{\S}=0$ for $i=1,2,\dotsc,m$ on almost every leaf of an optimal $1$-Lipschitz $u$; see \cite[Theorem~5]{r17}, where Klartag's conditional mass-balance conjecture was disproved.

Section~\ref{sec:geometry} recalls leaf geometry and the Euclidean form
of the measure contraction property. Section~\ref{sec:disintegration}
constructs the geometric densities, and Section~\ref{sec:contraction}
proves measure-contraction inheritance and sharpness of the dimension
parameter. Section~\ref{sec:monotone} treats maximal monotone relations
and convex fibres. Section~\ref{sec:curvature} establishes curvature
inheritance in codimension one, and Section~\ref{sec:failure} gives the
counterexamples and the rescaling argument.

\section{Leaf geometry and measure contraction property}\label{sec:geometry}
We follow the notation of \cite{r16}. The Euclidean norm and scalar product are denoted by $\norm{\cdot}$ and $\ip{\cdot}{\cdot}$. For a real-valued function, we identify its differential with its Euclidean gradient. On an affine subspace we make the corresponding intrinsic identification. We write $\lambda$ for Lebesgue measure on $\R^n$, and $\lambda^d$ when the dimension needs to be specified. Hausdorff measure $\Hh^k$ is normalised to agree with Lebesgue measure on a $k$-dimensional affine subspace.

For a convex set $C$, the symbols $\ri C$, $\cl C$ and $\partial C$ denote its relative interior, relative closure and relative boundary. We write $\Conv A$ and $\Aff A$ for the convex and affine hulls of a set $A$. Every affine subspace carries its Euclidean metric. The ball $B(x,r)\subset\mathbb{R}^n$ is open unless stated otherwise, and $\omega_n=\lambda(B(0,1))$. All conditional measures in this article are normalised to have mass one; thus the quotient measure has the same total mass as the original measure.

\subsection{The leaf partition}
Let $u:\R^n\to\R^m$ be $1$-Lipschitz. A \emph{leaf} of $u$ is a
maximal nonempty subset of $\R^n$ on which $u$ preserves distances.
Every leaf $\S$ is closed and convex, and $u|_{\S}$ is affine
\cite[Corollary~2.5]{r16}. If
\begin{equation*}
     V_{\S}=\Aff\S-\Aff\S
\end{equation*}
is its tangent space, there is a unique linear isometry
$T_{\S}:V_{\S}\to\R^m$ such that
\begin{equation}\label{eq:2.1}
 u(x)-u(y)=T_{\S}(x-y)\text{ for all }x,y\in\S.
\end{equation}
We denote the orthogonal projections onto $V_{\S}$ and
$T_{\S}(V_{\S})$ by $P_{\S}$ and $Q_{\S}$, respectively, and write
$P_V$ for the orthogonal projection onto any specified linear subspace
$V$.

Distinct leaves meet only in their relative boundaries, and every
point at which $u$ is differentiable belongs to a unique leaf
\cite[Lemmas~2.11 and~2.12]{r16}. Moreover, whenever two distinct
leaves intersect, their intersection is a face of each
\cite[Lemma~2.2]{r18}.

For completeness, the convexity and affinity assertions follow from
the variance identity. Let $x_1,\ldots,x_\ell$ belong to an isometry
set, let $t_i\ge0$ with $\sum_i t_i=1$, and put
\begin{equation*}
 \bar x=\sum_i t_i x_i,\quad
 \bar u=\sum_i t_i u(x_i).
\end{equation*}
Since the pairwise distances between the $x_i$ are preserved,
the variance identity gives
 \begin{equation*}
      \norm{u(\bar x)-\bar u}^2=\sum_i t_i\norm{u(\bar x)-u(x_i)}^2-\sum_i t_i\norm{\bar x-x_i}^2\leq 0.
 \end{equation*}
Thus $u(\bar x)=\bar u$. Polarisation shows that this affine extension
preserves distances on the convex hull. Continuity then extends the
isometry to its closure, so maximality implies that every leaf is
closed and convex.

Let $N(u)$ denote the Borel set of points at which $u$ is not
differentiable. Rademacher's theorem gives $\lambda(N(u))=0$.
Define
\begin{equation*}
 \S(x)=
 \begin{cases}
  \text{the unique leaf containing }x,&x\notin N(u),\\
  \{x\},&x\in N(u).
 \end{cases}
\end{equation*}
This is a Borel map into the Polish space $\CC(\R^n)$ of nonempty
closed convex subsets of $\R^n$, equipped with the Wijsman topology
\cite[Theorem~5.1 and its proof]{r16}. The singleton values on $N(u)$
are only a convention for defining the map everywhere; they need not
be leaves.

For every finite Borel measure $\mu\ll\lambda$, the disintegration
theorem on standard Borel spaces gives an essentially unique Borel
family of conditional probability measures satisfying
\begin{equation}\label{eq:2.2}
 \nu=\S_\#\mu,\quad
 \mu=\int_{\CC(\R^n)}\mu_{\S}\dd\nu(\S),\quad
 \mu_{\S}(\S)=1,
\end{equation}
where the last identity holds for $\nu$-almost every $\S$
\cite[Theorem~10.4.8 and Corollary~10.4.10]{r8}.
Since $\mu(N(u))=0$, the quotient measure is concentrated on genuine
leaves. Changing the values of the leaf map on $N(u)$ leaves the
quotient measure unchanged and, up to $\nu$-negligible sets, does not
affect the conditional measures.

We shall repeatedly use the nonnegative function
\begin{equation}\label{eq:2.3}
 c_u(x,y)=\norm{x-y}^2-\norm{u(x)-u(y)}^2\ge0,
 \quad x,y\in\R^n.
\end{equation}
Let $C$ be a nonempty convex set on which $u$ is an isometry, and fix
$y\in\R^n$. The function $z\mapsto c_u(z,y)$ is affine on $C$:
the quadratic terms cancel as $u|_C$ is an affine isometry.
Being nonnegative, this function vanishes identically on $C$ whenever
it vanishes at a point of $\ri C$. Consequently,
\begin{equation*}
     x\in\ri C\text{ and } c_u(x,y)=0\text{ imply that } u\text{ is an isometry on }C\cup\{y\}.
\end{equation*}
In particular, maximality gives
\begin{equation*}
     \S=\{y\in\R^n\mid c_u(x,y)=0\}\text{ with }x\in\ri\S.
\end{equation*}
The same identity holds whenever $x$ belongs to a unique leaf $\S$,
even if $x\notin\ri\S$: any two-point isometry set $\{x,y\}$ is
contained in a maximal isometry set, which must then be $\S$.

For $0\le k\le\min\{n,m\}$, let
\begin{equation*}
     \T^k=\{\S\in CC(\mathbb{R}^n)\mid\S\text{ is a leaf of }u,\ \dim\S=k\},
 \quad T_k=\bigcup_{\S\in\T^k}\S.
\end{equation*}
The unions of relative interiors and of relative boundaries of the
leaves in each fixed dimension are Borel
\cite[Corollary~4.7]{r16}; in particular, each $T_k$ is Borel.
The family $\T^k$ is itself a Borel subset of $\CC(\R^n)$.
Indeed, enumerate $\mathbb Q^n$ as $(r_i)_{i\ge1}$ and let $p_i(C)$ be the
metric projection of $r_i$ onto $C\in\CC(\R^n)$. Each map $p_i$ is
continuous for the Wijsman topology, and the points $p_i(C)$ are
dense in $C$. The dimension of $C$ can therefore be determined by
testing affine independence among these points, while isometricity
on $C$ is equivalent to preservation of their pairwise distances.
It follows that
\begin{equation*}
     \mathcal I_k=
 \bigl\{C\in\CC(\R^n)\mid\dim C=k,
 c_u(p_i(C),p_j(C))=0\text{ for all }i,j\ge1\bigr\}
\end{equation*}
is Borel.

For $C\in\mathcal I_k$, choose the first affinely independent
$(k+1)$-tuple from this dense family, using a fixed enumeration of
all such tuples, and denote its barycentre by $b(C)$. This is a
Borel choice, and $b(C)\in\ri C$. The equality-defect argument above
shows that $C$ is a leaf if and only if
\begin{equation*}
    \{y\in\R^n\mid c_u(b(C),y)=0\}\subset C.
\end{equation*}
For $j\ge1$, define the Borel set
\begin{equation*}
     E_j=\bigl\{(C,y)\in\mathcal I_k\times\R^n\mid 
 \norm{y}\le j,\ \dist(y,C)\ge j^{-1},
 c_u(b(C),y)=0\bigr\}.
\end{equation*}
Every section $(E_j)_C$ is compact. The theorem on projections of
Borel sets with $\sigma$-compact sections therefore implies that the projection $\pi(E_j)$ of
$E_j$ onto the first factor is Borel \cite[Theorem~18.18]{r23}. Failure of the preceding
inclusion is equivalent to membership in one of these projections.
Hence
\begin{equation*}
     \T^k=\mathcal I_k\setminus\bigcup_{j\ge1}\pi(E_j)
\end{equation*}
is Borel, as claimed.

\subsection{Measure contraction property}
For $a\in\R$, let
\[
s_a(r)=\begin{cases}
\sin(\sqrt a\,r)/\sqrt a,&a>0,\\
r,&a=0,\\
\sinh(\sqrt{-a}\,r)/\sqrt{-a},&a<0.
\end{cases}
\]
For $\kappa\in\R$, $N>1$ and $0<t\le1$, define
\begin{equation}\label{eq:2.4}
\beta_{\kappa,N}^{(t)}(r)
=t\left(\frac{s_{\kappa/(N-1)}(tr)}{s_{\kappa/(N-1)}(r)}\right)^{N-1},
\qquad \beta_{\kappa,N}^{(t)}(0)=t^N.
\end{equation}
Its domain is $0\le r<R_{\kappa,N}$, where
\[
R_{\kappa,N}=\begin{cases}
\pi\sqrt{(N-1)/\kappa},&\kappa>0,\\
+\infty,&\kappa\le0.
\end{cases}
\]
We use Ohta's normalisation of $\MCP(\kappa,N)$ \cite{r29}. For a finite measure $\eta$ with closed convex support $Y$ in a Euclidean affine space, uniqueness of geodesics identifies this condition with
\begin{equation}\label{eq:2.5}
\eta(H_{o,t}(B))\ge\int_B\beta_{\kappa,N}^{(t)}(\norm{x-o})\dd\eta(x),
\quad H_{o,t}(\cdot)=o+t(\cdot-o),
\end{equation}
for all $o\in Y$, $0<t\le1$, and Borel $B\subset Y\cap B(o,R_{\kappa,N})$. Equivalently,
\begin{equation}\label{eq:2.6}
(H_{o,t})_\#\bigl(\mathbf{1}_{B(o,R_{\kappa,N})}\beta_{\kappa,N}^{(t)}(\norm{\cdot-o})\eta\bigr)\le\eta.
\end{equation}
The equivalence follows by taking inverse images under $H_{o,t}$, which is injective, see \cite[Definition~2.1 and Lemma~2.3]{r29}. For a locally finite measure with full support, the same characterisation holds by first considering bounded source sets and then using monotone convergence. When $\kappa\le0$, the coefficient is bounded by one. When $\kappa>0$, the restriction to $B(o,R_{\kappa,N})$ is part of the condition and will be retained throughout the proofs.

\begin{lemma}[Pointwise ambient distortion]\label{lem:2.1}
Let $X\subset\R^n$ be closed and convex with nonempty interior $\Omega$, and let $d\mu=h\,d\lambda|_X$ satisfy $\MCP(\kappa,N)$, where $0<h<\infty$ almost everywhere in $\Omega$. Let $L_h\subset\Omega$ be the set of Lebesgue points of $h$ with positive finite value. Then $\lambda(\Omega\setminus L_h)=0$; we use the precise representative of $h$ on $L_h$. If $o\in X$, $0<t\le1$, $\norm{x-o}<R_{\kappa,N}$ and $x,H_{o,t}(x)\in L_h$, then
\begin{equation}\label{eq:2.7}
t^nh(H_{o,t}(x))\ge\beta_{\kappa,N}^{(t)}(\norm{x-o})h(x).
\end{equation}
\end{lemma}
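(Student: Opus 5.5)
The plan is to test the integrated inequality \eqref{eq:2.5} on small balls around $x$ and then let the radius shrink, using the Lebesgue point property at both $x$ and $H_{o,t}(x)$.

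First, since $h$ is locally integrable on $\Omega$ (as $\mu$ is finite), almost every point of $\Omega$ is a Lebesgue point. Because $0<h<\infty$ almost everywhere, the precise representative is positive and finite almost everywhere, so $\lambda(\Omega\setminus L_h)=0$.

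Now fix $o,t,x$ as in the statement. For small $r>0$ put $B_r=B(x,r)$. Since $x\in L_h\subset\Omega$, we have $B_r\subset\Omega\subset X$. Since $\norm{x-o}<R_{\kappa,N}$, we also have $B_r\subset B(o,R_{\kappa,N})$ for $r$ small. The set $H_{o,t}(B_r)=B(H_{o,t}(x),tr)$ is a ball centred at $H_{o,t}(x)\in L_h$. Applying \eqref{eq:2.5} with $B=B_r$ gives
\[
\int_{B(H_{o,t}(x),tr)}h\dd\lambda\ \ge\ \int_{B(x,r)}\beta_{\kappa,N}^{(t)}(\norm{y-o})\,h(y)\dd\lambda(y).
\]
Divide both sides by $\lambda(B(x,r))=t^{-n}\lambda(B(H_{o,t}(x),tr))$. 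The left side becomes $t^n$ times the average of $h$ over $B(H_{o,t}(x),tr)$, which tends to $t^nh(H_{o,t}(x))$ as $r\to0$.

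On the right side, $r'\mapsto\beta_{\kappa,N}^{(t)}(r')$ is continuous on $[0,R_{\kappa,N})$, including at $r'=0$, where its value $t^N$ is the limit. It is therefore bounded and uniformly close to $\beta_{\kappa,N}^{(t)}(\norm{x-o})$ on $B_r$ for $r$ small. The averages of $h$ over $B_r$ converge to $h(x)$ (in fact $L^1$-averages of $\lvert h-h(x)\rvert$ tend to zero). Hence the right-hand average tends to $\beta_{\kappa,N}^{(t)}(\norm{x-o})h(x)$, which yields \eqref{eq:2.7}.

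The only delicate points are the following.
\begin{enumerate}
\item The identification of \eqref{eq:2.5} with Ohta's definition is already recalled in the paper, so it can be used directly.
\item The point $o$ may lie on $\partial X$. This causes no problem, because only $x$ and $H_{o,t}(x)$ need to be interior Lebesgue points, and the ball $B(H_{o,t}(x),tr)$ lies inside $X$ by convexity.
\item For $\kappa>0$, the requirement $B_r\subset B(o,R_{\kappa,N})$ is exactly where the strict inequality $\norm{x-o}<R_{\kappa,N}$ is needed.
\end{enumerate}
I expect no real obstacle beyond checking the continuity of $\beta$ near $r'=0$ when $x=o$.
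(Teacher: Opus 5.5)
Your proposal is correct and follows essentially the same argument as the paper. Both apply \eqref{eq:2.5} to small balls $B(x,r)\subset\Omega\cap B(o,R_{\kappa,N})$, use that their homothetic images are $B(H_{o,t}(x),tr)$, divide by the volume, and pass to the limit using Lebesgue differentiation at $x$ and $H_{o,t}(x)$ together with continuity of the distortion coefficient.
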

\begin{proof}
For sufficiently small $\epsilon>0$, the ball $B(x,\epsilon)$ lies in $\Omega\cap B(o,R_{\kappa,N})$. Its homothetic image is $B(H_{o,t}(x),t\epsilon)$. Applying \eqref{eq:2.5} to this ball and dividing by $\omega_n\epsilon^n$, we obtain an inequality between averages of $h$. As $\epsilon\downarrow0$, Lebesgue differentiation at $x$ and $H_{o,t}(x)$, together with continuity of the coefficient, gives \eqref{eq:2.7}.
\end{proof}

\begin{lemma}[Restriction and change of density]\label{lem:2.2}
Let $\gamma=\int\gamma_q\dd\vartheta(q)$ be a normalised disintegration over a Borel map $\pi$, and let $a\ge0$ be Borel with $\int a\dd\gamma<\infty$. Put $A(q)=\int a\dd\gamma_q$. The quotient measure of $a\,d\gamma$ is $A\,d\vartheta$, and its conditional on $\{q\mid A(q)>0\}$ is $A(q)^{-1}a\, d\gamma_q$.
\end{lemma}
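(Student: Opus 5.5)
The plan is to verify both assertions of Lemma~\ref{lem:2.2} directly from the defining identity of a disintegration, namely $\int \phi\dd\gamma=\int\!\int\phi\dd\gamma_q\dd\vartheta(q)$ for nonnegative Borel $\phi$ and $\vartheta=\pi_\#\gamma$. Then we invoke essential uniqueness of normalised disintegrations on standard Borel spaces \cite[Theorem~10.4.8 and Corollary~10.4.10]{r8}.

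First, $A$ is Borel in $q$, since $q\mapsto\int a\dd\gamma_q$ is measurable for nonnegative Borel $a$. This follows by the monotone class argument that gives measurability of the disintegration. Moreover, $\int A\dd\vartheta=\int a\dd\gamma<\infty$, so $A<\infty$ for $\vartheta$-almost every $q$. For the quotient measure, take a Borel set $E$ in the base and apply the disintegration identity to $\phi=a\,\mathbf 1_{\pi^{-1}(E)}$. Since $\gamma_q$ is concentrated on $\pi^{-1}(q)$ for $\vartheta$-almost every $q$, the inner integral equals $\mathbf 1_E(q)A(q)$. Hence $\pi_\#(a\gamma)(E)=\int_E A\dd\vartheta$, which proves the first claim.

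Next, define $\eta_q=A(q)^{-1}a\,\gamma_q$ on $G=\{A>0\}$. Each $\eta_q$ with $q\in G$ and $A(q)<\infty$ is a probability measure concentrated on $\pi^{-1}(q)$, and $q\mapsto\eta_q(B)$ is Borel. For nonnegative Borel $\phi$, we compute
\[
\int\!\!\int\phi\dd\eta_q\,A(q)\dd\vartheta(q)=\int_G\!\int\phi a\dd\gamma_q\dd\vartheta(q)=\int\phi a\dd\gamma .
\]
The last equality holds because the complement of $G$ contributes zero: there $\int\phi a\dd\gamma_q$ vanishes whenever $a=0$ $\gamma_q$-almost everywhere. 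This is a normalised disintegration of $a\gamma$ over $\pi$, with quotient $A\vartheta$ concentrated on $G$. Essential uniqueness then identifies it with the conditional measures of $a\gamma$ for $A\vartheta$-almost every $q$.

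The only delicate point is bookkeeping on null sets. We must restrict to the full-measure set of $q$ where $\gamma_q$ is concentrated on the fibre and $A(q)<\infty$, and we must note that the conditional statement is meaningful only on $\{A>0\}$, which carries the whole quotient measure. I do not expect a genuine obstacle.
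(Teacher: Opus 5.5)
Your proposal is correct and follows the paper's argument: verify that the proposed kernel $A(q)^{-1}a\,\gamma_q$ on $\{A>0\}$, together with the quotient $A\,\vartheta$, reproduces $\int\phi\,a\dd\gamma$ for nonnegative Borel $\phi$ (Tonelli), then conclude by essential uniqueness of normalised disintegrations. You add the measurability of $A$, the identification $\pi_\#(a\gamma)=A\vartheta$, and the null-set handling on $\{A=0\}$ and $\{A=\infty\}$, all of which the paper leaves implicit.
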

\begin{proof}
Integrating a nonnegative Borel function against the proposed disintegration and applying Tonelli's theorem gives its integral against $a\,d\gamma$. The assertion follows from uniqueness of the normalised conditional measures.
\end{proof}

\section{Geometric disintegration}\label{sec:disintegration}
We assume throughout this section that $u:\R^n\to\R^m$ is $1$-Lipschitz. Let us recall that $\S$ denotes the Borel leaf map and $\T^k$ the family of leaves of dimension $k$. If $x\notin N(u)$, then $\S(x)$ is the unique leaf containing $x$. We denote the tangent space of a leaf $\S$ by $V_{\S}$. Let $w$ be a smooth positive probability density on $\R^n$, and let
\begin{equation*}
d\sigma=w\,d\lambda,\quad\nu_0=\S_\#\sigma,\qquad
\sigma=\int\sigma_{\S}\dd\nu_0(\S).
\end{equation*}
All conditional measures are normalised to have mass one. The density $w$ plays an auxiliary role.

\begin{theorem}[Geometric disintegration]\label{thm:3.1}
For $\nu_0$-almost every leaf $\S$ of dimension $k\ge1$, there is a positive function $g_{\S}$ on $\ri\S$ such that $\log g_{\S}\in\Lip_{\mathrm{loc}}(\ri\S)$,
\begin{equation}\label{eq:3.1}
\int_{\S}wg_{\S}\dd\Hh^k=1,
\end{equation}
and
\begin{equation}\label{eq:3.2}
g_{\S}(o+t(x-o))\ge t^{n-k}g_{\S}(x)\text{ for all }o\in\S,\ x\in\ri\S,\ 0<t\le1.
\end{equation}
The functions may be chosen measurably in the leaf variable. For every finite measure $d\mu=h\,d\lambda$, with $h\ge0$ Borel, put
\begin{equation*}
   m_{\S}=\int_{\S}hg_{\S}\dd\Hh^k. 
\end{equation*}
Then $d\S_\#\mu=m_{\S}\dd\nu_0$, and the conditional measures of $\mu$ satisfy
\begin{equation}\label{eq:3.3}
d\mu_{\S}=\frac{hg_{\S}}{m_{\S}}\dd\Hh^k|_\S
\quad\text{for }m_{\S}\nu_0\text{-almost every }\S.
\end{equation}
Here $0<m_{\S}<\infty$ for $m_{\S}\nu_0$-almost every leaf. On a zero-dimensional leaf $\S=\{x\}$, the same statements hold with $g_{\S}(x)=1/w(x)$ and $\Hh^0|_\S$. The values of $g_{\S}$ on relative boundaries are negligible. A different choice of $w$ multiplies $g_{\S}$ by a positive constant depending only on $\S$.
\end{theorem}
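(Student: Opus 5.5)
The proof will fix the dimension $k\ge1$ and work on the Borel stratum $\T^k$. It rests on a transverse volume comparison, set up as follows. Localise to a small piece: pick a $k$-dimensional linear subspace $V$ and an $(n-k)$-dimensional complement $W=V^\perp$, and consider leaves whose tangent spaces are close to $V$ and whose relative interiors cross two parallel affine slices $a+W$ and $b+W$, with $b-a\in V$. Each such leaf meets each slice in exactly one point, which gives a correspondence $\Phi$ between the two transverse slices. Countably many such pieces, with rational data, cover $\bigcup_{\S\in\T^k}\ri\S$.

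\textbf{Step 1: the comparison estimate.} Let $A$ be a set of crossing points in the slice $a+W$, and let $A_t$ be the set of points at fraction $t$ along the chords from $o$-side points to $A$. More precisely, take the homothety within each leaf towards points on a third slice. The key estimate I aim for is
\begin{equation*}
\Hh^{n-k}(A_t)\ge t^{n-k}\,\Hh^{n-k}(A)\quad\text{(up to an angle factor tending to 1)}.
\end{equation*}
To prove it, approximate the leaf correspondence by finitely many targets. Choose finitely many points $y_1,\dots,y_J$ on the far slice. For each $x$ in the near slice, assign the $y_j$ minimising $c_u(x,y_j)$ (plus a small tie-breaking term). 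Two facts should then hold:
\begin{enumerate}
\item The segments $[x,y_{j(x)}]$ are pairwise disjoint in their interiors. I expect this from a cyclical-monotonicity-type argument for the cost $c_u$, together with the fact that $\norm{u(x)-u(y)}^2\le\norm{x-y}^2$ behaves like a $1$-Lipschitz transport cost. Alternatively, it follows from the affinity of $z\mapsto c_u(z,y)$ on isometry sets.
\item The resulting cone map sends each cell to a piece of a cone with vertex $y_j$. Its Jacobian on the transverse slice at time $t$ is exactly $t^{n-k}$ in the codimension directions, by elementary homothety of a cone over an $(n-k)$-dimensional base.
\end{enumerate}
As $J\to\infty$, the finite-target assignments converge to the leaf correspondence, because vanishing of $c_u$ at a relative interior point identifies the leaf. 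Disjointness then passes the volume inequality to the limit by upper semicontinuity of measure on compact sets. I expect this step, namely disjointness of the interpolating segments for a general vector-valued $u$ together with the limit passage, to be the main obstacle. I will follow the Bianchini–Gloyer / Caravenna–Daneri scheme and check disjointness first.

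\textbf{Step 2: consequences of the comparison.} Applied with $t\to0$ towards boundary points, the estimate shows that the union of relative boundaries of $k$-leaves is Lebesgue-null. Applied in both directions, it shows that $\Phi$ and its inverse map $\Hh^{n-k}$-null sets to null sets, so the slice measures are mutually absolutely continuous. Fubini in the coordinates $V\times W$ then writes $w\lambda$ on the piece as $\int (\text{density})\,\Hh^k|_\S\,d(\text{slice measure})$. The Radon–Nikodym derivatives of the slice measures along each leaf define $g_\S$ up to a leaf constant, which I fix by the normalisation \eqref{eq:3.1}. Formula \eqref{eq:3.3} for general $h$ then follows from Lemma~\ref{lem:2.2}, applied with $a=h/w$, together with uniqueness of disintegration. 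The measurable dependence on $\S$ comes from choosing Radon–Nikodym derivatives by martingale limits over rational partitions. Replacing $w$ by another density only changes the normalising constant, and the zero-dimensional case is immediate.

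\textbf{Step 3: regularity and the contraction inequality.} The infinitesimal form of the comparison gives two-sided ratio bounds for $g_\S(x')/g_\S(x)$ in terms of distances to chord endpoints. These yield local Lipschitz bounds for $\log g_\S$ on $\ri\S$ for a.e.\ leaf, so I pass to the continuous positive representative. The density ratio $g_\S(H_{o,t}x)/g_\S(x)$ equals the ratio of transverse slice volumes at time $t$ versus time $1$. Step 1 therefore gives \eqref{eq:3.2} for $o$ and $x$ in a countable dense set and rational $t$, and continuity extends it to all $o\in\S$, $x\in\ri\S$ and $0<t\le1$, using limits from interior points when $o\in\partial\S$.
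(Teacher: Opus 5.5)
Your outline has the same architecture as the paper's proof. It runs the transverse comparison by finite-target minimisation of $c_u$ (Lemma~\ref{lem:3.2}), then negligibility of relative boundaries (Lemma~\ref{lem:3.3}), then equivalence of the transverse slice measures with a continuous chart density obeying the homothety inequality (Lemma~\ref{lem:3.5}), and finally Lemma~\ref{lem:2.2} for general $h$.

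In Step~1 no angle factor arises. Sources, centres and images lie in parallel slices, so each cell is moved by an exact homothety and the factor is exactly $t^{n-k}$; this is what gives the sharp exponent in \eqref{eq:3.2}.

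The disjointness you flag comes from the two-point minimality inequality $c_u(a,b)+c_u(a',b')\le c_u(a,b')+c_u(a',b)$. If the open segments $(a,b)$ and $(a',b')$ with $b\ne b'$ meet, they meet at a common level, so $a-a'=-r(b-b')$ with $r>0$. Expanding, with Cauchy--Schwarz and the $1$-Lipschitz bound, then forces $\norm{u(b)-u(b')}=\norm{b-b'}$. The contradiction needs $c_u(b,b')>0$ for distinct targets, which holds because they are relative-interior points of distinct leaves. Your alternative via affinity of $c_u(\cdot,y)$ does not apply, since the interpolating segments do not lie in leaves.

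For the limit, test the pushforward domination against continuous functions; this needs only pointwise convergence of the assignments. Upper semicontinuity on compact images instead needs uniform convergence, or a preliminary Lusin reduction, when the correspondence is merely Borel, as in the boundary step.

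The step that does not work as stated is the negligibility of the union $\partial T_k$ of relative boundaries. Read literally (centres at boundary points, ratio $t\to0$), the estimate gives only the degenerate bound $t^{n-k}\Hh^{n-k}(A)$ for a source of interior points. It says nothing about the boundary set.

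The boundary points must be the source, interior crossing points the targets, and the displacement small. Take a compact $K\subset\partial T_k\setminus N(u)$ of positive measure in one localised family. Let $b(x)$ be the point where $\S(x)$ crosses a reference slice $q_*+W$, and put $F_t(x)=(1-t)x+tb(x)$.

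Removing $N(u)$ is needed. Your convergence argument uses that $c_u(x,\cdot)=0$ identifies the leaf of $x$. This holds at points on a unique leaf, but fails at boundary points shared by several leaves.

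Since $K$ does not lie in one slice, slice it by $q+W$ (no point of $K$ lies on $q_*+W$). Apply the comparison in each $(n-k+1)$-plane $q+\R(q_*-q)+W$ and integrate in $q$. The change of variables $q\mapsto(1-t)q+tq_*$ contributes $(1-t)^{-k}$, so $(F_t)_\#(\lambda|_K)\le(1-t)^{-n}\lambda$.

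The contradiction uses two facts. First, $F_t(K)\cap K=\emptyset$, because $F_t(x)\in\ri\S(x)$ while points of $K$ are relative boundary points of their unique leaves. Second, $F_t(K)\subset\{z:\dist(z,K)\le Mt\}$. Hence $(1-t)^n\lambda(K)\le\lambda(\{z:\dist(z,K)\le Mt\}\setminus K)\to0$.

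This step is indispensable. Without it the conditionals could charge the $\Hh^k$-null relative boundaries, and \eqref{eq:3.3} would fail.
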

The exponent in \eqref{eq:3.2} is the codimension of the leaf. We shall prove the theorem without any curvature assumption. We first establish a comparison of transverse slices. We then use this comparison to show that the relative boundaries have Lebesgue measure zero and to construct the conditional densities.

\subsection{Comparison of transverse slices}
Recall the equality defect $c_u$ from \eqref{eq:2.3}. If $x$ belongs to a unique leaf $\S$, or if $x\in\ri\S$, then $c_u(x,y)=0$ implies $y\in\S$. The next lemma converts this qualitative separation of leaves into a measure estimate. Its exponent is the dimension of the transverse slice. The approximation follows the method for convex faces in \cite[Lemmas~4.7 and~4.14]{r11}; here the equality defect supplies the separation for arbitrary nonexpansive maps.

\begin{lemma}[Transverse comparison]\label{lem:3.2}
Let $M$ be a $(d+1)$-dimensional affine subspace, let $\ell:M\to\R$ be affine with unit linear part, and let $A\subset\ell^{-1}(0)$ be bounded and Borel. Suppose that $\tau:A\to \ell^{-1}(H)$ is bounded and Borel, where $H>0$, and that
\begin{enumerate}
\item $c_u(x,\tau(x))=0$ for every $x\in A$;
\item if $y\in\cl \tau(A)$ and $c_u(x,y)=0$, then $y=\tau(x)$;
\item $c_u(y,y')>0$ for distinct $y,y'\in\tau(A)$.
\end{enumerate}
For $0\le s<1$, write $\tau_s=(1-s)\Id+s\tau$. Then
\begin{equation}\label{eq:3.4}
(\tau_s)_\#(\Hh^d|_ A)\le(1-s)^{-d}\Hh^d|_{\ell^{-1}(sH)}.
\end{equation}
\end{lemma}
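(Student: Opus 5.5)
We prove \eqref{eq:3.4} by discretising the target $\tau(A)$ and using finite-target minimisation of the defect $c_u$. The discrete maps produce interpolating segments that do not cross. For fixed $s$, the problem is then a comparison between the $d$-dimensional measure of a set $E\subset A$ and the $d$-dimensional measure of $\tau_s(E)$ in the slice $\ell^{-1}(sH)$. The factor $(1-s)^{-d}$ is exactly the Jacobian one would obtain if the segments $[x,\tau(x)]$ all converged to one point at height $H$. The estimate says that non-crossing segments cannot concentrate the slice more than a cone does.

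\emph{Key geometric step.} Take finitely many points $y_1,\dots,y_J\in\tau(A)$. Partition $A$ by assigning each $x$ to a minimiser $j(x)$ of $c_u(x,y_j)$, with ties broken Borel-measurably. For points $x,x'$ assigned to $y_j\ne y_{j'}$, I claim that the segments $[x,y_j]$ and $[x',y_{j'}]$ are disjoint at every intermediate height $sH$.

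Suppose that $(1-s)x+sy_j=(1-s)x'+sy_{j'}=z$. The function $c_u(z,\cdot)$ can be expanded using $\norm{a-b}^2$ identities, together with the $1$-Lipschitz property of $u$ applied to the four points. This gives the inequality
\[
c_u(x,y_{j'})+c_u(x',y_j)\le c_u(x,y_j)+c_u(x',y_{j'})-\text{(nonnegative cross term)}.
\]
It is the standard ``$c$-cyclical monotonicity implies no crossing'' argument for the cost $\norm{x-y}^2-\norm{u(x)-u(y)}^2$. Equality then forces $c_u(y_j,y_{j'})=0$, which contradicts hypothesis (iii). So the sets $\tau^J_s(A_j)$ are disjoint, where $A_j=\{x\mid j(x)=j\}$ and $\tau^J_s=(1-s)\Id+s y_{j}$ on $A_j$.

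Each piece $\tau^J_s|_{A_j}$ is a homothety of ratio $(1-s)$ centred at $y_j$. Its image therefore has $\Hh^d$-measure $(1-s)^d\Hh^d(A_j)$. Disjointness gives, for any Borel $E\subset A$,
\[
(\tau^J_s)_\#(\Hh^d|_E)\le(1-s)^{-d}\Hh^d|_{\ell^{-1}(sH)}
\]
as measures. The pieces are injective with constant Jacobian $(1-s)^d$, so their pushforwards have density $(1-s)^{-d}$ on disjoint images.

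\emph{Passage to the limit.} Let the finite sets $\{y_j\}$ become dense in $\tau(A)$, which is bounded. For each fixed $x$, the minimal value of $c_u(x,y_j)$ tends to $0$, since $c_u(x,\tau(x))=0$ and $c_u$ is continuous. Any accumulation point $y$ of the chosen $y_{j(x)}$ lies in $\cl\tau(A)$ and satisfies $c_u(x,y)=0$. Hypothesis (ii) then gives $y=\tau(x)$. Hence $\tau^J\to\tau$ pointwise on $A$, and so $\tau^J_s\to\tau_s$ pointwise.

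Boundedness and dominated convergence give weak convergence of the pushforwards $(\tau^J_s)_\#(\Hh^d|_A)$ to $(\tau_s)_\#(\Hh^d|_A)$. The inequality against the fixed measure $(1-s)^{-d}\Hh^d|_{\ell^{-1}(sH)}$ is preserved under weak limits when tested on nonnegative continuous functions. This yields \eqref{eq:3.4}, with the step from continuous test functions to Borel sets supplied by regularity of the measures.

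\emph{Main obstacle.} The delicate step is the non-crossing inequality for the cost $c_u$. Unlike the convex-face setting of \cite{r11}, $c_u$ is not of the form $\norm{x-y}^2$ minus a bilinear term. One must check that the four-point expansion really yields a strict contradiction, using only the $1$-Lipschitz property and hypothesis (iii). Ties and measurability of the assignment $j(x)$ are routine by comparison.
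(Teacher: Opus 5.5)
Your proposal is correct and follows essentially the same route as the paper. Both arguments take a finite-target minimisation of $c_u$ over nets in $\tau(A)$ and use the four-point no-crossing argument: a crossing at height $sH$ gives $x-x'=-r(y_j-y_{j'})$ with $r>0$, hence $r\norm{y_j-y_{j'}}^2\le-\ip{u(x)-u(x')}{u(y_j)-u(y_{j'})}\le r\norm{y_j-y_{j'}}\,\norm{u(y_j)-u(y_{j'})}$, contradicting (iii). Both then use homotheties of ratio $1-s$ on disjoint cells, and pointwise convergence via (ii) followed by a weak limit against continuous test functions (no expansion of $c_u(z,\cdot)$ is actually needed, only the four-point identity).
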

\begin{proof}
For each $j\in\mathbb{N}$ let $Y_j\subset\tau(A)$ be a finite $\delta_j$-net, where $\delta_j\downarrow0$. Using a fixed ordering of $Y_j$, choose
\begin{equation*}
\tau_j(x)\in\argmin\{c_u(x,y)\mid y\in Y_j\}
\end{equation*}
for each $x\in A$. The maps $\tau_j$ are Borel. By uniform continuity of $c_u$ on a compact set containing all the pairs under consideration, and the assumption that $c_u(x,\tau(x))=0$ we get that $\lim_{j\to\infty}c_u(x,\tau_j(x))=0$ for every $x\in A$. Every cluster point of $\tau_j(x)$ belongs to $\cl\tau(A)$; the second assumption therefore implies $\lim_{j\to\infty}\tau_j(x)=\tau(x)$.

If $(a,b)$ and $(a',b')$ belong to the graph of $\tau_j$, then
\begin{equation*}
c_u(a,b)+c_u(a',b')\le c_u(a,b')+c_u(a',b).
\end{equation*}
If the corresponding segments $[a,b], [a',b']$ intersect at an interior point of both of the segments and $b\ne b'$, then $a-a'=-r(b-b')$ for some $r>0$. Expanding the preceding inequality and using the Lipschitz bound on $u$, we obtain
\begin{equation*} 
r\norm{b-b'}^2\le-\ip{u(a)-u(a')}{u(b)-u(b')}
\le\norm{a-a'}\,\norm{b-b'}=r\norm{b-b'}^2.
\end{equation*}
Equality implies $\norm{u(b)-u(b')}=\norm{b-b'}$, contrary to the third assumption. Distinct inverse images of $\tau_j$ are disjoint at level $\ell^{-1}(0)$, and the above argument shows that the interpolating segments
associated with distinct target points do not meet at
intermediate levels. Thus the images under $(1-s)\Id+s\tau_j$ of distinct inverse images of $\tau_j$ are disjoint. On each such inverse image the map is a homothety of ratio $1-s$. A simple form of the area formula \cite{r21} yields \eqref{eq:3.4} with $\tau_j$ in place of $\tau$. Since $\Hh^d(A)<\infty$, pointwise convergence and boundedness of the images give weak convergence of the pushforwards. Testing against nonnegative continuous functions of compact
support and passing to the limit yields \eqref{eq:3.4}.
\end{proof}

\subsection{Measurable charts and relative boundaries}
We first construct countably many affine charts for almost every positive-dimensional leaf. We then use the transverse comparison to prove that the union of the relative boundaries is Lebesgue-negligible, by contracting boundary points towards interior anchors; compare \cite[Lemma~4.19]{r11} for a similar statement about  faces of convex functions.

\begin{proposition}[Countable affine charts]\label{prop:3.4}
For each $1\le k\le\min(n,m)$ there are a Borel subfamily $\T^{k,0}\subset\T^k$, with $\nu_0(\T^k\setminus\T^{k,0})=0$, and countably many charts
\begin{equation}\label{eq:3.7}
\Phi:A\times D\longrightarrow\R^n,\qquad
\Phi(a,q)=a+L_a(q-q_*),
\end{equation}
with the following properties. The space $V$ is a $k$-dimensional linear space, and $C,D\subset V$ are bounded open convex polytopes satisfying $\cl C\subset D$ and $q_*\in C$. The set $A\subset P_V^{-1}(q_*)$ is compact, and $a\mapsto L_a$ is continuous on $A$. There is a Borel injection $A\ni a\mapsto\S_a\in\T^k$ such that
\begin{equation*}
    L_a=(P_V|_{V_{\S_a}})^{-1},\quad
\Phi(\{a\}\times D)\subset\ri\S_a,\quad
P_V\Phi(a,q)=q.
\end{equation*}
The map $\Phi$ is injective. For every $\S\in\T^{k,0}$ and every $x,y\in\ri\S$, there are a chart and an anchor $a\in A$ such that $\S_a=\S$ and $x,y\in\Phi(\{a\}\times C)$.
\end{proposition}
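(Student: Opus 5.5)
The plan is a countable covering by rational data, followed by a Borel selection and a reduction to compact pieces. Work within the Borel family $\T^k$. Fix countably many candidate triples $(V,C,D)$: $V$ ranges over a countable dense set of $k$-dimensional linear subspaces (spans of rational vectors), and $C,D\subset V$ range over rational open convex polytopes with $\cl C\subset D$. The point $q_*$ is taken to be a rational point of $C$. For each triple consider the set $\mathcal G(V,C,D,q_*)$ of leaves $\S\in\T^k$ satisfying two conditions:
\begin{enumerate}
\item $P_V|_{V_{\S}}$ is invertible with $\norm{(P_V|_{V_\S})^{-1}}$ bounded by a rational constant;
\item the set $\ri\S\cap P_V^{-1}(\cl D)$ projects onto $\cl D$ and is compactly contained in $\ri\S$, which means $P_V(\ri\S)\supset \cl D$.
\end{enumerate}
For such $\S$, put $a(\S)=$ the unique point of $\Aff\S\cap P_V^{-1}(q_*)$. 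This point lies in $\ri\S$, and $\Phi(a,q)=a+L_a(q-q_*)$ with $L_a=(P_V|_{V_\S})^{-1}$ parametrises $\Aff\S\cap P_V^{-1}(D)\subset\ri\S$. The covering claim is then elementary convex geometry. Given $x,y\in\ri\S$, the segment $[x,y]$ is a compact subset of $\ri\S$. Pick $V$ rational and close to $V_\S$, so that $P_V|_{V_\S}$ is an isomorphism. Then $P_V(\ri\S)$ is a relatively open convex subset of $V$ containing $P_V[x,y]$. Choose rational polytopes $C\supset P_V\{x,y\}$ with $\cl C\subset D$, $\cl D\subset P_V(\ri\S)$, and $q_*\in C$ rational. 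Thus every leaf in $\T^k$ is handled by some triple, for every pair of points, and all charts are available.

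Measurability comes next. The conditions defining $\mathcal G$ are Borel in $\S$, using the Wijsman-continuous projections $p_i$ and the barycentre $b(\S)$ from Section~\ref{sec:geometry}. The tangent space $V_\S$ is a Borel function of $\S$, via differences of the dense points $p_i(\S)$, and so is the condition $P_V(\ri\S)\supset\cl D$, by testing finitely many vertices of $D$ against the relatively open convex set. The map $\S\mapsto a(\S)$ is Borel. It is injective on $\mathcal G$ because $a(\S)\in\ri\S$ and distinct leaves have disjoint relative interiors. Hence the image $A_0=a(\mathcal G)$ is Borel by Lusin--Souslin, and $a\mapsto\S_a$ is its Borel inverse. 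Injectivity of $\Phi$ on $A_0\times D$ follows from the same disjointness, since $\Phi(a,q)\in\ri\S_a$, together with the fact that $q=P_V\Phi(a,q)$ recovers $q$.

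The remaining requirement is compactness of $A$ and continuity of $a\mapsto L_a$. I expect this to be the main obstacle, since $\S\mapsto V_\S$ is only Borel. The approach I would try first is Lusin's theorem applied to the finite measure $(a)_\#(\nu_0|_{\mathcal G})$ on $A_0$. This gives compact sets $A_j\subset A_0$ with complement of measure $<1/j$ on which $a\mapsto V_{\S_a}$ is continuous, and hence $a\mapsto L_a$ is continuous. Taking the union over $j$ of the charts restricted to $A_j$ loses only a $\nu_0$-null family $\mathcal N$ for each triple. Define $\T^{k,0}$ as $\T^k$ minus the countable union of these null families. It is Borel, $\nu_0(\T^k\setminus\T^{k,0})=0$, and every leaf in it lies in the chart of each triple it admits. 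This is exactly what the covering statement requires for all $x,y\in\ri\S$.

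One subtlety should be checked. A leaf removed for one triple could still be needed for another pair $x,y$. This is avoided by removing every leaf that fails for any triple, which remains a countable union of null sets. It is also worth confirming that compactness of $A$ requires no closedness of $\mathcal G$, since compactness is supplied by Lusin's theorem directly. Finally, the condition $P_V\Phi(a,q)=q$ holds because $P_V a=q_*$ and $P_VL_a=\Id_V$.
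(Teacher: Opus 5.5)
Your proposal is correct and follows essentially the same route as the paper. Both use countably many rational data $(V,C,D,q_*)$, an anchor map that is injective because anchors lie in relative interiors, Lusin--Souslin for the Borel inverse, Lusin's theorem for compact anchor sets on which $a\mapsto L_a$ is continuous, removal of countably many null families, and the rational-polytope argument for pairs of points. The differences are cosmetic. The paper uses a Borel partition of the Grassmannian with $\norm{(P_V|_W)^{-1}}\le2$ rather than a dense family with rational bounds. It also encodes $\cl D\subset P_V(\ri\S)$ through a margin $\cl D+\cl B_V(0,1/r)$ tested at a countable dense set of points, and this margin trick is also the natural way to make your vertex test $v\in P_V(\ri\S)$ visibly Borel.
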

\begin{proof}
We shall first show that $\S\mapsto V_{\S}$ is Borel. Let $(r_j)$ be countable and dense in $\R^n$, and let $x_j(\S)$ be the metric projection of $r_j$ onto $\S$. Each $x_j$ is continuous for the Wijsman topology. Indeed, a convergent sequence of sets has a bounded set of projections, and every convergent subsequence of these projections has its limit in $\S$ at distance $\dist(r_j,\S)$ from $r_j$. This determines the limit uniquely. The points $x_j(\S)$ are dense in $\S$. Choose the first $(k+1)$-tuple that is affinely independent. This is a Borel map and the span of its differences is $V_{\S}$.

Choose a countable, Borel partition of the $k$-dimensional Grassmannian into sets $\mathcal U(V)$ such that
\begin{equation*}
\norm{(P_V|_W)^{-1}}\le2\text{ for all }W\in \mathcal U(V).
\end{equation*}
For $V_{\S}\in \mathcal U(V)$, set $L_{\S}=(P_V|_{V_{\S}})^{-1}$. Given $q_*\in V$, the unique point of $\Aff\S\cap P_V^{-1}(q_*)$ is
\begin{equation*}
a(\S)=x_1(\S)+L_{\S}(q_*-P_Vx_1(\S)).
\end{equation*}
Fix orthonormal coordinates on each $V$. Let $C,D$ range over rational -- with respect to the chosen coordinates -- polytopes with $\cl C\subset D$, let $q_*$ range over a fixed countable dense subset of $C$, and let $r\in\N$. Consider the Borel family $G$ of leaves with $V_{\S}\in \mathcal U(V)$ such that
\begin{equation}\label{eq:3.8}
a(\S)+L_{\S}(q-q_*)\in\S\text{ for all }q\in\cl D+\cl B_V(0,1/r).
\end{equation}

Set $K=\cl D+\cl B_V(0,1/r)$ and choose a countable
dense subset $(q_j)$ of $K$. Write
\begin{equation*}
 F_{\S}(q)=a(\S)+L_{\S}(q-q_*).
\end{equation*}
Since $F_{\S}$ is continuous and $\S$ is closed, condition
\eqref{eq:3.8} is equivalent to
\begin{equation*}
    \dist(F_{\S}(q_j),\S)=0
 \qquad\text{for every }j\ge1.
\end{equation*}

For each fixed $j$, the left-hand side is a Borel function of $\S$:
the maps $\S\mapsto a(\S)$ and $\S\mapsto L_{\S}$ are Borel, and
$(y,\S)\mapsto\dist(y,\S)$ is jointly continuous for the Wijsman
topology. Thus $G$ is a Borel subfamily of $\T^k$.

For $\S\in G$ and $q\in\cl D$, condition \eqref{eq:3.8}
also gives
\begin{equation*}
 F_{\S}(q)+L_{\S}\bigl(B_V(0,1/r)\bigr)\subseteq\S.
\end{equation*}

The set on the left is a neighbourhood of $F_{\S}(q)$ relative to
$\Aff\S$, because $L_{\S}:V\to V_{\S}$ is a linear isomorphism.
Consequently, $F_{\S}(\cl D)\subseteq\ri\S$ and, in
particular, $a(\S)=F_{\S}(q_*)\in\ri\S$.
If two leaves in $G$ have the same anchor, their relative interiors
therefore intersect, so the leaves coincide. Hence the anchor map
is injective on $G$.

The Lusin--Souslin theorem \cite{r23}, applied to the Borel
injection
\begin{equation*}
 a|_G:G\to P_V^{-1}(q_*),
\end{equation*}
now shows that $a(G)$ is Borel and that the inverse map
$a\mapsto\S_a$ is Borel.

Consider the image of $\nu_0|_G$ under the anchor map $a$. By Lusin's theorem and inner regularity, there are countably many compact sets $A\subset a(G)$, covering $a(G)$ up to a negligible set, on which $a\mapsto L_{\S_a}$ is continuous. For each such $A$, define \eqref{eq:3.7}. If two points in the image of  $\Phi$ agree, the corresponding leaves intersect in their relative interia, and thus they coincide. Thus $\Phi$ is injective and its image is Borel.

There are countably many choices of the data. For each choice, remove the $\nu_0$-negligible family not covered by the corresponding compact sets. Denote the remaining family by $\T^{k,0}$. We keep the compact anchor sets unchanged, even when some of their leaves have been removed from $\T^{k,0}$ in this procedure.

Finally, $P_V(\ri\S)$ is open and convex. Any two of its points lie in a rational polytope $C$ with $\cl C\subset D$ and $\cl D\subset P_V(\ri\S)$. Choose $q_*$ as above. Compactness gives $1/r>0$ for which \eqref{eq:3.8} holds. Every leaf in $\T^{k,0}$ therefore occurs in a compact chart for these data. This proves the assertion concerning pairs of points.
\end{proof}

\begin{lemma}[Negligibility of relative boundaries]\label{lem:3.3}
For every $k\ge1$, the union of the relative boundaries of all the
$k$-dimensional leaves is $\lambda$-negligible.
\end{lemma}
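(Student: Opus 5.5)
The plan is to reduce to a one-parameter family of transverse slices inside a single chart of Proposition~\ref{prop:3.4}. There the boundary points of $k$-dimensional leaves appear as the far endpoints of segments issuing from chart points, and Lemma~\ref{lem:3.2} gives a quantitative "flow-box" bound that forces these endpoints to form a Lebesgue-null set. Points of $N(u)$ are $\lambda$-negligible by Rademacher's theorem. It therefore suffices to treat boundary points $z$ that lie in a unique leaf $\S(z)\in\T^k$. Since $\nu_0(\T^k\setminus\T^{k,0})=0$, I first argue that the union of leaves of $\T^k\setminus\T^{k,0}$ is $\lambda$-null. This should follow because $\sigma\sim\lambda$ and $\nu_0=\S_\#\sigma$, applied to $\sigma(\S^{-1}(\T^k\setminus\T^{k,0}))=0$. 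After that, only boundaries of leaves in $\T^{k,0}$ remain.

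Fix one chart $\Phi:A\times D\to\R^n$ with data $V,C,D,q_*$. Fix a unit vector $e\in V$ from a countable dense set, and rational levels. Every boundary point $z$ of a leaf $\S\in\T^{k,0}$ satisfies the following: there is a chart, an anchor $a$ with $\S_a=\S$, a point $x=\Phi(a,q)$ with $q\in C$, and a direction $e$ such that $z$ is the first exit point of the ray $x+\R_{\ge0}L_ae$ from $\S$. By countability it suffices to show that, for one such choice, the set $Z$ of exit points is $\lambda$-null. Parametrise
\[
\Psi(a,q,r)=\Phi(a,q)+rL_ae,
\]
with $r$ ranging up to the exit time $\rho(a,q)$. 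Then $Z$ is the image of the graph of $\rho$, which is Borel by the Borel measurability of the leaf map and of $\dist(\cdot,\S)$. This graph is a null set in the parameter space $A\times D\times\R$, with the product of the slice measure and $\lambda^{k}$, by Fubini. The task is therefore to prove that $\Psi$ maps parameter-null sets to $\lambda$-null sets, that is, an absolute continuity of $\Psi$.

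For this I apply Lemma~\ref{lem:3.2} with $M$ the $(n-k+1)$-dimensional affine space spanned by $P_V^{-1}(q)$ and the direction $e$, and with $\ell$ the coordinate along $e$. The slice $\ell^{-1}(0)$ contains $d=n-k$ transverse dimensions. The set $A'\subset\ell^{-1}(0)$ consists of chart points $\Phi(a,q)$, and $\tau$ sends each of them to the point of the same leaf at level $H$ along $L_ae$, truncated at the exit time. Hypothesis (i) holds because the two points lie on one leaf. Hypothesis (iii) holds because distinct image points lie in the relative interiors of distinct leaves, or at distinct exit points of distinct leaves; in either case $c_u>0$ between them, by the equality-defect characterisation of leaves. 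Hypothesis (ii) holds because the source point lies in $\ri\S$, so $c_u(x,y)=0$ forces $y\in\S\cap M$, which together with the level constraint determines $y$. Combining \eqref{eq:3.4} over levels with Fubini in the $q$-variables then gives
\[
\Psi_\#\bigl(\Hh^{n-k}\otimes\lambda^k\otimes\lambda^1\bigr)\ \ge\ c\,\lambda
\]
on the image, with $c$ locally uniform as long as $s$ stays away from $1$. The constant comes from the factor $(1-s)^{-d}$ and from $\norm{L_a}\le2$. I then run the same comparison backwards, taking source points slightly beyond a given level and contracting towards the chart, so that exit points are approached from level $sH$ with $s<1$ fixed. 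This controls the $\lambda$-measure of $\Psi(E)$ by the parameter measure of $E$ for every Borel $E$. Applying it to the graph of $\rho$ gives $\lambda(Z)=0$.

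The main obstacle is this last step. Lemma~\ref{lem:3.2} only bounds pushforwards from above, so it directly yields "no concentration" of transported slice measure. Turning it into the statement that $\Psi$ does not inflate null sets near the exit points requires choosing the roles of source and target so that the points of interest are always at an intermediate parameter $s<1$, where the constant $(1-s)^{-d}$ stays bounded. The truncation at the exit time must also be handled while keeping hypothesis (ii) valid: a limit point in $\cl\tau(A)$ could lie on the relative boundary of a different leaf. I would try first to place the sources at the exit points themselves and contract towards compact sets of chart points. Hypothesis (ii) then holds because the targets lie in relative interiors, and \eqref{eq:3.4} bounds the slice measure of exit points at level $0$ by $(1-s)^{d}$ times the slice measure of their images at level $sH$. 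These images sweep out a set of positive measure only if the exit set does, and a Lebesgue density-point argument in the $e$-direction should then yield the contradiction.
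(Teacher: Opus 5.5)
\paragraph{Main route.} Your main route breaks at the reduction. Since $\Phi(a,q)=a+L_a(q-q_*)$, you have $\Psi(a,q,r)=a+L_a(q+re-q_*)$. So $\Psi$ depends only on $(a,q+re)$, and the parameter space has a redundant direction. Nullity of the graph of $\rho$ in $A\times D\times\R$ is therefore automatic and says nothing about its image.

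The property you set out to prove, that $\Psi$ maps parameter-null sets to $\lambda$-null sets, is false. The set $\{(a,q,r)\mid \ip{q-q_*}{e}=0\}$ is null for $\Hh^{n-k}\otimes\lambda^k\otimes\lambda^1$. Yet its image contains $\Phi(A\times U)$ for a nonempty open $U\subset C$, which has positive measure whenever the chart image does.

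What would suffice is absolute continuity of $\lambda$ with respect to $\Psi_\#(\Hh^{n-k}\otimes\lambda^k\otimes\lambda^1)$ near the exit set, applied to the full preimage $\Psi^{-1}(Z)$. That preimage is null by Fubini, since relative boundaries of convex sets are $\lambda^k$-null. But this is a lower bound on a pushforward, whereas \eqref{eq:3.4} with chart points as sources only bounds pushforwards from above. Your displayed inequality $\Psi_\#(\cdots)\ge c\lambda$ therefore does not follow. In addition, the truncated $\tau$ does not map into a single level set, as Lemma~\ref{lem:3.2} requires.

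\paragraph{Last paragraph.} Here you swap the roles correctly, and this is the paper's application of Lemma~\ref{lem:3.2}. The sources are relative boundary points lying in a unique leaf; this, with injectivity of $P_V$ on $\Aff\S$, gives (ii). The targets are the interior anchors in $P_V^{-1}(q_*)$, which gives (iii).

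However, you stop before the step that produces a contradiction. The direction you state, that the images have positive measure only if the exit set does, is the unhelpful one. The constant is also $(1-s)^{-d}$, not $(1-s)^{d}$.

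The paper proceeds as follows.
\begin{itemize}
\item Take a compact set $K$ of such points with $\lambda(K)>0$, and put $F_t(x)=(1-t)x+tb(x)$, with $b(x)$ the anchor of $\S(x)$.
\item Apply the lemma on each slice $K\cap P_V^{-1}(q)$ and integrate with the change of variables $q\mapsto(1-t)q+tq_*$. This gives $(F_t)_\#(\lambda|_K)\le(1-t)^{-n}\lambda$.
\item The missing idea is that $F_t(K)\cap K=\emptyset$. Indeed $F_t(x)\in\ri\S(x)$, while every point of $K$ lies on the relative boundary of its unique leaf.
\item Together with $\norm{F_t(x)-x}\le Mt$, this gives $(1-t)^n\lambda(K)\le\lambda(\{z\mid\dist(z,K)\le Mt\}\setminus K)\to0$.
\end{itemize}

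A density-point argument could replace this last limit, but only once disjointness is established. It must also be $n$-dimensional, since the displacements $b(x)-x$ are not parallel to a fixed $e$.

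\paragraph{A Fubini alternative.} To keep your Fubini idea, put the target slice beyond $q_*$. Then the anchor slice, rather than the exit set, sits at an intermediate time bounded away from $1$. This bounds the $\Hh^{n-k}$-measure of each boundary slice by a constant times that of the corresponding anchors. Fubini, together with the $\lambda^k$-nullity of $P_V(\partial\S_a)$, then gives $\lambda(Z)=0$.
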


\begin{proof}
It suffices to consider $1\le k\le\min\{n,m\}$. Let $ \partial T_k$ be the union of all relative boundaries of $k$-dimensional leaves.
This set is Borel. Since $\nu_0=\S_\#(w\lambda)$ and $w>0$,
Proposition~\ref{prop:3.4} gives
\begin{equation*}
     \lambda\bigl(\S^{-1}(\T^k\setminus\T^{k,0})\bigr)=0.
\end{equation*}
For each chart, the family
\begin{equation*}
    \mathcal G_A=\{\S_a:a\in A\}
\end{equation*}
is Borel, and the inverse map $\mathcal G_A\ni\S_a\mapsto a$ is
Borel, by the Lusin--Souslin theorem \cite{r23}. These countably many
families cover $\T^{k,0}$. As $N(u)$ is negligible, it therefore suffices
to fix one chart and prove that the Borel set
\begin{equation*}
    E=(\partial T_k\setminus N(u))\cap\S^{-1}(\mathcal G_A)
\end{equation*}
is negligible.

Suppose that $\lambda(E)>0$, and choose a compact set $K\subset E$
with $\lambda(K)>0$. For $x\in K$, let $b(x)\in A$ be the anchor
of $\S(x)$. The preceding Borel inverse defines a Borel map
$b:K\to A$, and $b(x)\in\ri\S(x)$.
For $0<t<1$, set
\begin{equation*}
    F_t(x)=(1-t)x+tb(x)\text{ for } x\in K.
\end{equation*}
Convexity gives $F_t(x)\in\ri\S(x)$. If $F_t(x)=F_t(y)$,
the corresponding leaves have intersecting relative interiors and
therefore coincide. Their anchors agree, so $(1-t)(x-y)=0$ and
$x=y$. Thus $F_t$ is a Borel injection, and $F_t(K)$ is Borel
by the Lusin--Souslin theorem \cite{r23}.
Since every point of $K$ lies on the relative boundary of its unique
leaf, $F_t(K)\cap K=\emptyset$. Boundedness of $K$ and $A$
also gives a constant $M<\infty$, independent of $t$, such that
\begin{equation}\label{eq:3.5}
 \sup\{\norm{F_t(x)-x}\mid x\in K\}\le Mt.
\end{equation}

Put $d=n-k$ and $K_q=K\cap P_V^{-1}(q)$ for $q\in V$.
Notice that $K_{q_*}=\emptyset$: a point of $\S(x)$ projecting
to $q_*$ must equal its interior anchor $b(x)$.
For $q\ne q_*$ with $K_q\ne\emptyset$, apply
Lemma~\ref{lem:3.2} to $K_q$ and $b|_{K_q}$ in the affine space
\begin{equation*}
     M_q=P_V^{-1}\bigl(q+\mathbb R(q_*-q)\bigr),
 \quad
 \ell_q(z)=\left\langle P_Vz-q,
       \frac{q_*-q}{\norm{q_*-q}}\right\rangle.
\end{equation*}
The map $b|_{K_q}$ is bounded and Borel, with image at level
$H_q=\norm{q_*-q}>0$, and $c_u(x,b(x))=0$.
If $x\in K_q$ and $a\in\cl(b(K_q))$ satisfy $c_u(x,a)=0$,
then $a\in\S(x)$ by uniqueness of the leaf through $x$.
Since $P_Va=q_*=P_Vb(x)$ and $P_V$ is injective on
$\Aff\S(x)$, it follows that $a=b(x)$.
Finally, distinct anchors lie in the relative interiors of distinct
leaves and hence have strictly positive equality defect.
Thus all assumptions of Lemma \ref{lem:3.2} hold, and
\begin{equation*}
    (F_t)_\#\bigl(\Hh^d|_{K_q}\bigr)
 \le (1-t)^{-d}\Hh^d|_{
       P_V^{-1}\bigl((1-t)q+tq_*\bigr)}.
\end{equation*}
Integrating in $q$ by Fubini's theorem and making the change of
variables $q'=(1-t)q+tq_*$ yields
\begin{equation}\label{eq:3.6}
 (F_t)_\#(\lambda|_K)\le(1-t)^{-n}\lambda.
\end{equation}
Here the transverse comparison contributes $(1-t)^{-(n-k)}$
and the change of variables contributes $(1-t)^{-k}$.

Applying \eqref{eq:3.6} to the Borel set $F_t(K)$ and using
\eqref{eq:3.5}, we obtain
\begin{equation*}
     (1-t)^n\lambda(K)
 \le\lambda(F_t(K))
 \le\lambda\bigl(\{z:\dist(z,K)\le Mt\}\setminus K\bigr).
\end{equation*}
The last term converges to zero as $t$ converges to zero, by compactness of $K$ and continuity
from above of Lebesgue measure on its bounded neighbourhoods.
This contradicts $\lambda(K)>0$. Therefore $\lambda(E)=0$,
and the countable chart cover proves the assertion.
\end{proof}

\subsection{The local conditional density}
\begin{lemma}[Product density in an affine chart]\label{lem:3.5}
Fix a chart from Proposition~\ref{prop:3.4}, and put
\begin{equation*}
     Z=\Phi(A\times C),\quad
 \Sigma_q=\Phi(A\times\{q\}),\quad q\in D,\quad d=n-k.
\end{equation*}
If $\lambda(Z)>0$, then $\zeta=\Hh^d|_A$ is finite and nonzero,
and there is a Borel function $f:A\times C\to(0,\infty)$, bounded
above and bounded away from zero, such that
\begin{equation}\label{eq:3.9}
 d(\Phi^{-1})_\#(\lambda|_Z)
 =f\,d\bigl(\zeta\otimes(\lambda^k|_C)\bigr).
\end{equation}
For $\zeta$-almost every $a$, the function $\log f(a,\cdot)$ is
Lipschitz on $C$, and
\begin{equation}\label{eq:3.10}
 f(a,(1-t)q_0+tq_1)\ge t^d f(a,q_1)
 \text{ for all }q_0,q_1\in C,\ 0<t\le1.
\end{equation}
Moreover, put
\begin{equation*}
     W(a)=\int_C w(\Phi(a,q))f(a,q)\dd \lambda^k(q),
 \quad J_a=\sqrt{\det(L_a^*L_a)}.
\end{equation*}
Then $0<W(a)<\infty$ and the conditional measure at $a$ of
$\sigma|_Z$ has $\Hh^k$-density
\begin{equation}\label{eq:3.11}
 \frac{w(\Phi(a,q))f(a,q)}{W(a)J_a}.
\end{equation}
For $W\zeta$-almost every $a$, this conditional agrees with
$\sigma_{\S_a}|_Z/\sigma_{\S_a}(Z)$.
\end{lemma}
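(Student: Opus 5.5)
The plan is to compare the transverse slices $\Sigma_q$, $q\in D$, with one another using Lemma~\ref{lem:3.2}, and then assemble the slice measures with Fubini's theorem in the coordinates $P_V$.

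\textbf{Slice measures and the contraction between slices.} Each slice satisfies $\Sigma_q\subset P_V^{-1}(q)$, which is a $d$-dimensional affine space. Define measures on $A$ by
\[
\mu_q=\bigl(\Phi(\cdot,q)^{-1}\bigr)_\#\bigl(\Hh^d|_{\Sigma_q}\bigr).
\]
Since $\Phi(a,q_*)=a$, we have $\mu_{q_*}=\zeta$. This measure is finite because $A$ is a compact subset of a $d$-dimensional affine space.

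Fix distinct $q_0,q_1\in D$. Apply Lemma~\ref{lem:3.2} in $M=P_V^{-1}(q_0+\R(q_1-q_0))$, with $\ell$ as in the proof of Lemma~\ref{lem:3.3}, to the map $\tau:\Phi(a,q_0)\mapsto\Phi(a,q_1)$ on $\Sigma_{q_0}$. The three hypotheses are checked exactly as in Lemma~\ref{lem:3.3}:
\begin{enumerate}
\item $c_u$ vanishes along each leaf;
\item a zero of $c_u(x,\cdot)$ lies on $\S_a$, because $x\in\ri\S_a$, and it is pinned down by $P_V$;
\item distinct points of $\tau(A)$ lie in the relative interiors of distinct leaves, so $c_u>0$ between them.
\end{enumerate}
Because $\Phi(a,\cdot)$ is affine in $q$, the interpolation $\tau_s$ sends $\Phi(a,q_0)$ to $\Phi(a,(1-s)q_0+sq_1)$. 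Pulling back to $A$ gives
\[
\mu_{(1-s)q_0+sq_1}\ge(1-s)^d\mu_{q_0}\quad\text{for all }0\le s<1.
\]
Exchanging the roles of the endpoints (writing $t=1-s$) gives the form needed for \eqref{eq:3.10}.

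\textbf{Uniform comparability and construction of $f$.} Since $\cl C\subset D$, for $q,q'\in C$ I extend the segment from $q'$ through $q$ to a point of $D$ at distance bounded below from $C$. This exhibits $q$ as an interpolation with parameter $1-O(\norm{q-q'})$. Consequently all the $\mu_q$, $q\in C$, are mutually absolutely continuous with $\zeta$, with densities bounded above and away from zero, and
\[
\Bigl|\log\tfrac{d\mu_q}{d\mu_{q'}}\Bigr|\le L\norm{q-q'}.
\]
Since $\lambda=\int\Hh^d|_{P_V^{-1}(q)}\dd\lambda^k(q)$ by Fubini's theorem, we get
\[
(\Phi^{-1})_\#(\lambda|_Z)=\int_C\mu_q\dd\lambda^k(q).
\]
This forces $\zeta\neq0$ whenever $\lambda(Z)>0$.

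\textbf{Main obstacle: a jointly Borel, regular representative.} The step I expect to be hardest is choosing a jointly Borel representative $f$ that works for every pair of points rather than almost every pair. I would proceed in four steps.
\begin{enumerate}
\item Take Radon--Nikodym derivatives $d\mu_q/d\zeta$ for $q$ in a countable dense set $Q\subset C$, for instance via a martingale limit or $\limsup$ of ratios of balls, which is jointly Borel.
\item Discard a single $\zeta$-null set of anchors outside of which the log-Lipschitz bound and \eqref{eq:3.10} hold for all pairs in $Q$ and all rational $t$.
\item Extend to all of $C$ by Lipschitz continuity of $\log f(a,\cdot)$; the inequalities pass to the limit by continuity.
\item Verify \eqref{eq:3.9}: it holds because both sides agree when tested against products $\mathbf 1_B(a)\phi(q)$, using continuity of $q\mapsto\mu_q(B)$.
\end{enumerate}

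\textbf{Conditional densities.} The measure $(\Phi^{-1})_\#(\sigma|_Z)$ has density $w(\Phi(a,q))f(a,q)$ with respect to $\zeta\otimes\lambda^k|_C$. Its fibre conditionals are therefore $wf/W(a)\dd\lambda^k$, and $0<W(a)<\infty$ because $w$ and $f$ are positive and bounded on bounded sets. Pushing forward by the affine map $\Phi(a,\cdot)$, whose $k$-dimensional Jacobian is $J_a$, turns $\lambda^k$ into $J_a^{-1}\Hh^k$ on the leaf, which gives \eqref{eq:3.11}.

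Finally, $a\mapsto\S_a$ is a Borel injection with Borel inverse. By Lemma~\ref{lem:2.2}, applied with $a=\mathbf 1_Z$, the normalised restrictions $\sigma_{\S_a}|_Z/\sigma_{\S_a}(Z)$ are conditionals of $\sigma|_Z$ over the same partition. Essential uniqueness of disintegrations then yields agreement for $W\zeta$-almost every $a$.
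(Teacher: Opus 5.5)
Your proposal is correct and follows essentially the same route as the paper. You obtain the slice comparisons and the two-sided log-Lipschitz bounds from Lemma~\ref{lem:3.2} applied to $\Phi_{q_1}\circ\Phi_{q_0}^{-1}$, build the density on a countable dense set and extend it by continuity, and then identify the conditionals through the anchor/leaf correspondence and uniqueness of disintegration. Two small refinements: take $Q$ closed under rational convex combinations, as the paper does, so that \eqref{eq:3.10} restricted to $Q$ is meaningful. Also, joint Borel measurability of $f$ comes from the continuous extension in $q$ (approximating $q$ by Borel maps into $Q$), not from a special choice of Radon--Nikodym derivatives, which is trivial for countably many $q$.
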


\begin{proof}
We first compare the transverse slice measures, then choose
their Radon--Nikodym densities with continuous dependence on
the leaf coordinate, and finally identify the resulting
conditional measures.

\textbf{Step 1: comparison of the transverse measures.}

For $q\in D$, let $\Phi_q(a)=\Phi(a,q)$ and define
\begin{equation*}
    M_q=(\Phi_q^{-1})_\#(\Hh^d|_{\Sigma_q}).
\end{equation*}
These measures are finite and form a Borel kernel. For every Borel set $E\subset A$, the set
$Z_E=\Phi(E\times D)$ is Borel by the Lusin--Souslin theorem
\cite{r23}, since $\Phi$ is a continuous injection.
The identity $P_V\Phi(a,q)=q$ implies
\begin{equation*}
     Z_E\cap P_V^{-1}(q)=\Phi_q(E)\text{ for }q\in D.
\end{equation*}
Since $P_V^{-1}(q)=q+V^\perp$ and translation by $q$ preserves
$\Hh^d$, we obtain
\begin{equation*}
    M_q(E)=\Hh^d(\Phi_q(E))
       =\int_{V^\perp}\mathbf{1}_{Z_E}(q+z)\dd\Hh^d(z).
\end{equation*}
The integrand is jointly Borel in $(q,z)$, and
$\Hh^d|_{V^\perp}$ is $\sigma$-finite. Thus $q\mapsto M_q(E)$
is Borel.

Fix distinct $p_0,p_1\in D$, and put
\begin{equation*}
 H=\norm{p_1-p_0}
 \text{ and }
 e=\frac{p_1-p_0}{H}.
\end{equation*}
Consider the $(d+1)$-dimensional affine space
$M=P_V^{-1}(p_0+\R e)$, equipped with the affine coordinate
\begin{equation*}
 \ell(z)=\ip{P_Vz-p_0}{e}
 \text{ for }z\in M.
\end{equation*}
The linear part of $\ell$ has norm one, and its level sets in
$ M$ satisfy
\begin{equation*}
 \ell^{-1}(h)=P_V^{-1}(p_0+he)
 \text{ for }h\in\R.
\end{equation*}
In particular, $\Sigma_{p_0}\subset\ell^{-1}(0)$ and
$\Sigma_{p_1}\subset\ell^{-1}(H)$.
Apply Lemma~\ref{lem:3.2} with source $\Sigma_{p_0}$ and
correspondence $\tau=\Phi_{p_1}\circ\Phi_{p_0}^{-1}$, so that
\begin{equation*}
 \tau(\Phi_{p_0}(a))=\Phi_{p_1}(a)
 \text{ for }a\in A.
\end{equation*}
Each $\Phi_p$ is a continuous injection from the compact set $A$
and hence a homeomorphism onto the compact set $\Sigma_p$.
Thus the source is bounded and Borel, and $\tau$ is continuous
and bounded.

Corresponding points belong to the same leaf, so
$c_u(x,\tau(x))=0$ for every $x\in\Sigma_{p_0}$.
To check the closure condition, let $x=\Phi_{p_0}(a)$ and
$y\in\cl(\tau(\Sigma_{p_0}))=\Sigma_{p_1}$ satisfy
$c_u(x,y)=0$. Since $x\in\ri\S_a$, the equality-defect property
gives $y\in\S_a$. Both $y$ and $\tau(x)$ project to $p_1$,
and $P_V$ is injective on $\Aff\S_a$, so $y=\tau(x)$.
Likewise, if two target points have zero equality defect, then
they belong to the same leaf, since each lies in the relative
interior of its leaf. Their equal projections force them to
coincide. Hence distinct target points have strictly positive
equality defect, and all hypotheses of Lemma~\ref{lem:3.2} hold.

For $0\le s<1$, write $p_s=(1-s)p_0+sp_1\in D$ and
$\tau_s=(1-s)\Id+s\tau$. Affinity in the second chart coordinate
gives
\begin{equation*}
 \tau_s(\Phi_{p_0}(a))=\Phi_{p_s}(a)
 \text{ for }a\in A.
\end{equation*}
The lemma therefore yields
\begin{equation*}
 (\tau_s)_\#(\Hh^d|_{\Sigma_{p_0}})
 \le (1-s)^{-d}\Hh^d|_{P_V^{-1}(p_s)}.
\end{equation*}
For every Borel set $E\subset A$, the set $\Phi_{p_s}(E)$ is
Borel, and injectivity of $\Phi_{p_s}$ gives
\begin{equation*}
 \tau_s^{-1}(\Phi_{p_s}(E))=\Phi_{p_0}(E).
\end{equation*}
Evaluating the preceding domination on $\Phi_{p_s}(E)$, we obtain
\begin{equation*}
 \begin{aligned}
 M_{p_0}(E)
 &= (\tau_s)_\#(\Hh^d|_{\Sigma_{p_0}})
       \bigl(\Phi_{p_s}(E)\bigr)\\
 &\le (1-s)^{-d}\Hh^d\bigl(\Phi_{p_s}(E)\bigr)\\
 &= (1-s)^{-d}M_{p_s}(E).
 \end{aligned}
\end{equation*}
Thus, as measures on $A$,
\begin{equation*}
 M_{p_0}\le(1-s)^{-d}M_{(1-s)p_0+sp_1}
 \text{ for }0\le s<1.
\end{equation*}

Set $\rho=\tfrac12\dist(\cl C,V\setminus D)>0$.
For distinct $q,q'\in C$, write $r=\norm{q-q'}$ and put
\begin{equation*}
 q''=q'+\frac{\rho}{r}(q'-q)\in D.
\end{equation*}
Then
\begin{equation*}
 q'=\frac{\rho}{r+\rho}q+\frac{r}{r+\rho}q''.
\end{equation*}
Apply the above comparison with $p_0=q$, $p_1=q''$ and
$s=r/(r+\rho)$. The intermediate coordinate is $q'$, and
$1-s=\rho/(r+\rho)$, so
\begin{equation*}
 M_q\le\left(1+\frac r\rho\right)^d M_{q'}.
\end{equation*}
For the reverse comparison, interchange $q$ and $q'$ and use the
new target coordinate
\begin{equation*}
 q^-=q+\frac{\rho}{r}(q-q')\in D.
\end{equation*}
The same argument gives
\begin{equation*}
 M_{q'}\le\left(1+\frac r\rho\right)^dM_q.
\end{equation*}
Consequently,
\begin{equation*}
 \left(1+\frac{\norm{q-q'}}{\rho}\right)^{-d}M_q
 \le M_{q'}\le
 \left(1+\frac{\norm{q-q'}}{\rho}\right)^dM_q
 \text{ for }q,q'\in C,
\end{equation*}
where the case $q=q'$ is immediate.

Finally, set $q_t=(1-t)q_0+tq_1$.
For distinct $q_0,q_1\in C$ and $0<t<1$, apply the same comparison
with $p_0=q_1$, $p_1=q_0$ and $s=1-t$.
The intermediate coordinate is $p_s=q_t$, and $1-s=t$.
Hence
\begin{equation*}
 M_{q_1}\le t^{-d}M_{q_t}
 \text{ for }q_0,q_1\in C,\ 0<t\le1,
\end{equation*}
where the cases $q_0=q_1$ and $t=1$ are immediate.

\textbf{Step 2: construction of a continuous density.}

Since $\Phi_{q_*}=\Id$ on $A$, we have
$M_{q_*}=\zeta=\Hh^d|_A$. This measure is nonzero, since otherwise
the comparison and Fubini's theorem would give $\lambda(Z)=0$.
All $M_q$, $q\in C$, are uniformly equivalent to $\zeta$.
Choose a countable dense set $Q\subset C$ containing $q_*$ and
closed under rational convex combinations. For $q\in Q$, choose
a Borel representative $f_q=dM_q/d\zeta$.
After deleting one Borel $\zeta$-negligible subset of $A$, the preceding
measure inequalities imply simultaneously that
\begin{equation*}
 \begin{split}
 f_{q_*}(a)&=1,\\
 |\log f_q(a)-\log f_{q'}(a)|
 &\le d\log\left(1+\frac{\norm{q-q'}}{\rho}\right)
 \le\frac d\rho\norm{q-q'},\\
 f_{(1-t)q_0+tq_1}(a)&\ge t^d f_{q_1}(a)
 \end{split}
\end{equation*}
for all $q,q',q_0,q_1\in Q$ and rational $t\in(0,1)$.
Only countably many inequalities are involved.

For each remaining $a$, extend $q\mapsto\log f_q(a)$ continuously
from $Q$ to $C$, and let $f(a,q)$ be its exponential. Put $f=1$
on the exceptional set of anchors. The function $f$ is jointly
Borel: choose Borel maps $q_j:C\to Q$ with
$\norm{q_j(q)-q}<1/j$ and use the pointwise limit of
$f_{q_j(q)}(a)$ off that exceptional set.
The comparison with $q_*$ gives uniform positive lower and finite
upper bounds. Continuity gives \eqref{eq:3.10} for every pair of
points and every time.

If $q_j\in Q$ converges to $q\in C$, the measure comparison gives
\begin{equation*}
    \lim_{j\to\infty}M_{q_j}(E)= M_q(E)
\end{equation*}
for every Borel $E\subset A$.
Bounded convergence therefore yields
\begin{equation*}
     M_q(E)=\lim_{j\to\infty}\int_E f(a,q_j)\dd\zeta(a)
       =\int_E f(a,q)\dd\zeta(a).
\end{equation*}
Thus $dM_q=f(\cdot,q)d\zeta$ for every $q\in C$, and Fubini's
theorem proves \eqref{eq:3.9}.

\textbf{Step 3: identification of the conditional measures.}

Since $\sigma=w\lambda$, equation~\eqref{eq:3.9} gives
\begin{equation*}
d (\Phi^{-1})_\#(\sigma|_Z)
 =(w\circ\Phi)f\,d\bigl(\zeta\otimes(\lambda^k|_C)\bigr).
\end{equation*}
Integrating this joint density in $q$ gives the marginal $W\dd\zeta$
on the anchor set $A$, where
\begin{equation*}
    W(a)=\int_C w(\Phi(a,q))f(a,q)\dd \lambda^k(q).
\end{equation*}
Dividing by this integral therefore gives the conditional
probability measure in the coordinate variable:
\begin{equation*}
 \frac{w(\Phi(a,q))f(a,q)}{W(a)}\dd \lambda^k(q).
\end{equation*}
This is the normalisation described in Lemma~\ref{lem:2.2}.
Positivity and continuity of $w$ on the compact set
$\Phi(A\times\cl C)$, together with the positive lower and finite
upper bounds for $f$, imply $0<W(a)<\infty$.
To express the conditional on the leaf section rather than in
coordinates, fix $a$ and push the measure forward by $q\mapsto\Phi(a,q)$.
This affine map has constant $k$-dimensional Jacobian
$J_a=\sqrt{\det(L_a^*L_a)}$, so the area formula gives
\begin{equation*}
 (\Phi(a,\cdot))_\#(\lambda^k|_C)
 =J_a^{-1}\Hh^k|_{\Phi(\{a\}\times C)}.
\end{equation*}
Thus the density with respect to $\Hh^k$ is the coordinate density
divided by $J_a$, which proves \eqref{eq:3.11}.
We now identify the chart conditionals with the restrictions of the
global leaf conditionals. Let $\beta_a$ denote the probability measure
on $\Phi(\{a\}\times C)$ with density \eqref{eq:3.11}.
Set $G_A=\{\S_a:a\in A\}$ and let $\iota:G_A\to A$ be the
Borel isomorphism $\iota(\S_a)=a$.
The anchor map $\alpha:Z\to A$, defined by
$\alpha(\Phi(a,q))=a$, satisfies
\begin{equation*}
 \alpha(x)=\iota(\S(x))
 \text{ for }x\in Z\setminus N(u).
\end{equation*}
Thus the anchor and leaf labels describe the same partition of $Z$,
up to a $\sigma$-negligible set.

By Lemma~\ref{lem:2.2}, restricting the global disintegration to $Z$
gives the quotient measure
\begin{equation*}
 d\nu_Z=\sigma_{\S}(Z)\dd\nu_0
\end{equation*}
and the normalised conditionals
$\sigma_{\S}|_Z/\sigma_{\S}(Z)$ wherever $\sigma_{\S}(Z)>0$.
The measure $\nu_Z$ is concentrated on $G_A$.
Since $\sigma(N(u))=0$, the two descriptions of the quotient give
\begin{equation}\label{eq:3.12}
 W\dd\zeta=d\alpha_\#(\sigma|_Z)=\dd\iota_\#\nu_Z.
\end{equation}
Uniqueness of disintegration \cite{r8} therefore yields
\begin{equation*}
 \beta_a=\frac{\sigma_{\S_a}|_Z}{\sigma_{\S_a}(Z)}
 \text{ for }W\zeta\text{-almost every }a,
\end{equation*}
with a positive denominator at almost every such anchor.

Finally, if $E\subset A$ is Borel and $\zeta(E)=0$, then
\eqref{eq:3.12} gives
\begin{equation*}
 \int_{\iota^{-1}(E)}\sigma_{\S}(Z)\dd\nu_0(\S)
 =\int_E W(a)\dd\zeta(a)=0.
\end{equation*}
Consequently,
\begin{equation*}
 \nu_0\bigl(\{\S\in\iota^{-1}(E)\mid \sigma_{\S}(Z)>0\}\bigr)=0.
\end{equation*}
This transfers the exceptional anchors 
to a negligible family of leaves whenever the chart has positive
conditional mass.
\end{proof}

\begin{proof}[Proof of Theorem~\ref{thm:3.1}]
Fix $k\ge1$, and let $Z_j=\Phi_j(A_j\times C_j)$ enumerate the chart images of Proposition~\ref{prop:3.4}.
If $\lambda(Z_j)=0$, disintegration gives
\begin{equation*}
 0=\sigma(Z_j)=\int\sigma_{\S}(Z_j)\dd\nu_0(\S),
\end{equation*}
so $\sigma_{\S}(Z_j)=0$ for almost every leaf.
For each positive-measure chart, Lemma~\ref{lem:3.5} and
\eqref{eq:3.12} give
\begin{equation*}
 \sigma_{\S}(Z_j)>0\text{ implies }
 \sigma_{\S}|_{Z_j}\sim\Hh^k|_{\S\cap Z_j}
\end{equation*}
outside a $\nu_0$-negligible family. Indeed, the restricted quotient
$\sigma_{\S}(Z_j)\dd\nu_0$ is equivalent to $d\nu_0$ on the family
where $\sigma_{\S}(Z_j)>0$.
Since the charts are countable, these conclusions hold simultaneously
outside one negligible family of leaves. Lemma~\ref{lem:3.3} and disintegration
also give $\sigma_{\S}(\ri\S)=1$ almost everywhere.
We henceforth retain only leaves in $\T^{k,0}$ satisfying all these
conclusions.

Fix a leaf outside the exceptional families. Some section $\S\cap Z_{j_0}$ has positive conditional mass. Choose a point $p$ in this section. For any $y\in\ri\S$, Proposition~\ref{prop:3.4} gives a chart section containing both $p$ and $y$. Its intersection with $\S\cap Z_{j_0}$ is nonempty and relatively open, hence has positive $\Hh^k$-measure and positive conditional mass. The local density formula therefore applies on this section. Any chart section containing $y$ overlaps it in a relatively open neighbourhood of $y$ and therefore also has positive conditional mass. Hence $\sigma_{\S}$ and $\Hh^k$ are equivalent on $\ri\S$. Since $\Hh^k(\S\setminus\ri\S)=0$, we have
\begin{equation*}
    d\sigma_{\S}=h^0_{\S}\dd\Hh^k|_\S,\quad
0<h^0_{\S}<\infty\quad\Hh^k\text{-almost everywhere}.
\end{equation*}
Define $g_{\S}=h^0_{\S}/w$ almost everywhere. On each chart section,
\begin{equation}\label{eq:3.14}
 g_{\S_a}(\Phi(a,q))
 =\frac{\sigma_{\S_a}(Z)}{W(a)J_a}f(a,q)
 \quad\text{for almost every }q\in C.
\end{equation}
Every nonempty chart section of a retained leaf has positive conditional
mass, by the overlap argument above. Hence \eqref{eq:3.12} transfers
the exceptional anchors in Lemma~\ref{lem:3.5} to one
$\nu_0$-negligible family of leaves, simultaneously for all charts.
The right-hand side of \eqref{eq:3.14} is a positive continuous
representative with locally Lipschitz logarithm and satisfies the
pointwise contraction inequality of that lemma. On an overlap,
two such representatives agree almost everywhere and therefore
everywhere by continuity. They consequently define a positive
function $g_{\S}$ on $\ri\S$ with
$\log g_{\S}\in\Lip_{\mathrm{loc}}(\ri\S)$.

The pair-covering property of Proposition~\ref{prop:3.4} and
\eqref{eq:3.10} give \eqref{eq:3.2} for $o,x\in\ri\S$.
For an arbitrary $o\in\S$, approximate $o$ by points of $\ri\S$.
If $x\in\ri\S$ and $t>0$, then $o+t(x-o)\in\ri\S$, so
continuity gives \eqref{eq:3.2} in its stated form.

All exceptional families may be chosen Borel. The local expressions
in \eqref{eq:3.14} are already jointly Borel and continuous in the
leaf coordinate. Choosing the first chart containing each point
therefore gives a jointly Borel representative on the relative
interiors of the retained leaves. No further regularisation is
needed. The normalisation \eqref{eq:3.1} follows from
$\sigma_{\S}(\S)=1$ and Lemma~\ref{lem:3.3}.

Finally, write $d\mu=(h/w)\dd\sigma$. Applying the disintegration
of $\sigma$ to the nonnegative Borel function $Hh/w$ and using
the density of $\sigma_{\S}$, we obtain
\begin{equation*}
 \int_{\R^n}H\dd\mu=\int_{\R^n}\frac{Hh}{w}\dd\sigma=\int_{\CC(\R^n)}
   \left(\int_{\S}\frac{Hh}{w}\dd\sigma_{\S}\right)
   \dd\nu_0(\S)=\int_{\CC(\R^n)}
   \left(\int_{\S}Hhg_{\S}\dd\Hh^{\dim\S}\right) \dd\nu_0(\S).
\end{equation*}
Taking $H=1$ gives
\begin{equation*}
 \int_{\CC(\R^n)}m_{\S}\dd\nu_0(\S)=\mu(\R^n)<\infty.
\end{equation*}
Lemma~\ref{lem:2.2} therefore identifies the quotient as
$d\nu=d\S_\#\mu=m_{\S}\dd\nu_0$ and gives the normalised conditionals
in \eqref{eq:3.3}, with $0<m_{\S}<\infty$ for $\nu$-almost every
leaf. Applying the same change-of-density formula to a second
positive auxiliary weight and using uniqueness of disintegration
proves the final assertion.
\end{proof}

\begin{corollary}[The geometric measure]\label{cor:3.7}
For almost every positive-dimensional leaf, the measure
\begin{equation*}
   d \lambda_{\S}=g_{\S}\dd\Hh^k|_\S
\end{equation*}
is locally finite, has support $\S$, and satisfies the homothety inequality
\begin{equation*}
    \lambda_{\S}(o+t(B-o))\ge t^n\lambda_{\S}(B)\text{ for }o\in\S,\ 0<t\le1
\end{equation*}
for every Borel $B\subset\S$. Thus it satisfies $\MCP(0,n)$ when $n>1$.
\end{corollary}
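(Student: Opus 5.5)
We take the leaves retained in Theorem~\ref{thm:3.1}, for which $g_{\S}$ is positive on $\ri\S$, has locally Lipschitz logarithm there, satisfies the contraction inequality \eqref{eq:3.2}, and has negligible values on $\partial\S$. We then read off the three assertions of the corollary in turn: local finiteness, support, and the homothety inequality.

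\emph{Local finiteness and support.} Since $\log g_{\S}$ is locally Lipschitz on $\ri\S$, the density $g_{\S}$ is locally bounded on $\ri\S$, so $\lambda_{\S}$ is finite on compact subsets of $\ri\S$. Compact sets reaching the relative boundary need \eqref{eq:3.2}. Fix $o\in\ri\S$ and a compact $K\subset\S$. For $x\in K\cap\ri\S$, write $x=o+t^{-1}(y-o)$ with $y$ in a fixed compact subset of $\ri\S$ near $o$ and $t\in(0,1]$ bounded below on $K$; for instance $t=\min\{1,r/R\}$, with $R$ a bound for $\norm{x-o}$ on $K$ and $r$ smaller than the relative distance from $o$ to $\partial\S$. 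Then \eqref{eq:3.2} gives $g_{\S}(x)\le t^{-(n-k)}g_{\S}(y)$, which is bounded. Together with $\Hh^k(\partial\S)=0$, this shows $\lambda_{\S}(K)<\infty$. The support equals $\S$ because $g_{\S}>0$ on the dense open set $\ri\S$, and $\S$ is closed.

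\emph{Homothety inequality.} Fix $o\in\S$ and $0<t\le1$. The map $H_{o,t}$ is an affine homothety of $\Aff\S$ with $k$-dimensional Jacobian $t^k$, and it sends $\ri\S$ into $\ri\S$ by convexity. Up to the $\Hh^k$-negligible boundary we may assume $B\subset\ri\S$. By the area formula,
\[
\lambda_{\S}(H_{o,t}(B))=\int_{H_{o,t}(B)}g_{\S}\dd\Hh^k=t^k\int_B g_{\S}(H_{o,t}(x))\dd\Hh^k(x).
\]
Inequality \eqref{eq:3.2} bounds the integrand below by $t^{n-k}g_{\S}(x)$, and the product of the two factors is $t^n\lambda_{\S}(B)$. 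Since $\beta^{(t)}_{0,n}=t^n$, this is exactly \eqref{eq:2.5} with $\kappa=0$ and $N=n$. The definition of $\MCP$ in Section~\ref{sec:geometry} is stated for finite measures, so we apply the locally finite version mentioned after \eqref{eq:2.6}: with full support, bounded source sets and monotone convergence suffice. This gives $\MCP(0,n)$ when $n>1$.

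The only step needing care is local finiteness near $\partial\S$. The contraction inequality \eqref{eq:3.2}, applied from an interior centre, is precisely what prevents $g_{\S}$ from blowing up at the boundary, so the argument above should handle it.
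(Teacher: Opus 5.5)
Your proof is correct and follows essentially the same route as the paper. Local finiteness comes from applying \eqref{eq:3.2} with a centre $o\in\ri\S$, which moves points near $\partial\S$ into a compact subset of $\ri\S$ where $g_{\S}$ is bounded by continuity; the paper uses the fixed ratio $t=1/2$ and the compact set $(o+C)/2\subset\ri\S$, while you use a small closed ball around $o$. Full support follows from positivity of $g_{\S}$ on the dense set $\ri\S$. The homothety inequality follows by multiplying \eqref{eq:3.2} by the tangential Jacobian $t^k$ and integrating, using that the relative boundary is $\Hh^k$-negligible.
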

\begin{proof}
Fix $o\in\ri\S$. On each compact $C\subset\S$, the set $(o+C)/2$ is a compact subset of $\ri\S$. Theorem~\ref{thm:3.1} gives $g_{\S}(x)\le2^{n-k}g_{\S}((o+x)/2)$ for $x\in C\cap\ri\S$. Continuity therefore bounds $g_{\S}$ on $C\cap\ri\S$, proving local finiteness. Positivity of $g_{\S}$ gives full support. Multiplying \eqref{eq:3.2} by the tangential Jacobian $t^k$ and integrating proves the inequality; relative boundaries are $\Hh^k$-negligible.
\end{proof}

\subsection{First-order estimates}
\begin{corollary}[Directional bounds and sharpness]\label{cor:3.8}
Let $\S$ be a $k$-dimensional leaf for which Theorem~\ref{thm:3.1} holds. Put $d=n-k$ and, for a unit vector $v\in V_{\S}$, define
\begin{equation*}
    \ell_\pm(x,v)=\sup\{s\ge0:x\pm sv\in\S\}.
\end{equation*}
At almost every $x\in\ri\S$, simultaneously for every unit $v\in V_{\S}$,
\begin{equation}\label{eq:3.15}
-\frac{d}{\ell_+(x,v)}\le D\log g_{\S}(x)(v)\le\frac{d}{\ell_-(x,v)},
\end{equation}
where $1/\infty=0$.
In particular,
\begin{equation}\label{eq:3.16}
\norm{D_{\S}\log g_{\S}(x)}\le\frac{d}{\dist(x,\partial\S)}.
\end{equation}
On a maximal chord contained in $\ri\mathcal{S}$, with affine coordinate interval $(a,b)$, one has for $a<r<s<b$
\begin{equation}\label{eq:3.17}
\left(\frac{b-s}{b-r}\right)^d\le\frac{g_{\S}(s)}{g_{\S}(r)}
\le\left(\frac{s-a}{r-a}\right)^d,
\end{equation}
where a ratio with an infinite endpoint is interpreted as $1$. The density is constant along every complete line contained in $\ri\S$, and is constant on every leaf that is an affine subspace. The exponent and the derivative constant are sharp in every positive codimension.
\end{corollary}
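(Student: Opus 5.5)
The whole corollary is a one-dimensional consequence of the homothety inequality \eqref{eq:3.2}, read along chords of $\S$, together with the positivity and local Lipschitz regularity of $\log g_{\S}$ from Theorem~\ref{thm:3.1}.

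\textbf{Chord comparison.} Fix a chord of $\ri\S$ with affine coordinate interval $(a,b)$ and put $\phi=\log g_{\S}$ along it. For $a<r<s<b$ with $a$ finite, take the centre $o$ to be the endpoint at coordinate $a$, which lies in $\S$ because $\S$ is closed. Take $x$ at coordinate $s$ and $t=(r-a)/(s-a)$. Then \eqref{eq:3.2} gives $g(r)\ge ((r-a)/(s-a))^d g(s)$, which is the upper bound in \eqref{eq:3.17}.

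Symmetrically, take the centre at $b$, $x$ at coordinate $r$ and $t=(b-s)/(b-r)$. This gives $g(s)\ge((b-s)/(b-r))^d g(r)$, which is the lower bound.

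If $a=-\infty$, use centres at coordinate $a'\to-\infty$ inside the chord. The ratio $((s-a')/(r-a'))^d$ tends to $1$, so the one-sided bound becomes $g(s)\le g(r)$; the case $b=+\infty$ is analogous. On a complete line both limits apply, so $g$ is constant along it. If the leaf is an affine subspace, every two points lie on a complete line in $\ri\S$, so $g$ is constant on the leaf.

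\textbf{Derivative bounds.} Since $\phi$ is locally Lipschitz on $\ri\S$, Rademacher's theorem gives differentiability at almost every $x\in\ri\S$, and there $D\phi(x)(v)$ is linear in $v$, hence simultaneously defined for every unit $v$. Fix such an $x$ and $v$, and apply \eqref{eq:3.17} on the chord through $x$ in direction $v$, which has $a=-\ell_-(x,v)$ and $b=\ell_+(x,v)$ in the coordinate centred at $x$.

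Taking $r=0$ and $s=h\downarrow0$ gives
\[
d\log\bigl(1-h/\ell_+\bigr)\le\phi(x+hv)-\phi(x)\le d\log\bigl(1+h/\ell_-\bigr).
\]
Dividing by $h$ yields \eqref{eq:3.15}, with infinite endpoints giving $0$.

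For \eqref{eq:3.16}, note that $\ell_\pm(x,v)\ge\dist(x,\partial\S)$, so $|D\phi(x)(v)|\le d/\dist(x,\partial\S)$ for all unit $v$. Taking the supremum over $v$ gives the norm bound.

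\textbf{Sharpness.} This is the main point needing an explicit construction, and I would reuse the radial example. Take $u(x)=(x_1,\dots,x_k,\norm{(x_{k+1},\dots,x_n)})$, a $1$-Lipschitz map into $\R^{k+1}$. Its leaves are the half-subspaces $\{(y,r\theta)\mid y\in\R^k,\ r\ge0\}$, one for each unit $\theta\in\R^{d}$, and they have dimension $k+1$ with codimension $d-1$.

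That is off by one, so instead I would use cones over $(k-1)$-dimensional data. Take $u(x)=(x_1,\dots,x_{k-1},\norm{(x_k,\dots,x_n)})$, whose leaves are $\R^{k-1}\times\{r\theta\mid r\ge0\}$ with $\theta\in S^{d}$. These have dimension $k$ and codimension $d$.

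For Lebesgue measure, polar coordinates in the last $d+1$ variables give $g_{\S}$ proportional to $r^d$, with $r$ the distance to the relative boundary. Then equality holds in the upper bound of \eqref{eq:3.17} with $a=0$, and $D\log g(v)=d/r=d/\ell_-$ for $v$ the radial direction, with $\ell_-=r$ and $\ell_+=\infty$. Hence the exponent and the derivative constant are attained.

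The case $k=n$ is vacuous. For $k=1$ the construction reduces to the radial decomposition from the introduction, with codimension $n-1$. The identification of $g_{\S}$ in these examples rests on the uniqueness assertion in Theorem~\ref{thm:3.1}, applied to the explicit disintegration from polar coordinates.
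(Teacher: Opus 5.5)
Your proof is correct and follows essentially the paper's route: the homothety inequality \eqref{eq:3.2} with centres at (or tending to) the chord endpoints, Rademacher's theorem for almost-everywhere differentiability, and the same cone example $u(z,y)=(\norm{z},y)$ on $\R^{d+1}\times\R^{k-1}$ with $g_{\S}\propto r^d$ for sharpness. The only difference is the order: you prove \eqref{eq:3.17} first and obtain \eqref{eq:3.15} as its infinitesimal limit, whereas the paper first differentiates \eqref{eq:3.2} at $t=1$ to get $D_{\S}\log g_{\S}(x)(x-o)\le d$ and then lets $o$ approach the chord endpoints.
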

\begin{proof}
At a differentiability point of $\log g_{\S}$, use \eqref{eq:3.2} to differentiate the logarithm of $g_{\S}$ at $t=1$ from the left:
\begin{equation*}
    D_{\S}\log g_{\S}(x)(x-o)\leq d\text{ for }o\in\S.
\end{equation*}
Letting $o$ approach the endpoints of the chord through $x$ in direction $v$ gives \eqref{eq:3.15}. Taking $o$ in the largest relative ball centred at $x$ gives \eqref{eq:3.16}. Apply \eqref{eq:3.2} with a centre approaching each endpoint of the chord to obtain \eqref{eq:3.17}. Passing to the limit when the chord is unbounded proves the remaining cases and the constancy assertions.

For sharpness, write $n=d+k$ with $d\ge1$, and consider
\[
u(z,y)=(\norm{z},y),\quad(z,y)\in\R^{d+1}\times\R^{k-1}.
\]
Its nontrivial leaves are $[0,\infty)\theta\times\R^{k-1}$ for $\theta\in\mathbb S^d$. Polar coordinates give $g_{\S}(r,y)=c_{\S}r^d$, $r\in [0,\infty), y\in\R^{k-1}$, with $c_{\S}>0$. Thus $D\log g_{\S}(r,y)(e_r)=\norm{D_{\S}\log g_{\S}(r,y)}=d/r$, attaining \eqref{eq:3.15} and \eqref{eq:3.16}.
\end{proof}

\section{Measure contraction and the dimension parameter}\label{sec:contraction}
We now combine the geometric estimate of Theorem~\ref{thm:3.1} with the ambient measure contraction inequality. The conditional measures satisfy the same measure contraction condition, with the same dimension parameter. Proposition~\ref{prop:4.3} shows that this parameter cannot, in general, be decreased.

\begin{theorem}[Measure contraction on isometric leaves]\label{thm:4.1}
Let $X\subset\R^n$ be closed and convex with nonempty interior, and let
\begin{equation*}
d\mu=h\dd\lambda|_ X,\quad0<h<\infty\quad\lambda\text{-almost everywhere on }X,
\end{equation*}
be finite. Suppose $(X,\norm{\cdot},\mu)$ satisfies $\MCP(\kappa,N)$, with $\kappa\in\R$, $1<N<\infty$ and $N\ge n$. Let $u:\R^n\to\R^m$ be $1$-Lipschitz, where $m\ge1$, and let $\nu$ and $(\mu_{\S})$ be the quotient measure and the conditional measures of its leaf disintegration. Then, for $\nu$-almost every $\S$,
\begin{equation*}
    \supp\mu_{\S}=\S\cap X,\quad(\S\cap X,\norm{\cdot},\mu_{\S})\text{ satisfies }\MCP(\kappa,N).
\end{equation*}
Moreover, $\mu_{\S}\sim\Hh^k|_{\S\cap X}$, where $k=\dim\S$.
\end{theorem}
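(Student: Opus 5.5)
We plan to combine the geometric disintegration of Theorem~\ref{thm:3.1} with the pointwise ambient inequality of Lemma~\ref{lem:2.1}. Extend $h$ by zero outside $X$ and apply Theorem~\ref{thm:3.1} with this density. For $\nu$-almost every leaf $\S$ of dimension $k$, this gives
\[
d\mu_{\S}=m_{\S}^{-1}hg_{\S}\dd\Hh^k|_{\S}, \qquad 0<m_{\S}<\infty .
\]
Zero-dimensional leaves are trivial, since a Dirac mass satisfies $\MCP$. The key pointwise inequality to establish is that, for $o\in\S\cap X$, $x\in\ri\S\cap X$ with $\norm{x-o}<R_{\kappa,N}$, and $0<t\le1$,
\[
t^k\,h(H_{o,t}x)g_{\S}(H_{o,t}x)\ge\beta^{(t)}_{\kappa,N}(\norm{x-o})\,h(x)g_{\S}(x).
\]
This follows by multiplying \eqref{eq:2.7}, which reads $h(H_{o,t}x)\ge t^{-n}\beta h(x)$, by \eqref{eq:3.2}, which reads $g_{\S}(H_{o,t}x)\ge t^{n-k}g_{\S}(x)$. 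Once the inequality holds for $\Hh^k$-almost every $x$, we integrate over a Borel set $B$. Since the homothety within the $k$-dimensional leaf has Jacobian $t^k$, the result is
\[
\mu_{\S}(H_{o,t}B)\ge\int_B\beta^{(t)}_{\kappa,N}(\norm{x-o})\dd\mu_{\S}(x),
\]
which is exactly \eqref{eq:2.5}.

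The main obstacle is that Lemma~\ref{lem:2.1} holds only at pairs with both $x$ and $H_{o,t}x$ in the Lebesgue set $L_h$. A leaf is a $\lambda$-null set, so a priori $\S\cap L_h$ could be empty, and each pair $(o,t)$ gives its own exceptional set. To handle this, first restrict to leaves with $\Hh^k(\S\cap(\Omega\setminus L_h))=0$. This holds for $\nu_0$-almost every leaf, because $\lambda(\Omega\setminus L_h)=0$ and, by the disintegration of $\sigma$ with conditional densities $wg_{\S}$ that are positive on $\ri\S$, the set $\Omega\setminus L_h$ has zero $\sigma_{\S}$-mass, hence zero $\Hh^k$-mass, on almost every leaf. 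For such a leaf and a fixed $o$ and $t$, the set of $x\in\S$ with $x\notin L_h$ or $H_{o,t}x\notin L_h$ is $\Hh^k$-null, since $H_{o,t}$ restricted to $\Aff\S$ is a homothety and therefore preserves $\Hh^k$-null sets. The pointwise inequality therefore holds $\Hh^k$-almost everywhere on $B$ for every $o\in\S\cap X$ and every $t$ simultaneously. This is what the theorem requires: a single exceptional family of leaves, independent of $o$ and $t$. The relative boundary of $\S$ and $\partial X$ are $\Hh^k$-negligible on the leaf (for $\partial X$, use the same argument with $\lambda(\partial X)=0$), so restricting to $\ri\S\cap\Omega$ loses nothing.

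The remaining points concern supports and measure classes. Since $g_{\S}>0$ on $\ri\S$ and $h>0$ $\Hh^k$-almost everywhere on $\S\cap\Omega$ for the retained leaves, we get $\mu_{\S}\sim\Hh^k|_{\S\cap X}$. The support is $\cl(\ri\S\cap\Omega)$, which equals $\S\cap X$ once we know that $\ri\S\cap\Omega\neq\emptyset$. This nonemptiness holds for almost every leaf because $\mu_{\S}(\S)=1$ and $\mu(\partial X)=0$. Convexity of $\S\cap X$ makes the characterisation \eqref{eq:2.5} applicable, and centres $o$ range over the support $\S\cap X$. For $\kappa>0$, the source sets $B$ lie in $B(o,R_{\kappa,N})$, which matches the hypothesis of Lemma~\ref{lem:2.1}. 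Finally, the condition $N\ge n$ enters only through the ambient $\MCP$ assumption, which already encodes it, so no additional argument is needed there.
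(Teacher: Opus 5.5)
Your proposal is correct and follows essentially the same route as the paper. You apply Theorem~\ref{thm:3.1} to $h\mathbf{1}_X$ and discard one $\nu_0$-null, hence $\nu$-null, family of leaves, on whose complement $\Omega\setminus L_h$, $\partial X$ and $\partial\S$ are $\Hh^k$-negligible; you then multiply \eqref{eq:2.7} by \eqref{eq:3.2} and integrate using the tangential Jacobian $t^k$. One step is used only implicitly, and the paper states it: for $o\in\S\cap X$ and $0<t\le1$, the homothety $H_{o,t}$ maps $\ri\S\cap\Omega$ into itself, which is what makes the Lebesgue-point condition available at the image point.
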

\begin{proof}
Let $\Omega=\ri X$, and apply Theorem~\ref{thm:3.1} to the density $h\mathbf{1}_X$. The sets $\partial X$ and $\Omega\setminus L_h$ have Lebesgue measure zero. Disintegration of the positive auxiliary measure $d\sigma=w\dd\lambda$ in that theorem shows that their intersections with almost every leaf have the corresponding Hausdorff measure zero. The same family of leaves has full $\nu$-measure, since $\nu=\S_\#\mu\ll\nu_0$. Consequently, for $\nu$-almost every positive-dimensional leaf, $\S\cap\Omega\ne\emptyset$ and
\begin{equation*}
Y=\S\cap X,\quad I=\ri\S\cap\Omega=\ri Y,\quad
 d\mu_{\S}=h_{\S}\dd\Hh^k|_Y,\quad
h_{\S}=\frac{hg_{\S}}{\int_Y hg_{\S}\dd\Hh^k},
\end{equation*}
with $h_{\S}>0$ almost everywhere on $I$ and $\Hh^k(I\setminus L_h)=0$. The convex set $I$ is dense in $Y$, and $Y\setminus I$ has $\Hh^k$-measure zero. This proves both $\supp\mu_{\S}=Y$ and the asserted equivalence of measures.

All exceptional leaves have now been removed, independently of the centre and the contraction parameter. Fix a remaining leaf, $o\in Y$ and $0<t<1$. The homothety $H_{o,t}$ maps $I$ into $I$, and its restriction to $\Aff\S$ is nonsingular. Therefore both $x$ and $H_{o,t}(x)$ belong to $L_h$ for $\Hh^k$-almost every $x\in I$. Theorem~\ref{thm:3.1} applies at every point of $I$, while Lemma~\ref{lem:2.1} applies whenever both endpoints belong to $L_h$. Together they give
\begin{equation*}
h_{\S}(H_{o,t}(x))\ge t^{n-k}\frac{h(H_{o,t}(x))}{h(x)}h_{\S}(x)
\ge t^{-k}\beta_{\kappa,N}^{(t)}(\norm{x-o})h_{\S}(x)
\end{equation*}
for almost every $x\in I\cap B(o,R_{\kappa,N})$. Since the $k$-dimensional Jacobian of $H_{o,t}$ is $t^k$, integration yields \eqref{eq:2.5} for every Borel set in $ I\cap B(o,R_{\kappa,N})$. Both the relative boundary and its homothetic image are $\Hh^k$-negligible, so the assertion extends to sets in $Y\cap B(o,R_{\kappa,N})$. The case $t=1$ is immediate. A zero-dimensional conditional is a Dirac mass and satisfies the same assertion.
\end{proof}

\begin{proposition}[Optimality of the inherited dimension]\label{prop:4.3}
For every $2\le n\le N<\infty$ there is a weighted $\CD(0,N)$ measure on a closed Euclidean ball, with positive smooth density on its interior, and a scalar $1$-Lipschitz map whose nontrivial conditional measures fail $\MCP(0,N')$ for every $1<N'<N$.
\end{proposition}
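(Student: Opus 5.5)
The plan is to use the radial decomposition from the introduction, with a weight chosen to push the dimension up to $N$. Take the closed unit ball $X=\cl B(0,1)\subset\R^n$ and the scalar $1$-Lipschitz map $u(x)=\norm{x}$. Its nontrivial leaves are the rays $[0,\infty)\theta$, $\theta\in\mathbb S^{n-1}$, so the leaf sections are the radii $[0,1]\theta$. For the ambient measure take
\[
d\mu=(1-\norm{x}^2)^{N-n}\dd\lambda|_X ,
\]
or Lebesgue measure itself when $N=n$.

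\textbf{Ambient condition.} I will verify $\CD(0,N)$ through the Bakry--\'Emery characterisation recalled in Section~\ref{sec:curvature}. When $N=n$ this is the unweighted ball. When $N>n$ the density is $h=\phi^{N-n}$ with $\phi(x)=1-\norm{x}^2$, which is concave and positive on the interior. A density of this form gives $\CD(0,N)$ on a convex domain exactly when $h^{1/(N-n)}$ is concave, which holds here. The density is smooth and positive on the open ball, as the statement requires.

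\textbf{Conditionals.} Polar coordinates, or Theorem~\ref{thm:3.1} together with the sharpness example of Corollary~\ref{cor:3.8}, give the conditional density on each radius:
\[
d\mu_\theta(r)=c\,r^{n-1}(1-r^2)^{N-n}\dd r,\qquad 0<r<1 .
\]
Since $u$ is scalar and all nontrivial leaves are one-dimensional, every conditional has this form.

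\textbf{Failure of $\MCP(0,N')$ for $N'<N$.} This is the main step. In the form \eqref{eq:2.5}, fix $N'<N$ and test the condition with a centre and a set near the endpoint $r=1$. Take the centre $o=0$ and $E=[1-\delta,1]\theta$. The image $H_{o,t}(E)=[t(1-\delta),t]\theta$ has mass comparable to $(1-t)^{N-n}\delta$ when $t$ is close to $1$. The source set has mass comparable to $\delta^{N-n+1}$, so this choice does not give a contradiction directly.

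I will instead contract toward the endpoint $o=\theta$ itself, parametrising by $s=1-r$, so that the density is $\sim s^{N-n}$ near $s=0$. Contracting $E=\{s\le\epsilon\}$ toward $s=0$ by the factor $t$ sends it to $\{s\le t\epsilon\}$. Comparing the masses gives
\[
\mu_\theta\bigl(H_{o,t}(E)\bigr)\approx t^{N-n+1}\mu_\theta(E)\quad\text{as }\epsilon\to0 .
\]
This only shows that the effective dimension is $N-n+1$ near the endpoint, which is not good enough. Near $r=0$ the factor $r^{n-1}$ gives dimension $n$, which is also too weak.

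The sharp choice is to contract toward $o=0$ a set near $r=1$ in the limit $t\to0$. Take $E=[1/2,1]\theta$. Then
\[
\mu_\theta\bigl(H_{0,t}(E)\bigr)\approx t^{n}\int_{1/2}^{1}r^{n-1}\dd r ,
\]
because $(1-t^2r^2)^{N-n}\to1$. The requirement $\ge t^{N'}\mu_\theta(E)$ is then satisfied for small $t$ whenever $N'\ge n$, so this test also fails to exclude $N'<N$.

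So the simple weight fails, and the obstacle is to realise the exponent $N$ genuinely along the leaves. My fallback is a cone-type weight that produces conditionals $\propto r^{N-1}$ on $[0,1]$. These fail $\MCP(0,N')$ for $N'<N$, by taking $o=0$, $E=[0,1]$ and $t\to0$, which gives $t^N<t^{N'}$.

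To get such conditionals I will take the density $h(x)=\norm{x}^{N-n}$. Its root $h^{1/(N-n)}=\norm{x}$ is convex rather than concave, so it does not satisfy $\CD(0,N)$ directly. To repair this I will centre the radial map at a boundary point $o$ of the ball and use the density $h(x)=\ip{x-o}{e}^{N-n}$, where $e$ is the inner normal at $o$. This density is a power of an affine function that is positive on the interior, so it satisfies $\CD(0,N)$.

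The leaves of $u(x)=\norm{x-o}$ are chords from $o$. Along a chord in direction $\theta$ the conditional density is
\[
\propto r^{n-1}\bigl(r\ip{\theta}{e}\bigr)^{N-n}\propto r^{N-1}\quad\text{on }[0,\ell_\theta].
\]
The $t\to0$ test at $o$ then excludes every $N'<N$. The remaining checks, which I expect to be routine, are positivity and smoothness of the density on the open ball and the identification of the conditionals via Theorem~\ref{thm:3.1}.
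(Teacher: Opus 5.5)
Your final construction is correct and is the paper's proof. It uses the radial map centred at a boundary point $o$ with density $\ip{x-o}{e}^{N-n}$, whose $N$-Bakry--\'Emery tensor vanishes; the ray conditionals are then proportional to $r^{N-1}$, so contraction toward the admissible centre $o$ scales mass by exactly $t^N<t^{N'}$. The paper takes $o=-e_1$ and $h=c(1+x_1)^{N-n}$, and your exploratory attempts with $(1-\norm{x}^2)^{N-n}$ and the centred map can simply be discarded.
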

\begin{proof}
Let $X=\overline{B(0,1)}$ and $p=-e_1$. Choose $c>0$ so that the density
\[
h(x)=\begin{cases}
c(1+x_1)^{N-n},&N>n,\\
c,&N=n.
\end{cases}
\]
has integral one on $X$. It is positive and smooth in $\ri X$. For $N>n$, $\rho(x)=-\log c-(N-n)\log(1+x_1)$ satisfies
\begin{equation*}
D^2\rho-\frac{D\rho\otimes D\rho}{N-n}=0.
\end{equation*}
By the characterisation recalled in Section~\ref{sec:curvature}, $h\dd\lambda|_X$ satisfies $\CD(0,N)$; the case $N=n$ is the unweighted ball. Alternatively, the affine inequality $1+(H_{o,t}(x))_1\ge t(1+x_1)$ verifies $\MCP(0,N)$ directly.

The leaves of $u(\cdot)=\norm{\cdot-p}$ are the closed rays issuing from $p$. For $\theta\in\mathbb S^{n-1}$ with $\theta_1>0$, write $x=p+r\theta$, $0<r<2\theta_1$. Polar coordinates give
\begin{equation*}
h(x)\dd\lambda(x)=c\,a(\theta)r^{N-1}\dd r\dd\Hh^{n-1}(\theta),\qquad
 a(\theta)=\begin{cases}\theta_1^{N-n},&N>n,\\1,&N=n.\end{cases}
\end{equation*}
Hence the conditional on the ray is
\begin{equation*}
d\mu_\theta(r)=\frac{N}{(2\theta_1)^N}r^{N-1}\dd r.
\end{equation*}
Contraction towards the endpoint $r=0$ multiplies the mass of every source set by exactly $t^N$. Since $t^N<t^{N'}$ for $0<t<1$ and $N'<N$, \eqref{eq:2.5} cannot hold with $N'$. The failure persists if the centre is required to lie in the relative interior. Indeed, for a source near $r>0$ and centre $\epsilon>0$, the necessary density inequality is
\begin{equation*}
t\bigl(\epsilon+t(r-\epsilon)\bigr)^{N-1}\ge t^{N'}r^{N-1},
\end{equation*}
which fails for all sufficiently small $\epsilon$.
\end{proof}

\section{Monotone fibres and convex faces}\label{sec:monotone}
In this section we prove that the measure-contraction property is inherited by the conditional measures associated with the fibres of a maximal monotone relation. We first realise these fibres as translates of the leaves of its resolvent. We then prove convergence of the corresponding conditional measures in total variation as the resolvent parameter tends to zero. The result applies, in particular, to subdifferentials of extended-valued convex functions.

\subsection{Maximal extensions and resolvents}
We identify a relation $A:\R^n\rightrightarrows\R^n$ with its graph. It is monotone if
\begin{equation*}
    \ip{x-y}{p-q}\ge0\text{ for all }(x,p),(y,q)\in A.
\end{equation*}
A monotone relation is maximal if its graph is not properly contained in the graph of another monotone relation. Let us recall that the graph of a maximal monotone relation is closed, its values and inverse values are closed and convex, and its resolvents are everywhere defined. We refer to \cite{r1,r4} for these facts and for Minty's theorem.

The domain of a relation $A:\R^n\rightrightarrows\R^n$ is
\begin{equation*}
 \dom A=\{x\in\R^n\mid A(x)\ne\emptyset\}.
\end{equation*}
Thus $A$ may have empty values outside its domain, even though
its ambient source space is $\R^n$.

The following lemma collects standard facts about monotone operators
and Borel selections (see \cite[Corollary~1.5 and Theorem~2.2]{r1}
and \cite[Theorem~18.18]{r23}); we include a proof for completeness.

\begin{lemma}\label{lem:5.1}
A monotone map $T:\R^n\to\R^n$ defined on all of $\R^n$ has a unique maximal monotone extension. For any maximal monotone relation $A$, the multivalued locus
\begin{equation*}
M_A=\{x\mid\#A(x)>1\}
\end{equation*}
is Borel and $\lambda$-negligible. The domain of $A$ is Borel and admits a Borel selection. Consequently, all selections agree $\lambda$-almost everywhere on the domain.
\end{lemma}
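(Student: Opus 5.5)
The lemma has four assertions, and I would prove them in the order stated, using standard monotone-operator theory together with the leaf machinery of Section~\ref{sec:geometry}.

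\textbf{Unique maximal extension.} Let $T:\R^n\to\R^n$ be monotone and everywhere defined. A monotone map on all of $\R^n$ is locally bounded and hemicontinuous. Local boundedness holds because monotone relations are locally bounded at interior points of their domain. Hemicontinuity follows, after a short argument, from local boundedness and monotonicity along segments. I would then define
\begin{equation*}
\bar T(x)=\bigcap_{r>0}\cl\Conv T(B(x,r)).
\end{equation*}
This $\bar T$ is the graph closure of $T$ followed by convexification. I would check that $\bar T$ is monotone; this uses only that monotonicity is preserved under graph limits and convex combinations in the second variable. Maximality and uniqueness I would obtain by a Minty-type argument. If $(x,p)$ is monotonically related to the whole graph of $T$, test against the points $y=x+sv$, divide by $s>0$, and let $s\downarrow0$. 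This gives $\ip{p-q}{v}\le0$ for every cluster value $q$ of $T(x+sv)$ and every direction $v$. A separation argument then forces $p\in\bar T(x)$. Consequently every maximal monotone extension has graph contained in $\{(x,p)\mid (x,p)\text{ is monotonically related to }T\}$, which equals the graph of $\bar T$. So the extension is unique and equals $\bar T$.

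\textbf{The multivalued locus $M_A$ is Borel and negligible.} The key step is to use the resolvent $J=(\Id+A)^{-1}$, which is $1$-Lipschitz by Minty's theorem. The relation $A(x)$ is multivalued exactly when $J^{-1}(x)=x+A(x)$ contains more than one point. Firm nonexpansiveness shows that $J$ is an isometry on every segment of $x+A(x)$, since $J$ is constant on it. More to the point, $x\in M_A$ means that $x$ lies in a face of dimension at least one of the graph of $A$. I expect the cleanest proof of negligibility to go through the Cayley transform $R=2J-\Id$, which is nonexpansive, and Rademacher's theorem. At a point $z$ where $J$ is differentiable with $x=J(z)$, every point $z'\in J^{-1}(x)$ near $z$ satisfies $J(z')=J(z)$. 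Then $DJ(z)$ has a kernel containing the segment direction.

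This does not immediately give negligibility of $J(\cdot)$ of those points, so I would instead follow Alberti--Ambrosio \cite{r1}. The graph of $A$ is the image of $\R^n$ under the Lipschitz homeomorphism $z\mapsto(J z,z-Jz)$ (rotated coordinates). Therefore $M_A=\pi_1(\{\text{graph points lying on a nontrivial segment of a fibre}\})$. The set of $z$ whose $J$-fibre is nondegenerate is exactly the union of the leaves of $J$ of positive dimension, with $J$ constant on them. On these leaves the equality defect argument shows that $DJ$ has nontrivial kernel wherever it exists. By the area formula, $\lambda(J(\{z\mid \operatorname{rank}DJ(z)<n\}))=0$, and Rademacher covers the remainder. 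Hence $\lambda(M_A)=0$. Borelness follows because $M_A$ is the projection of the closed set $\{(x,p,q)\mid p,q\in A(x),\ \norm{p-q}\ge1/j\}$ taken over $j$. These sets have $\sigma$-compact sections, so \cite[Theorem~18.18]{r23} applies.

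\textbf{Domain and Borel selection.} Again by \cite[Theorem~18.18]{r23}, the closed graph has closed, hence $\sigma$-compact, sections. So $\dom A$, being its projection, is Borel and admits a Borel uniformisation, which is a selection. Two selections differ only on $M_A$, which is negligible. The main obstacle I expect is the negligibility of $M_A$; the area-formula route through $J$ is the step I would verify most carefully.
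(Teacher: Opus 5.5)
Your proposal is correct in substance. Its treatment of Borelness and of the Borel selection matches the paper's, but the two substantive steps follow different routes.

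\textbf{Uniqueness of the maximal extension.} The paper never identifies the extension. Given maximal extensions $A_1,A_2$ and $(x,p)\in A_1$, $(y,q)\in A_2$, it tests both against $(z_t,T(z_t))$ with $z_t=(1-t)x+ty$. The inequalities $\ip{p-T(z_t)}{x-y}\ge0$ and $\ip{T(z_t)-q}{x-y}\ge0$ add up to $\ip{p-q}{x-y}\ge0$. So $A_1\cup A_2$ is monotone and $A_1=A_2$, with existence by Zorn. No limits, local boundedness or separation are needed. Your closure--convexification argument depends on Rockafellar's local boundedness theorem. You need it for the cluster values, and also for monotonicity of $\bar T$: there the convex combinations involve different base points in $B(x,r)$, and the error must be controlled as $r\to0$. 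Hemicontinuity is never used. In return you get an explicit formula for the extension and avoid Zorn.

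\textbf{Negligibility of $M_A$.} The paper is elementary. For a coordinate direction $e$, the projections of $A(y+se)$ onto $\R e$ are intervals ordered monotonically in $s$. Hence only countably many are nondegenerate on each line, and Fubini in each of the $n$ coordinate directions concludes. Your resolvent route uses heavier tools (Minty, Rademacher, the area formula), but it also works and fits the resolvent picture of Section~\ref{sec:monotone}.

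However, the middle of your negligibility argument conflates the fibres and the leaves of $J=(\Id+A)^{-1}$, and those sentences are false.
The nondegenerate fibres of $J$ are the sets $x+A(x)$, $x\in M_A$, on which $J$ is constant, not an isometry. By Lemma~\ref{lem:5.2}, the isometric leaves are $p+A^{-1}(p)$, on which $J$ is the translation $z\mapsto z-p$. For example, $A\equiv0$ gives $J=\Id$: a single leaf $\R^n$, no nondegenerate fibre, and $DJ=\Id$. So the equality defect yields no kernel.

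Replace that passage by the argument you already gave. If $x\in M_A$ and $J$ is differentiable at some $z\in x+A(x)$, pick $z'\ne z$ in this convex fibre. Then $DJ(z)(z'-z)=0$, so $\det DJ(z)=0$. Hence
\[
M_A\subseteq J(N(J))\cup J\bigl(\{z\notin N(J)\mid \det DJ(z)=0\}\bigr).
\]
The first set is null because Lipschitz maps send null sets to null sets; Rademacher alone only gives $\lambda(N(J))=0$. The second set is null by the area formula. The graph parametrisation is then unnecessary.
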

\begin{proof}
Existence of a maximal extension follows from the Kuratowski--Zorn lemma. Let $A_1,A_2$ be two such extensions, take $(x,p)\in A_1$, $(y,q)\in A_2$, and set $z_t=(1-t)x+ty$, where $0<t<1$. Monotonicity with $(z_t,T(z_t))$ gives
\begin{equation*}
\ip{p-T(z_t)}{x-y}\ge0,\quad\ip{T(z_t)-q}{x-y}\ge0.
\end{equation*}
Their sum shows that $A_1\cup A_2$ is monotone. Maximality implies $A_1=A_2$.

Now let $A$ be any maximal monotone relation. The projection of its graph restricted to a compact ball is compact; hence its domain $\dom A$ is a countable union of compact sets. The same argument, applied to
\begin{equation*}
    \{(x,p,q)\in(\R^n)^3\mid(x,p),(x,q)\in A,\ \norm{p-q}\ge 1/j,\ \norm{x},\norm{p},\norm{q}\le m\},
\end{equation*}
shows that $M_A$ is Borel. Replacing $\norm{p-q}$ by $|\ip{p-q}{e}|$ proves the same assertion for the locus on which the projection of $A(x)$ onto a fixed coordinate direction $e$ is multivalued. Fix such a direction and write $x=y+se$, with $y\perp e$. The sets $A(x)$, $x\in \R^n$, for a maximal monotone relation $A$ are convex, and therefore the sets
\begin{equation*}
I_{y,s}=\{\ip{p}{e}\mid p\in A(y+se)\}
\end{equation*}
are intervals, possibly empty or unbounded. Monotonicity gives $\sup I_{y,s}\le\inf I_{y,t}$ whenever $s<t$ and both intervals are nonempty. For fixed $y$, the nondegenerate intervals therefore have pairwise disjoint interiors and are thus countable. Fubini's theorem shows that the projection of $A(x)$ onto $\R e$ is a singleton for almost every $x$ in the domain. The finitely many coordinate directions give $\lambda(M_A)=0$.

Finally, the sections of the closed graph over its domain are nonempty and closed, hence $\sigma$-compact. The Borel uniformisation theorem \cite[Theorem~18.18]{r23} supplies a Borel selection. One may also take the unique element of smallest norm in each nonempty closed convex value. Since $\lambda(M_A)=0$, all selections agree almost everywhere.
\end{proof}

For $\epsilon>0$, we denote the resolvent and the Yosida approximation by
\begin{equation*}
J_\epsilon=(\Id+\epsilon A)^{-1},\quad
A_\epsilon=\epsilon^{-1}(\Id-J_\epsilon).  
\end{equation*}

The resolvent $J_\epsilon$ is everywhere defined and firmly nonexpansive, see \cite[Corollary~23.9]{r4}. 
In particular, $J_\epsilon$ is $1$-Lipschitz by the
Cauchy--Schwarz inequality.

\begin{lemma}\label{lem:5.2}
Let $A:\R^n\rightrightarrows\R^n$ be maximal monotone and let
$\epsilon>0$. The isometric leaves of $J_\epsilon$ are precisely
\begin{equation*}
 \S_{\epsilon,p}=\epsilon p+A^{-1}(p)
 \text{ for }p\in\ran A.
\end{equation*}
These are exactly the nonempty fibres of the Lipschitz map
$A_\epsilon=\epsilon^{-1}(\Id-J_\epsilon)$.
\end{lemma}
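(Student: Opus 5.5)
The plan is to compute the equality defect of $J_\epsilon$ explicitly in terms of the graph of $A$, and then read off both descriptions of the leaves. Every $z\in\R^n$ can be written uniquely as $z=x+\epsilon p$ with $(x,p)\in A$, namely $x=J_\epsilon z$ and $p=A_\epsilon z$; this is Minty's theorem. For $z=x+\epsilon p$ and $z'=y+\epsilon q$ with $(x,p),(y,q)\in A$, expanding gives
\begin{equation*}
c_{J_\epsilon}(z,z')=\norm{z-z'}^2-\norm{x-y}^2=2\epsilon\ip{x-y}{p-q}+\epsilon^2\norm{p-q}^2 .
\end{equation*}
Both terms on the right are nonnegative, by monotonicity. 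Hence $c_{J_\epsilon}(z,z')=0$ if and only if $p=q$.

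From this identity, a set $E\subset\R^n$ is an isometry set of $J_\epsilon$ exactly when $A_\epsilon$ is constant on $E$.

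\begin{itemize}
\item If $A_\epsilon\equiv p$ on $E$, then every point of $E$ has the form $x+\epsilon p$ with $x\in A^{-1}(p)$. So $E\subset\epsilon p+A^{-1}(p)$.
\item Conversely, $J_\epsilon(x+\epsilon p)=x$ for every $x\in A^{-1}(p)$. Therefore $J_\epsilon$ acts on $\epsilon p+A^{-1}(p)$ as translation by $-\epsilon p$, which is an isometry.
\end{itemize}

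It follows that the maximal isometry sets are the full fibres $A_\epsilon^{-1}(p)=\epsilon p+A^{-1}(p)$, where $p$ ranges over $\ran A$ (equivalently over $\ran A_\epsilon$). Each such fibre is nonempty precisely when $p\in\ran A$.

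One point needs care. The definition of a leaf is maximality among isometry sets, and any isometry set must lie in a single fibre of $A_\epsilon$; this is the pairwise statement above. So maximality within a fibre is the same as global maximality. Distinct $p$ give disjoint fibres, so no leaf can straddle two of them.

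I expect no real obstacle here. The only subtle step is the uniqueness of the decomposition $z=x+\epsilon p$, which is exactly single-valuedness and everywhere-definedness of $J_\epsilon$. Once that is in place, the sign argument on the two nonnegative terms is immediate.
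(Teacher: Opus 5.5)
Your proposal is correct and follows essentially the same route as the paper: the Minty decomposition $z=J_\epsilon z+\epsilon A_\epsilon z$ and the expansion of $\norm{z-z'}^2$ with monotonicity show that $J_\epsilon$ preserves the distance between $z,z'$ if and only if $A_\epsilon z=A_\epsilon z'$. The resolvent identity then identifies the fibres of $A_\epsilon$ with $\epsilon p+A^{-1}(p)$. Your explicit maximality check, and the remark that the same expansion gives $\epsilon\norm{A_\epsilon z-A_\epsilon z'}\le\norm{z-z'}$ and hence the Lipschitz property, are both fine.
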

\begin{proof}
For $z,w\in\R^n$, put
$x=J_\epsilon z$, $y=J_\epsilon w$,
$p=A_\epsilon z$ and $q=A_\epsilon w$.
Then $p\in A(x)$, $q\in A(y)$, and monotonicity gives
\begin{equation*}
\norm{z-w}^2
=\norm{x-y}^2+2\epsilon\ip{x-y}{p-q}
   +\epsilon^2\norm{p-q}^2
\ge\norm{x-y}^2+\epsilon^2\norm{p-q}^2.
\end{equation*}
Hence $J_\epsilon$ preserves the distance between $z$ and $w$
if and only if $A_\epsilon z=A_\epsilon w$.
Its leaves are therefore precisely the nonempty fibres of
$A_\epsilon$. Finally, the resolvent identity gives
\begin{equation*}
A_\epsilon z=p
\text{ if and only if }z-\epsilon p\in A^{-1}(p),
\end{equation*}
which proves the assertion.
\end{proof}

\subsection{Total variation under disintegration}
For a finite signed measure $\sigma$ on $Z$, we write $\norm{\sigma}_{\TV}=|\sigma|(Z)$. Conditional probability kernels are chosen as everywhere-defined Borel versions, using a fixed probability measure on the exceptional Borel sets where disintegration does not determine them.

\begin{lemma}[Total variation and conditional measures]\label{lem:5.3}
Let $Z,P$ be Polish spaces, let $q:Z\to P$ be Borel, and let $\xi_0,\xi_1$ be finite nonnegative measures. Write for $i=0,1$
\begin{equation*}
    \xi_i=\int_P\nu_{i,p}\dd\theta_i(p),\qquad\theta_i=q_\#\xi_i.
\end{equation*}
If $\vartheta$ dominates both quotient measures and $a_i=d\theta_i/d\vartheta$, $i=0,1$, then
\begin{equation}\label{eq:5.1}
\norm{\xi_1-\xi_0}_{\TV}=\int_P\norm{a_1(p)\nu_{1,p}-a_0(p)\nu_{0,p}}_{\TV}\dd\vartheta(p).
\end{equation}
In particular,
\begin{equation}\label{eq:5.2}
\int_P\norm{\nu_{1,p}-\nu_{0,p}}_{\TV}\dd\theta_0(p)
\le\norm{\xi_1-\xi_0}_{\TV}+\norm{\theta_1-\theta_0}_{\TV}
\le2\norm{\xi_1-\xi_0}_{\TV}.
\end{equation}
\end{lemma}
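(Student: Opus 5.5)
The identity \eqref{eq:5.1} follows by writing both measures over the common dominating quotient and computing the total variation fibrewise; the inequality \eqref{eq:5.2} then follows from \eqref{eq:5.1} and the triangle inequality.

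\textbf{Step 1: common representation.} Since $\theta_i=a_i\vartheta$, we have
\begin{equation*}
\xi_i=\int_P a_i(p)\nu_{i,p}\dd\vartheta(p),\qquad
\xi_1-\xi_0=\int_P\kappa_p\dd\vartheta(p),\qquad \kappa_p=a_1(p)\nu_{1,p}-a_0(p)\nu_{0,p}.
\end{equation*}
Each $\kappa_p$ is concentrated on $q^{-1}(p)$ for $\vartheta$-almost every $p$. This holds because each $\nu_{i,p}$ is concentrated on its fibre $\theta_i$-almost everywhere, and on the set where $a_i=0$ the term vanishes.

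\textbf{Step 2: the identity \eqref{eq:5.1}.} The inequality ``$\le$'' is immediate: for every Borel $E$,
\[
|\xi_1-\xi_0|(E)\le\int|\kappa_p|(E)\dd\vartheta .
\]
For the reverse inequality, let $Z=Z^+\cup Z^-$ be a Hahn decomposition of $\xi_1-\xi_0$. Put $\eta=\xi_0+\xi_1$, with disintegration $\eta=\int\eta_p\dd\bar\theta$ over $q$. Then $\xi_i=\phi_i\eta$, and Lemma~\ref{lem:2.2} shows that $a_i(p)\nu_{i,p}$ is, up to the density $d\bar\theta/d\vartheta$, equal to $\phi_i\eta_p$ for $\vartheta$-almost every $p$. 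Consequently $\kappa_p$ has density $(\phi_1-\phi_0)$ with respect to a common fibre measure. Its Hahn decomposition is therefore $\{\phi_1\ge\phi_0\}$, the same set for all $p$. Integrating $|\phi_1-\phi_0|$ fibrewise then gives equality.

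The main obstacle is carrying out this uniqueness-of-disintegration identification cleanly, so that a single Borel Hahn set works simultaneously for almost every fibre. One may take $\vartheta=\theta_0+\theta_1$ without loss of generality, since the integrand scales correctly under a change of dominating measure.

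\textbf{Step 3: the inequality \eqref{eq:5.2}.} Write
\[
a_0(\nu_{1,p}-\nu_{0,p})=\kappa_p-(a_1-a_0)\nu_{1,p}.
\]
Taking total variation gives
\[
\norm{\nu_{1,p}-\nu_{0,p}}_{\TV}\,a_0\le\norm{\kappa_p}_{\TV}+|a_1-a_0|,
\]
using $\nu_{1,p}(Z)=1$. Integrating against $\vartheta$ and applying \eqref{eq:5.1} yields the first inequality, since
\[
\int|a_1-a_0|\dd\vartheta=\norm{\theta_1-\theta_0}_{\TV}.
\]
The second inequality is the contraction of total variation under pushforward:
\[
\norm{q_\#(\xi_1-\xi_0)}_{\TV}\le\norm{\xi_1-\xi_0}_{\TV}.
\]
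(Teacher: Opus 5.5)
Your proposal is correct and is essentially the paper's proof. Both disintegrate $\xi_0+\xi_1$ and use uniqueness of disintegration (Lemma~\ref{lem:2.2}) to identify $a_i(p)\nu_{i,p}=\frac{d\bar\theta}{d\vartheta}(p)\,\phi_i\,\eta_p$. Both then integrate $|\phi_1-\phi_0|$ fibrewise to get \eqref{eq:5.1}, and prove \eqref{eq:5.2} with the same pointwise triangle inequality and contraction of total variation under $q_\#$.

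The Hahn-decomposition framing is unnecessary once the common fibre density is available. The only point left implicit is measurability of $p\mapsto\norm{\kappa_p}_{\TV}$; the paper settles it via a countable generating algebra, and it also follows directly from your identification.
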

\begin{proof}
Put $\xi=\xi_0+\xi_1$ and $f_i=d\xi_i/d\xi$, and disintegrate $\xi=\int\nu_p\dd\theta(p)$, where $\theta=q_\#\xi$. Uniqueness of disintegration gives
\begin{equation*}
    a_i(p)\dd\nu_{i,p}=\frac{d\theta}{d\vartheta}(p)\,f_i\dd\nu_p
\quad\text{for }\vartheta\text{-almost every }p.
\end{equation*}
The total variation of the difference on the right is obtained by integrating $|f_1-f_0|$ against $\frac{d\theta}{d\vartheta}(p)\nu_p$. Integration in $p$ proves \eqref{eq:5.1}. All integrands are measurable: the variation of a signed kernel is the supremum of the sums of absolute masses over finite partitions from a countable generating algebra. These are also standard facts about measure kernels; see \cite{r8}.

The pointwise triangle inequality gives
\begin{equation*}
    a_0\norm{\nu_{1,p}-\nu_{0,p}}_{\TV}
\le\norm{a_1\nu_{1,p}-a_0\nu_{0,p}}_{\TV}+|a_1-a_0|.
\end{equation*}
Integrating against $\vartheta$ and using \eqref{eq:5.1} proves the first inequality in \eqref{eq:5.2}. The second follows because pushforward decreases total variation.
\end{proof}

\begin{proposition}[Conditional convergence of the resolvents]\label{prop:5.4}
Let $A\colon\R^n\rightrightarrows\R^n$ be maximal monotone and let $d\mu=h\dd\lambda$ be finite, with $\mu(\R^n\setminus\dom A)=0$. Choose a Borel selection $T$ of $A$ on its domain and set $T=0$ off the domain. Write
\begin{equation*}
    \mu=\int\mu_p\dd\eta(p)=\int\mu_p^\epsilon\dd\eta_\epsilon(p)\text{, where }
\eta=T_\#\mu\text{ and }\eta_\epsilon=(A_\epsilon)_\#\mu.
\end{equation*}
Set $t^{\epsilon}_p(x)=x-\epsilon p$ for $x\in \R^n$ and
\begin{equation*}   
\widehat\mu_p^\epsilon={t^{\epsilon}_p}_\#\mu_p^\epsilon.
\end{equation*}
Then
\begin{equation}\label{eq:5.3}
\lim_{\epsilon\to 0^+}\int\norm{\widehat\mu_p^\epsilon-\mu_p}_{\TV}\dd\eta(p)=0.
\end{equation}
There is a sequence $(\epsilon_j)_{j=1}^{\infty}$ convergent to $0$ along which the convergence holds in total variation for $\eta$-almost every $p$.
\end{proposition}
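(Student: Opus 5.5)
The plan is to lift everything to the graph of $A$ and to apply Lemma~\ref{lem:5.3} there. Let $Z=\graph A\subset\R^n\times\R^n$, a closed and hence Polish set, and let $q(x,p)=p$. Define
\begin{equation*}
\xi_0=(\Id,T)_\#\mu,\qquad \xi_\epsilon=(J_\epsilon,A_\epsilon)_\#\mu .
\end{equation*}
Both measures live on $Z$, because $A_\epsilon z\in A(J_\epsilon z)$. Their quotients under $q$ are $\eta$ and $\eta_\epsilon$ respectively.

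Uniqueness of disintegration identifies the conditionals.
\begin{enumerate}
\item The conditional of $\xi_0$ at $p$ is $\mu_p\otimes\delta_p$.
\item The conditional of $\xi_\epsilon$ at $p$ is the image of $\mu_p^\epsilon$ under $x\mapsto(x-\epsilon p,p)$. Indeed, on the fibre $A_\epsilon^{-1}(p)$ one has $J_\epsilon x=x-\epsilon p$, so this image is $\widehat\mu_p^\epsilon\otimes\delta_p$.
\end{enumerate}
Inequality \eqref{eq:5.2} then gives
\begin{equation*}
\int\norm{\widehat\mu^\epsilon_p-\mu_p}_{\TV}\dd\eta(p)\le 2\norm{\xi_\epsilon-\xi_0}_{\TV}.
\end{equation*}
It therefore suffices to show that $\norm{\xi_\epsilon-\xi_0}_{\TV}\to0$. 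The almost-everywhere statement along a sequence $\epsilon_j\to0$ follows from $L^1(\eta)$ convergence of the nonnegative integrands by passing to a subsequence.

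To estimate $\norm{\xi_\epsilon-\xi_0}_{\TV}$, use Minty's parametrisation. The map $\Psi_\epsilon(x,p)=x+\epsilon p$ is a bijection from $Z$ onto $\R^n$, with inverse $(J_\epsilon,A_\epsilon)$, and it is Borel in both directions. Total variation is invariant under Borel bijections, and $(\Psi_\epsilon)_\#\xi_\epsilon=\mu$. Hence
\begin{equation*}
\norm{\xi_\epsilon-\xi_0}_{\TV}=\norm{\mu-(\Id+\epsilon T)_\#\mu}_{\TV}.
\end{equation*}
This reduces the problem to a statement about a single ambient density. Since pushforward is a contraction in total variation, we may replace $h$ by a continuous compactly supported $\varphi$ with $\norm{h-\varphi}_{L^1}$ small, at the cost of an error $2\norm{h-\varphi}_{L^1}$. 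Off the negligible set $M_A$ of Lemma~\ref{lem:5.1}, the map $\Id+\epsilon T$ is injective with $1$-Lipschitz inverse $J_\epsilon$. The area formula applied to $J_\epsilon$ should then give
\begin{equation*}
d(\Id+\epsilon T)_\#(\varphi\lambda)=\varphi(J_\epsilon z)\det DJ_\epsilon(z)\dd z ,
\end{equation*}
once we check that the part of the domain counted with multiplicity is negligible. This uses the fact that each point of $\dom A\setminus M_A$ has a unique preimage.

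The main obstacle is the $L^1$ convergence $\varphi(J_\epsilon z)\det DJ_\epsilon(z)\to\varphi(z)$. Pointwise convergence $J_\epsilon z\to z$ holds on $\cl\dom A$, which carries $\mu$. For the Jacobian, I would invoke the Alberti--Ambrosio structure of maximal monotone graphs: the graph is a Lipschitz $n$-manifold, and $A$ is differentiable almost everywhere on its domain. At such points $x=J_\epsilon z$, the Jacobian is
\begin{equation*}
\det DJ_\epsilon(z)=\det\bigl(\Id+\epsilon DT(x)\bigr)^{-1}\in(0,1],
\end{equation*}
and it tends to $1$ as $\epsilon\to0$. Transferring this almost-everywhere statement from the $x$ variable to the $z$ variable again uses the area formula, since $J_\epsilon$ does not increase measure. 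Once pointwise almost-everywhere convergence of the densities is established, the masses agree: both equal $\int\varphi$, by injectivity and conservation of mass under pushforward. Scheffé's lemma then upgrades pointwise convergence to $L^1$ convergence. If the differentiability route proves awkward, I would instead try to bound $\liminf\det DJ_\epsilon\ge1$ directly from firm nonexpansiveness together with Fatou's lemma, and again close the argument by mass conservation.
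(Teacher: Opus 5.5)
Your lifting to the graph, the identification of the conditionals of $\xi_0$ and $\xi_\epsilon$, the appeal to \eqref{eq:5.2} and the subsequence argument all coincide with the paper's Step~3. The identity $\norm{\xi_\epsilon-\xi_0}_{\TV}=\norm{\mu-(\Id+\epsilon T)_\#\mu}_{\TV}$ is also correct.

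\textbf{The gap.} Your use of Scheff\'e's lemma requires the pointwise convergence $\varphi(J_\epsilon z)\det DJ_\epsilon(z)\to\varphi(z)$, and this is not established. You fix $z$, but the point $x_\epsilon=J_\epsilon z$ at which $DT$ is evaluated moves with $\epsilon$. Almost-everywhere differentiability of $A$ therefore gives no control of $\epsilon\,DT(x_\epsilon)$, which can stay of order one.

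This already happens in one dimension. A piece of $T$ with slope exceeding $1/\epsilon$, suitably placed at distance of order $\epsilon$ from $z$, is carried onto $z$ by $\Id+\epsilon T$. Placing such steep ramps densely at every dyadic scale, with summable total height, gives a continuous nondecreasing $T$ with $\liminf_{\epsilon\to0}DJ_\epsilon(z)\le1/2$ at almost every $z$ of an interval. Yet $\int(1-DJ_\epsilon)$ over that interval is $O(\epsilon)$.

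Your fallback fails for the same reason. Firm nonexpansiveness only gives $\det DJ_\epsilon\le1$. Fatou's lemma bounds $\int\liminf$ from above; it gives no pointwise lower bound for $\liminf$.

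The remark that $J_\epsilon$ does not increase measure also points the wrong way. Preimages under $J_\epsilon$ of null sets need not be null: for the Cantor function, $J_\epsilon^{-1}$ of the Cantor set has measure $\epsilon$, and $\det DJ_\epsilon=0$ there.

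\textbf{The fix.} Pass to the limit before pushing the problem down to $\R^n$, with the base point on the graph held fixed. This is what the paper does. By the area formula on the $n$-rectifiable graph $G$, your $\xi_\epsilon$ and $\xi_0$ have densities $(h\circ\Phi_\epsilon)J_G\Phi_\epsilon$ and $(h\circ\pi_x)J_G\pi_x$ with respect to $\Hh^n|_G$. Here $J_G\Phi_\epsilon(\xi)$ is the Jacobian of $\pi_x+\epsilon\pi_p$ on the approximate tangent plane at $\xi$, and that plane does not depend on $\epsilon$. These Jacobians therefore converge at $\Hh^n$-almost every $\xi$ and are bounded for $0<\epsilon\le1$. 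Approximating $h$ by continuous compactly supported functions and applying dominated convergence on bounded parts of $G$ then gives $\norm{\xi_\epsilon-\xi_0}_{\TV}\to0$.

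Equivalently, substitute $z=x+\epsilon T(x)$ in your integral before letting $\epsilon\to0$, so that $DT$ is evaluated at a fixed $x$. The image of the non-graphical part of $G$ is then controlled by conservation of mass.

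A minor point: your density formula describes $(\Id+\epsilon T)_\#(\varphi\mathbf 1_{\dom A}\lambda)$, not $(\Id+\epsilon T)_\#(\varphi\lambda)$, since $T=0$ off the domain.
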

\begin{proof}
\textbf{Step 1: graph densities.}

Let $G=\graph A\subset \R^n\times \R^n$. Let  $\pi_x,\pi_p\colon \R^n\times\R^n\to\R^n$ be the coordinate projections. Define $\Phi_\epsilon(x,p)=x+\epsilon p$ for $(x,p)\in G$. Monotonicity gives
\begin{equation}\label{eq:5.4}
\norm{(x-y)+\epsilon(p-q)}^2\ge\norm{x-y}^2+\epsilon^2\norm{p-q}^2\text{ for }p\in A(x),q\in A(y).
\end{equation}
By Minty's theorem, $\Phi_\epsilon$ is therefore a bi-Lipschitz bijection from $G$ onto $\R^n$. In particular, $G$ is $n$-rectifiable and has finite $\Hh^n$-measure on bounded subsets. Put
\begin{equation*}
   \gamma_\epsilon=(\Phi_\epsilon^{-1})_\#\mu\text{ and let }
\gamma_0=R_\#\mu\text{ where }R(x)=(x,T(x))\text{ for }x\in \R^n.
\end{equation*}
Since $G$ is countably $n$-rectifiable and $\Hh^n|_G$ is locally
finite, Theorem~3.2.19 of \cite{r21}, applied locally, gives a
unique approximate tangent plane $ P_\xi\subset \R^{2n}$
for $\Hh^n$-almost every $\xi\in G$.
For either of the linear maps $F=\Phi_\epsilon$ and $F=\pi_x$,
define
\begin{equation*}
 D_GF(\xi)=DF|_{P_\xi},
 \text{ and }
 J_GF(\xi)
 =\sqrt{\det
   \bigl((D_GF(\xi))^*D_GF(\xi)\bigr)}.
\end{equation*}
Thus $J_GF$ measures the
$n$-dimensional volume distortion of $F$ along the graph.
In particular,
\begin{equation*}
D_G\Phi_\epsilon(\xi)=(\pi_x+\epsilon\pi_p)|_{P_\xi}\text{ and }D_G\pi_x(\xi)=\pi_x|_{P_\xi}.
\end{equation*}
Set these Jacobians equal to zero on the negligible set where
the approximate tangent plane is undefined.

Since $\Phi_\epsilon:G\to\R^n$ is a bi-Lipschitz bijection,
the area formula \cite[Section~3.2.22]{r21} gives, for every
Borel set $E\subset G$,
\begin{equation*}
 \gamma_\epsilon(E)=\mu(\Phi_\epsilon(E))=\int_{\Phi_\epsilon(E)}h(z)\dd\lambda(z)=\int_E h(\Phi_\epsilon(\xi))
          J_G\Phi_\epsilon(\xi)\dd\Hh^n(\xi).
\end{equation*}
For the projection $\pi_x$, first consider
$B=G\cap\pi_x^{-1}(M_A)$.
Lemma~\ref{lem:5.1} gives $\lambda(M_A)=0$, and the area formula
yields
\begin{equation*}
 \int_B J_G\pi_x\dd\Hh^n
 =\int_{M_A}\Hh^0(A(x))\dd\lambda(x)=0.
\end{equation*}
Hence $J_G\pi_x=0$ at $\Hh^n$-almost every
point of $B$.
On $G\setminus B$, the projection $\pi_x$ is injective, with
inverse $x\mapsto(x,T(x))$ on $\dom A\setminus M_A$.
Applying the weighted area formula on this part of the graph
therefore gives for Borel $E\subset G$
\begin{equation*}
\int_E h(\pi_x(\xi))J_G\pi_x(\xi)\dd\Hh^n(\xi)=\int_{\dom A\setminus M_A}
      \mathbf{1}_E(x,T(x))h(x)\dd\lambda(x)=\gamma_0(E).
\end{equation*}
The last equality uses $\mu(M_A)=0$ and
$\mu(\R^n\setminus\dom A)=0$.
We may take $h$ to be a finite nonnegative Borel representative,
so its product with the vanishing Jacobian on $B$ is unambiguous.
Consequently, as measures on $G$,
\begin{equation}\label{eq:5.5}
 d\gamma_\epsilon=(h\circ\Phi_\epsilon)J_G\Phi_\epsilon\dd\Hh^n|_G\text{ and }
 d\gamma_0=(h\circ\pi_x)J_G\pi_x\dd\Hh^n|_G.
\end{equation}

\textbf{Step 2: total-variation convergence.}

We claim that
\begin{equation}\label{eq:5.6}
\lim_{\epsilon\to 0^+}\norm{\gamma_\epsilon-\gamma_0}_{\TV}=0.
\end{equation}
Let $E\subset G$ be a bounded Borel set. On $E$, $\Phi_\epsilon$ converges uniformly to $\pi_x$. Their tangential differentials are the restrictions of the linear maps $\pi_x+\epsilon\pi_p$ to the fixed approximate tangent plane of $G$. Hence the Jacobians converge almost everywhere and are uniformly bounded for $0<\epsilon\le1$. Given $\delta>0$, choose $g\in \mathcal{C}_c(\R^n)$ with $\norm{h-g}_{L^1(\lambda)}<\delta$. The map $\Phi_\epsilon$ is injective and $\pi_x$ has multiplicity one almost everywhere on its image. The area formula therefore gives
\begin{equation*}
  \int_E|h-g|(\Phi_\epsilon)J_G\Phi_\epsilon\dd\Hh^n\le\delta,\qquad
\int_E|h-g|(\pi_x)J_G\pi_x\dd\Hh^n\le\delta.  
\end{equation*}
Thus
\begin{equation}\label{eq:5.7}
\int_E\bigl|h(\Phi_\epsilon)J_G\Phi_\epsilon-h(\pi_x)J_G\pi_x\bigr|\dd\Hh^n\leq 2\delta+\int_E\bigl|g(\Phi_\epsilon)J_G\Phi_\epsilon-g(\pi_x)J_G\pi_x\bigr|\dd\Hh^n.
\end{equation}
The graph measures $\gamma_\epsilon, \gamma_0$ have total mass $\mu(\R^n)$. Hence, for every bounded Borel $E\subset G$,
\begin{equation*}
   \norm{\gamma_\epsilon-\gamma_0}_{\TV}
\le2\norm{(\gamma_\epsilon-\gamma_0)|_E}_{\TV}+2\gamma_0(G\setminus E). 
\end{equation*}
Using \eqref{eq:5.7}, let $\epsilon$ tend to zero, then let $\delta$ tend to zero and, finally, exhaust $G$ by bounded sets. This proves \eqref{eq:5.6}.

\textbf{Step 3: convergence of conditional measures.}

Regard $\gamma_\epsilon$ and $\gamma_0$ as measures on $\R^{2n}$.
Using the prescribed everywhere-defined Borel versions of the
conditional probabilities, define
\begin{equation*}
 \gamma_{\epsilon,p}
   =(t^{\epsilon}_p,p)_\#\mu_p^\epsilon\text{ and }
 \gamma_{0,p}=(\mathrm{id},p)_\#\mu_p.
\end{equation*}
For $\eta_\epsilon$-almost every $p$, the measure $\mu_p^\epsilon$
is concentrated on $A_\epsilon^{-1}(p)$.
Likewise, $\mu_p$ is concentrated on $T^{-1}(p)$, for
$\eta$-almost every $p$. Thus these kernels give the disintegrations
\begin{equation*}
 \gamma_\epsilon=\int\gamma_{\epsilon,p}\dd\eta_\epsilon(p),
 \text{ and }
 \gamma_0=\int\gamma_{0,p}\dd\eta(p)
\end{equation*}
with respect to $\pi_p$.
Their definitions also give
\begin{equation*}
 (\pi_x)_\#\gamma_{\epsilon,p}=\widehat\mu_p^\epsilon
 \text{ and }
 (\pi_x)_\#\gamma_{0,p}=\mu_p
 \text{ for every }p.
\end{equation*}
The definitions at exceptional labels ensure these identities
everywhere, although concentration on $G$ is required only almost
everywhere for the respective quotient measures.
Contraction of total variation under $\pi_x$ and
Lemma~\ref{lem:5.3} therefore give
\begin{equation}\label{eq:5.8}
 \int\norm{\widehat\mu_p^\epsilon-\mu_p}_{\TV}\dd\eta(p)\le\int\norm{\gamma_{\epsilon,p}-\gamma_{0,p}}_{\TV}\dd\eta(p)\le
 \norm{\gamma_\epsilon-\gamma_0}_{\TV}
 +\norm{\eta_\epsilon-\eta}_{\TV}
\le2\norm{\gamma_\epsilon-\gamma_0}_{\TV}.
\end{equation}
This proves \eqref{eq:5.3}. Choose a sequence $(\epsilon_j)_{j=1}^{\infty}$ so that the right-hand sides  with parameters $\epsilon_j$ are summable; Tonelli's theorem gives the almost-everywhere assertion.
\end{proof}

\subsection{Measure contraction property and the conditional measure class}

Proposition~\ref{prop:5.4} gives convergence of the translated
resolvent conditionals. To pass their measure contraction properties to
the limit, we use the following stability statement. It allows
both the measures and the centres to vary and retains the
source-radius restriction when $\kappa>0$.

\begin{lemma}[Stability of homothety domination]\label{lem:4.2}
Let $(\eta_j)_{j=1}^{\infty}$ be finite measures on $\R^n$ converging narrowly to a
finite measure $\eta$ concentrated on a closed convex set $Y$.
Let $G\subset Y$ and $T\subset(0,1)$ be dense. For each $o\in G$,
suppose there are $(o_j)_{j=1}^{\infty}$ converging to $o$ such that, for all sufficiently large
$j$, \eqref{eq:2.6} holds for $\eta_j$, centre $o_j$, and every
$t\in T$. Then \eqref{eq:2.6} holds for $\eta$ at every
$o\in Y$ and $0<t\le1$. The same assertion holds for a fixed
measure if domination is initially known only on $G\times T$.
\end{lemma}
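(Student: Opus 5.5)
The plan is to test \eqref{eq:2.6} against nonnegative continuous functions and to move the centre and the time by a limiting argument. For $o\in G$, $t\in T$ and $\varphi\in\mathcal C_c(\R^n)$ with $\varphi\ge0$, the domination for $\eta_j$ with centre $o_j$ reads
\begin{equation*}
\int\varphi\bigl(o_j+t(x-o_j)\bigr)\,\mathbf 1_{B(o_j,R_{\kappa,N})}(x)\,\beta_{\kappa,N}^{(t)}(\norm{x-o_j})\dd\eta_j(x)\le\int\varphi\dd\eta_j .
\end{equation*}
The right-hand side tends to $\int\varphi\dd\eta$ by narrow convergence. The left-hand side has the awkward indicator when $\kappa>0$. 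I will handle it by inserting a continuous cutoff $\chi_\delta(\norm{x-o_j})$, where $\chi_\delta$ is supported in $[0,R_{\kappa,N}-\delta)$ and lies below the indicator. The resulting integrand $F_j(x)$ is continuous and bounded, and it converges to the corresponding $F(x)$ with centre $o$ uniformly on $\R^n$. This uses that $\beta^{(t)}_{\kappa,N}$ is continuous on $[0,R_{\kappa,N}-\delta]$, that $\varphi$ is uniformly continuous, and that the total masses are bounded. Narrow convergence then gives
\begin{equation*}
\int\varphi\circ H_{o,t}\,\chi_\delta(\norm{\cdot-o})\,\beta^{(t)}_{\kappa,N}(\norm{\cdot-o})\dd\eta\le\int\varphi\dd\eta .
\end{equation*}
Letting $\chi_\delta$ increase to $\mathbf 1_{[0,R_{\kappa,N})}$ and applying monotone convergence recovers \eqref{eq:2.6} for $\eta$ on $G\times T$, first against $\mathcal C_c$ test functions and then as an inequality of measures by Riesz/regularity. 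When $\kappa\le0$ no cutoff is needed, since $\beta\le1$ is continuous on $[0,\infty)$.

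The second step, which also proves the final sentence of the lemma, extends the inequality from $G\times T$ to all of $Y\times(0,1]$ for a fixed measure $\eta$. I will reuse the same functional form. Take $o\in Y$ and $t\in(0,1)$, and choose $o_i\in G$ with $o_i\to o$ and $t_i\in T$ with $t_i\to t$. With the continuous cutoff in place, the integrand with parameters $(o_i,t_i)$ converges to the one with $(o,t)$ uniformly on $\R^n$. This uses joint continuity of $(t,r)\mapsto\beta^{(t)}_{\kappa,N}(r)$ on $(0,1]\times[0,R_{\kappa,N}-\delta]$, including at $r=0$, where the value is $t^N$. The measure $\eta$ is now fixed and finite, so dominated convergence passes the inequality to $(o,t)$. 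Monotone convergence in the cutoff then finishes, and $t=1$ is trivial since $\beta^{(1)}=1$.

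The main obstacle is the boundary behaviour caused by the radius restriction when $\kappa>0$. There the indicator $\mathbf 1_{B(o,R)}$ is discontinuous, and $\beta^{(t)}$ blows up as $r\uparrow R_{\kappa,N}$ because $s_{\kappa/(N-1)}(r)\to0$. One cannot pass to the limit directly, since mass of $\eta_j$ near the sphere $\norm{x-o_j}=R$ could carry a large weight. The inner cutoff $\chi_\delta$ avoids this: every estimate is made at a fixed $\delta$ where everything is bounded and continuous, and the sphere is reached only at the end by monotone convergence for the single limit measure $\eta$. Concentration of $\eta$ on the closed convex set $Y$ is used only so that the centres $o\in Y$ are admissible and $H_{o,t}$ maps $Y$ into $Y$. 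Since $G$ is dense in $Y$, approximating centres always exist.
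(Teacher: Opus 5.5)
Your proposal is correct and follows essentially the paper's argument. You test against continuous functions with a cutoff strictly inside $B(o,R_{\kappa,N})$, pass to the limit by uniform convergence of the integrands together with narrow convergence, and remove the cutoff by monotone convergence; you then repeat this with the fixed measure $\eta$ to move the centre and time from $G\times T$ to $Y\times(0,1]$. The differences are cosmetic: your radial cutoff moves with $o_j$, whereas the paper uses a fixed $\chi\in\mathcal C_c(B(o,R_{\kappa,N}))$ that is eventually supported in $B(o_j,R_{\kappa,N})$, and you use $\mathcal C_c$ rather than bounded continuous test functions.
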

\begin{proof}
Write $R=R_{\kappa,N}\in(0,\infty]$. Fix $o\in G$, $t\in T$,
a nonnegative bounded continuous function $\phi$, and
$0\le\chi\le1$ with $\chi\in \mathcal{C}_c(B(o,R))$.
For all sufficiently large $j$, the support of $\chi$ lies in
$B(o_j,R)$, so \eqref{eq:2.6} gives
\begin{equation*}
    \int\chi(x)\phi(H_{o_j,t}(x))
       \beta_{\kappa,N}^{(t)}(\norm{x-o_j})\dd\eta_j(x)
 \le\int\phi\dd\eta_j.
\end{equation*}
The integrands on the left, extended by zero off the support of
$\chi$, converge uniformly to the corresponding integrand with
centre $o$. Narrow convergence yields the same inequality for
$\eta$ and $o$. Choose increasing such cutoffs $\chi$ converging to
$\mathbf{1}_{B(o,R)}$ and apply monotone convergence. This proves
\eqref{eq:2.6} on $G\times T$.

For $(o,t)\in Y\times(0,1)$, take
$(o_\ell,t_\ell)_{\ell=1}^{\infty}$ in $G\times T$ converging to $(o,t)$ and repeat
the same compactly supported cutoff argument with the fixed measure
$\eta$. Continuity of the distortion coefficient in the centre and
time gives the desired domination. At $t=1$ it is immediate.
Regularity of finite Borel measures allows passage from continuous
test functions to the measure inequality.
\end{proof}

We now combine the inheritance of measure contraction property for leaves with conditional
convergence and stability to obtain measure contraction property on
 fibres of monotone maps.

\begin{theorem}[Measure contraction property on fibres of monotone maps]\label{thm:5.5}
Let $X,\mu,\kappa,N$ satisfy the assumptions of Theorem~\ref{thm:4.1}. Let $A:\R^n\rightrightarrows\R^n$ be maximal monotone and suppose $\mu(\R^n\setminus\dom A)=0$. Disintegrate $\mu$ by any Borel selection $T$ of $A$ on its domain, extended by zero elsewhere:
\begin{equation*}
\mu=\int\mu_p\dd\eta(p),\quad\eta=T_\#\mu.
\end{equation*}
Put $F_p=A^{-1}(p)$ and $Y_p=F_p\cap X$. For $\eta$-almost every $p$,
\begin{equation}\label{eq:5.9}
\supp\mu_p=Y_p= \cl (X\cap \dom A\cap T^{-1}(p)).
\end{equation}
Moreover, $(Y_p,\norm{\cdot},\mu_p)$ satisfies $\MCP(\kappa,N)$. If $k=\dim F_p$, then
\begin{equation}\label{eq:5.10}
\mu_p\sim\Hh^k|_{Y_p},\qquad\mu_p(\ri F_p\cap\ri X)=1.
\end{equation}
The quotient measure and, up to a quotient-negligible set, the conditional measures are independent of the selection.
\end{theorem}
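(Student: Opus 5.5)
The plan is to realise the fibres $F_p$ as limits of translated resolvent leaves and to transfer the inequalities established for those leaves. Fix $\epsilon>0$. By Lemma~\ref{lem:5.2} the isometric leaves of the $1$-Lipschitz map $J_\epsilon$ are the sets $\S_{\epsilon,p}=\epsilon p+F_p$, and they coincide with the fibres of $A_\epsilon$. The disintegration $\mu=\int\mu^\epsilon_p\dd\eta_\epsilon(p)$ along $A_\epsilon$ is therefore, after relabelling, the leaf disintegration of Theorem~\ref{thm:4.1} applied to $u=J_\epsilon$. Theorem~\ref{thm:4.1} then gives, for $\eta_\epsilon$-almost every $p$, that $\mu_p^\epsilon$ is supported on $(\epsilon p+F_p)\cap X$, is equivalent to $\Hh^k$ there, and satisfies $\MCP(\kappa,N)$. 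Translating by $t^\epsilon_p$ shows that $\widehat\mu^\epsilon_p$ is concentrated on $F_p\cap(X-\epsilon p)$ and satisfies \eqref{eq:2.6} for every centre $o_\epsilon\in F_p\cap(X-\epsilon p)$.

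The next step is to pass to the limit $\epsilon_j\to0$ along the sequence of Proposition~\ref{prop:5.4}. For $\eta$-almost every $p$ this gives $\widehat\mu^{\epsilon_j}_p\to\mu_p$ in total variation, hence narrowly. The delicate point is that the good set of labels for each $\epsilon_j$ is only $\eta_{\epsilon_j}$-full, not $\eta$-full. I would handle this through \eqref{eq:5.8}. Since $\norm{\eta_\epsilon-\eta}_{\TV}\to0$ with summable errors along $\epsilon_j$, the exceptional sets $B_j$, which are $\eta_{\epsilon_j}$-null, satisfy $\eta(B_j)\le\norm{\eta_{\epsilon_j}-\eta}_{\TV}$. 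Borel--Cantelli then shows that $\eta$-almost every $p$ lies outside $B_j$ for all large $j$.

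For the centres, fix such a $p$. Take $G$ to be a countable dense subset of $\ri F_p\cap\ri X$; nonemptiness of this set for a.e.\ $p$ is argued below. For $o\in G$ and large $j$ we have $o\in X-\epsilon_j p$, because $o$ is interior to $X$, so we may set $o_j=o$. Lemma~\ref{lem:4.2}, with $T=(0,1)\cap\mathbb Q$, then yields \eqref{eq:2.6} for $\mu_p$ at every centre in $\cl G$. Each $\mu_p$ is concentrated on $F_p\cap X=Y_p$. If $\cl G=Y_p$ and $\supp\mu_p=Y_p$, this gives $\MCP(\kappa,N)$.

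The remaining work is to identify the support and the measure class, (5.9)--(5.10), and I expect this to be the main obstacle. Total-variation convergence preserves absolute continuity with respect to $\Hh^k|_{F_p}$, because each $\widehat\mu^{\epsilon_j}_p$ is absolutely continuous. Equivalence, however, requires positivity of the density. My first approach is to use the MCP inequality itself. An $\MCP$ measure on the convex set $\cl\Conv\supp\mu_p$ has positive density in the interior of its support: contracting a set of positive mass towards any centre gives positive mass near that centre. So $\mu_p\sim\Hh^{k'}$ on $\supp\mu_p$, where $k'$ is the dimension of that support. It remains to show that $\supp\mu_p$ is all of $Y_p$ and has dimension $k=\dim F_p$. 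Here I would use $\Hh^k$-equivalence on the leaves $\epsilon_jp+F_p$ together with lower bounds on densities that are uniform in $j$ and come from the MCP inequalities. Near any point $x\in\ri F_p\cap\ri X$, and for large $j$, the translated measures give mass to every relative ball $B(x,r)\cap F_p$, bounded below by contracting the whole measure towards $x$. The coefficient in this bound is independent of $j$, so the lower bound survives the limit. This places $x$ in $\supp\mu_p$. Density of $\ri F_p\cap\ri X$ in $Y_p$, once the intersection is known to be nonempty, then gives the first equality in (5.9). Nonemptiness, and $\mu_p(\ri F_p\cap\ri X)=1$, I would take from Lemma~\ref{lem:3.3} applied to $J_\epsilon$ together with $\lambda(\partial X)=0$, again transferred by the TV limit. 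The identity $Y_p=\cl(X\cap\dom A\cap T^{-1}(p))$ follows because $T^{-1}(p)$ contains $\ri F_p$ off the negligible multivalued locus $M_A$ of Lemma~\ref{lem:5.1}. Independence of the selection also follows from Lemma~\ref{lem:5.1}: two selections agree $\mu$-almost everywhere, so they induce the same quotient measure and, by uniqueness of disintegration, the same conditionals.
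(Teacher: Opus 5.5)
Your skeleton matches the paper's proof. You apply Theorem~\ref{thm:4.1} to $J_\epsilon$ via Lemma~\ref{lem:5.2}, take a summable sequence $\epsilon_j$ from Proposition~\ref{prop:5.4} with Borel--Cantelli to absorb the $\eta_{\epsilon_j}$-exceptional labels, and use Lemma~\ref{lem:4.2} with frozen centres $o_j=o\in F_p\cap\ri X$. Two of your total-variation transfers are valid: $\widehat\mu^{\epsilon_j}_p(\ri F_p)=1$ passes to $\mu_p(\ri F_p)=1$, and absolute continuity passes to $\mu_p\ll\Hh^k|_{F_p}$.

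The gap is the reverse inclusion $\Hh^k|_{Y_p}\ll\mu_p$. Your mechanism is to contract a set of positive mass towards a centre, obtaining positive mass near that centre, either for $\mu_p$ or uniformly in $j$ for the approximants. This only shows that every relative ball about a point of $Y_p$ has positive mass, i.e.\ $\supp\mu_p=Y_p$. Full support plus absolute continuity does not give equivalence. For example, take $U\subset(0,1)$ a dense open set of small length; then $\mathbf 1_U\,\lambda^1|_{[0,1]}$ has full support but vanishes on a set of positive length. Likewise, lower bounds on the masses of balls are not pointwise lower bounds on densities, so that argument does not produce the ``uniform density bounds'' you mention.

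The missing idea is to average the contraction inequality over centres and use Fubini, as the paper does in Lemma~\ref{lem:5.6}. Take a relative ball $B=B(c,r)$ compactly inside $\ri Y_p$, with $2r<R_{\kappa,N}$ if $\kappa>0$, so that $\mu_p((o+S)/2)\ge\gamma\mu_p(S)$ for all $o\in B$ and all Borel $S\subset B$. Let $E\subset B(c,r/3)$ with $\mu_p(E)=0$. Applying the inequality to $S_o=\{x\in B(c,r/3):(o+x)/2\in E\}$ gives $\mu_p(S_o)=0$ for every $o\in B$. Integrating in $o$ yields $2^k\lambda^k(E)\,\mu_p(B(c,r/3))=0$, and full support forces $\lambda^k(E)=0$. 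This needs only $\supp\mu_p=Y_p$ and the inequalities at all centres, which you already have after Lemma~\ref{lem:4.2}. So the support argument can also be run on $\mu_p$ directly, with no uniformity in $j$.

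Two smaller repairs are needed.
\begin{enumerate}
\item For $\kappa>0$ you cannot contract ``the whole measure'', since source sets must lie in $B(o,R_{\kappa,N})$. The paper instead shows that a point of $Y_p$ within distance $R_{\kappa,N}$ of a support point is itself a support point, and then chains along segments.
\item The closure identity in \eqref{eq:5.9} does not follow from $\lambda(M_A)=0$ alone, since a Lebesgue-null set can contain an entire fibre. Either use $\mu_p(M_A)=0$ together with the equivalence, or argue as the paper does: $\mu_p$ is concentrated on $T^{-1}(p)\cap X\cap\dom A\subset Y_p$, hence $Y_p=\supp\mu_p\subset\cl(X\cap\dom A\cap T^{-1}(p))\subset Y_p$.
\end{enumerate}
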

\begin{proof}
Selection independence follows from Lemma~\ref{lem:5.1} and $\mu\ll\lambda$. For $\epsilon>0$, apply Theorem~\ref{thm:4.1} to a firmly nonexpansive, hence $1$-Lipschitz map $J_\epsilon$. Lemma~\ref{lem:5.2} shows that, for $\eta_\epsilon$-almost every $p$, the translated conditional $\widehat\mu_p^\epsilon$ has full support on
\begin{equation*}
Y_{\epsilon,p}=F_p\cap(X-\epsilon p)
\end{equation*}
and satisfies $\MCP(\kappa,N)$.

The exceptional quotient sets depend on $\epsilon$. To handle this, choose the sequence $\epsilon_j$ tending to zero in the proof of Proposition~\ref{prop:5.4}, so that $\sum_j\norm{\gamma_{\epsilon_j}-\gamma_0}_{\TV}<\infty$. Let $B_j$ be a Borel $\eta_{\epsilon_j}$-negligible set containing the labels where this conclusion fails. Pushforward decreases total variation, so
\begin{equation*}
\eta(B_j)\le\norm{\eta-\eta_{\epsilon_j}}_{\TV}\le\norm{\gamma_0-\gamma_{\epsilon_j}}_{\TV}.
\end{equation*}

The Borel--Cantelli lemma and Proposition~\ref{prop:5.4} therefore imply that, for $\eta$-almost every $p$, one has $p\notin B_j$ for all sufficiently large $j$ and $(\widehat\mu_p^{\epsilon_j})_{j=1}^{\infty}$ converges to $\mu_p$ in total variation. By disintegration, we may also assume that $\mu_p(\ri X)=1$ and that $\mu_p$ is concentrated on $T^{-1}(p)\cap\dom A$.

Fix such a $p$ and put $G_p=F_p\cap\ri X$.
This set is nonempty, and convexity gives $\cl G_p=Y_p$.
For each fixed $o\in G_p$, choose $r_o>0$ with
$B(o,r_o)\subset X$. Since $\epsilon_j$ tends to zero, we have
\begin{equation*}
 o+\epsilon_jp\in B(o,r_o)\subset X
 \text{ for all sufficiently large }j.
\end{equation*}
Together with $o\in F_p$, this implies
$o\in Y_{\epsilon_j,p}=F_p\cap(X-\epsilon_jp)$.
We may therefore choose the centres in the stability argument
to be $o_j=o$ for all sufficiently large $j$.
The required index may depend on $p$ and $o$, which is harmless
since the limit is taken separately for each fixed centre.
Lemma~\ref{lem:4.2} first gives the homothety inequalities for
$\mu_p$ at centres in $G_p$ and then, by density, at every centre
in $Y_p$, retaining the source-radius restriction when $\kappa>0$.

Let us show that these inequalities imply $\supp\mu_p=Y_p$. Write $R_{\kappa,N}=\pi\sqrt{(N-1)/\kappa}$ when $\kappa>0$ and $R_{\kappa,N}=\infty$ otherwise. If $o\in Y_p$ lies at distance less than $R_{\kappa,N}$ from a point of $\supp\mu_p$, choose a bounded positive-measure set $E\subset B(o,R_{\kappa,N})$ with closure contained in this ball. Contracting $E$ towards $o$, with a sufficiently small positive ratio, puts its image in any prescribed neighbourhood of $o$. The distortion coefficient has a positive lower bound on $E$, so that neighbourhood has positive measure. Thus $o\in\supp\mu_p$. If $R_{\kappa,N}$ is finite, divide the segment from a support point to any point of $Y_p$ into finitely many subsegments of length less than $R_{\kappa,N}$ and apply this implication successively. Hence $\supp\mu_p=Y_p$. Since $\mu_p$ is concentrated on $T^{-1}(p)\cap X\cap\dom A\subset Y_p$, the closure identity \eqref{eq:5.9} follows.

The equivalence of measures follows from Lemma~\ref{lem:5.6} below. The intersection $F_p\cap\ri X$ is nonempty and open relative to $F_p$, so $\dim Y_p=\dim F_p$. The relative boundary of $F_p$ is $\Hh^k$-negligible, while $\mu_p(\partial X)=0$. This proves the final assertion in \eqref{eq:5.10}.
\end{proof}
Euclidean measure rigidity under a qualitative contraction hypothesis was proved by Cavalletti and Mondino \cite[Theorem~1.1]{r12}. We include an elementary argument giving the measure equivalence needed here.

\begin{lemma}[Measure class on a convex support]\label{lem:5.6}
Let $Y$ be a nonempty closed convex subset of a finite-dimensional Euclidean space, and let $k=\dim Y$. If a nonzero finite measure $\sigma$ has support $Y$ and satisfies $\MCP(\kappa,N)$, then
\[
\sigma\sim\Hh^k|_Y.
\]
\end{lemma}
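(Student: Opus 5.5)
We work inside $\Aff Y$, identified with $\R^k$, with $\Hh^k|_Y=\lambda^k|_Y$; $Y$ has nonempty interior $I=\ri Y$ there. Since $\lambda^k(Y\setminus I)=0$, the proof has two halves. First we show $\sigma(Y\setminus I)=0$ and $\sigma\ll\lambda^k$. Then we show that the density is positive almost everywhere on $I$. Throughout we use the homothety domination \eqref{eq:2.5} with centres in $Y$. Every estimate will be localised to balls of radius less than $R_{\kappa,N}$, so that the distortion coefficient $\beta^{(t)}_{\kappa,N}$ is bounded above and below by positive constants of order $t^N$ on the relevant sets.

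\textbf{Absolute continuity.} Suppose $E\subset B(o,r)\cap Y$ is Borel with $\lambda^k(E)=0$, where $r$ is small. The domination inequality gives lower bounds on $\sigma$ of contracted images. To bound $\sigma(E)$ from above, we reverse the roles. We represent $E$, up to small $\sigma$-measure, as a homothetic image $H_{o,t}(B)$ of a set $B$ that is $t^{-1}$ larger. Here is the planned argument. Fix a compact $K\subset E$. For centres $o$ ranging over a small ball $U$ near $K$, and for $t$ close to $1$, the sets $H_{o,t}^{-1}(K)$ are disjoint from each other for a suitable finite or continuum family of centres. Since $\lambda^k(K)=0$, Fubini over the centre variable shows the following: for $\lambda^k$-almost every displacement, the preimages of $K$ under different homotheties overlap negligibly.

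More concretely, integrate \eqref{eq:2.6} over centres $o\in U$ with respect to $\lambda^k$. This yields
\[
\sigma\ \ge\ c\int_U (H_{o,t})_\#\sigma\,d\lambda^k(o)\quad\text{on a neighbourhood,}
\]
with $c>0$ depending on $t$ and $r$. The measure on the right is $\sigma$ convolved with a homothetic copy of $\mathbf 1_U$, so it is absolutely continuous with a density bounded below on a ball. This step only shows that $\sigma$ dominates a multiple of $\lambda^k$ locally, which is the positivity half rather than absolute continuity.

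For absolute continuity we first try a doubling argument. Apply \eqref{eq:2.5} with centre $o$ and $t=1/2$ to $B(o,2s)\cap Y$. This gives $\sigma(B(o,s))\ge c\,\sigma(B(o,2s))$ for $o\in Y$ and small $s$, so $\sigma$ is locally doubling on $Y$. Combining this with the integrated inequality above, we aim to show $\sigma(B(x,s))\le C s^k$ for $x\in I$ and small $s$.

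To get that bound, use centres $o$ in $B(x,s)$ and contraction ratio $t$ near $1$: the image of a fixed ball $B(x',\rho)$ under $H_{o,t}$ sweeps $B(x,s)$. Averaging over $o\in B(x,\rho)$, the pushforward density is controlled by $\sigma(B(x',\rho))\,/\,\bigl((1-t)^k\rho^k\bigr)$. This is an upper bound on the averaged image. Reading it the other way gives $\sigma(B(x,s))\lesssim \sigma(\text{larger ball})\,s^k/\rho^k$. A uniform bound $\sigma(B(x,s))\le Cs^k$ then yields $\sigma\ll\lambda^k$ on $I$ by the differentiation theorem.

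\textbf{Positivity of the density.} The integrated-centre inequality gives $\sigma\ge c\lambda^k$ on small balls around points of $\supp\sigma=Y$ in $I$. Hence the density is bounded below locally, and $\lambda^k\ll\sigma$ on $I$.

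\textbf{Boundary.} It remains to show $\sigma(Y\setminus I)=0$. Fix $o\in I$ and contract $\partial Y\cap B$ towards $o$ with ratio $t<1$. The images $H_{o,t}(\partial Y)$ are pairwise disjoint for distinct $t$ and lie in $I$, so a finite measure can give positive mass to only countably many of them. But \eqref{eq:2.5} gives $\sigma(H_{o,t}(\partial Y\cap B))\ge c\,\sigma(\partial Y\cap B)$ uniformly for $t$ in an interval. This forces $\sigma(\partial Y\cap B)=0$.

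The same disjoint-images trick is the cleanest route to absolute continuity. If $\lambda^k(E)=0$ and $\sigma(E)>0$, then by Fubini almost every translate of $E$ is $\lambda^k$-null. We need uncountably many pairwise disjoint homothetic images $H_{o,t}(E)$, each of $\sigma$-mass at least $c\,\sigma(E)$, which contradicts finiteness. Producing disjointness for a general null set $E$ is not automatic. This is the main obstacle, and we would first try a Lebesgue-density and Fubini selection of centres and ratios making the images disjoint up to $\sigma$-null overlaps. Failing that, we would cite \cite[Theorem~1.1]{r12}.
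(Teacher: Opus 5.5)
Your positivity step is sound and is essentially the paper's converse argument. Averaging \eqref{eq:2.6} over centres in a ball gives a pushforward of $\sigma$ whose density is bounded below, so $\sigma\ge c\lambda^k$ near interior points. Your boundary step is also correct and, unlike the paper's, does not use absolute continuity. For $o\in\ri Y$ the sets $H_{o,t}(\partial Y\cap B)$ lie in $\ri Y$, and they are pairwise disjoint for distinct $t$ because a ray from $o$ meets $\partial Y$ at most once.

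The genuine gap is absolute continuity on $\ri Y$, which is the heart of the lemma, and you leave it open. The doubling route fails as written. Condition \eqref{eq:2.6} is a one-sided domination $(H_{o,t})_\#(\beta\sigma)\le\sigma$. So doubling, and the averaged image of a fixed ball $B(x',\rho)$, give only \emph{lower} bounds for $\sigma$. An upper bound on that averaged image says nothing about $\sigma(B(x,s))$, and ``reading it the other way'' is not a valid step. The disjoint-images route stalls as well. Exact disjointness is impossible in general: two overlapping homothetic images of a dense $G_\delta$ null set always meet, by Baire. Discarding $\lambda^k$-null overlaps would presuppose the absolute continuity you are trying to prove. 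Falling back on \cite{r12} would import this half of the lemma rather than prove it.

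The missing idea is to put the null set $E$ on the source side and integrate over the centre; no disjointness is needed. Choose a ball $U\subset\ri Y$, a fixed ratio $t$ (the paper takes $t=1/2$), and $\gamma>0$ such that $\sigma(H_{o,t}(E))\ge\gamma\sigma(E)$ for all $o\in U$ and all Borel $E$ in a suitable ball. Since $z\in H_{o,t}(E)$ exactly when $o\in(z-tE)/(1-t)$, Tonelli's theorem gives
\[
\gamma\lambda^k(U)\sigma(E)\le\int_U\sigma\bigl(H_{o,t}(E)\bigr)\dd\lambda^k(o)
=\int_Y\lambda^k\Bigl(U\cap\frac{z-tE}{1-t}\Bigr)\dd\sigma(z)
\le\Bigl(\frac{t}{1-t}\Bigr)^k\lambda^k(E)\,\sigma(Y).
\]
Hence $\sigma(E)=0$ whenever $\lambda^k(E)=0$. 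Taking $E=B(x,s)$ gives exactly the bound $\sigma(B(x,s))\le Cs^k$ you were after. With $t=1/2$ this is the paper's proof of $\sigma\ll\lambda^k$ on interior balls.
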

\begin{proof}
The assertion is immediate for $k=0$. Identify $\Aff Y$ with
$\R^k$, and suppose $k\ge1$. Choose $B=B(c,r)$ compactly contained in $\ri Y$,
with $2r<R_{\kappa,N}$ when $\kappa>0$. Positivity and continuity
of the distortion coefficient give $\gamma>0$ such that
\begin{equation}\label{eq:5.11}
 \sigma\bigl((o+E)/2\bigr)\ge\gamma\sigma(E)
 \text{ for all }o\in B\text{ and all Borel sets }\ E\subset B.
\end{equation}
If $\lambda^k(E)=0$, integration in $o$ and Fubini's theorem give
\begin{equation*}
 \gamma\lambda^k(B)\sigma(E)
 \le\int_B\sigma\bigl((o+E)/2\bigr)\dd \lambda^k(o)
 =\int_{Y}\lambda^k\bigl((2z-E)\cap B\bigr)\dd\sigma(z)=0.
\end{equation*}
Thus $\sigma|_B\ll\lambda^k|_B$.

Conversely, put $B'=B(c,r/3)$ and let $E\subset B'$ be Borel
with $\sigma(E)=0$. Applying \eqref{eq:5.11} to the source set
$\{x\in B'\mid(o+x)/2\in E\}$ yields
\begin{equation*}
 \sigma(\{x\in B'\mid(o+x)/2\in E\})=0
 \text{ for all }o\in B.
\end{equation*}
Integrating in $o$ and using $2E-x\subset B$ for $x\in B'$, we get
\begin{equation*}
 0=\int_{B'}\lambda^k\bigl((2E-x)\cap B\bigr)\dd\sigma(x)
  =2^k\lambda^k(E)\sigma(B').
\end{equation*}
Full support gives $\sigma(B')>0$, so $\lambda^k(E)=0$.
A countable cover by such smaller balls proves equivalence on
$\ri Y$.

The relative boundary is $\lambda^k$-negligible. Cover it by countably
many bounded Borel subsets $(E_j)_{j=1}^{\infty}$ of $\partial Y$, choosing $o_j\in\ri Y$
so that $\sup\{\norm{x-o_j}\mid x\in E_j\}<R_{\kappa,N}$ when
$\kappa>0$. Such a cover exists because $\ri Y$ is dense in $Y$.
The set $(o_j+E_j)/2$ lies in $\ri Y$ and is Lebesgue-negligible,
hence $\sigma$-negligible. The contraction inequality, whose coefficient
has a positive lower bound on $E_j$, implies $\sigma(E_j)=0$.
Thus $\sigma(\partial Y)=0$, completing the proof.
\end{proof}

For an everywhere-defined Borel monotone map, Theorem~\ref{thm:5.5} applies to the unique extension in Lemma~\ref{lem:5.1}. The closure in \eqref{eq:5.9} is necessary. For example, the monotone map on $\R$ taking the values $0$, $1/2$, and $1$ on $(-\infty,0)$, $\{0\}$, and $(0,\infty)$ has, for Lebesgue measure on $[-1,1]$, a conditional supported on $[-1,0]$ although its zero fibre in this interval is $[-1,0)$. Its maximal extension has $A(0)=[0,1]$.

The measure-contraction conclusion cannot in general be strengthened
to curvature--dimension inheritance: Corollary~\ref{cor:monotone-cd-failure}
gives an ambient $\CD(0,3)$ measure whose conditionals on the fibres
of a firmly nonexpansive map fail even $\CD(0,\infty)$.

\subsection{Faces of convex functions and their measure class}
Let $f:\R^n\to(-\infty,\infty]$ be proper, lower-semicontinuous, and convex. Its subdifferential is maximal monotone \cite{r31}; its inverse values are the exposed affine contact sets
\begin{equation*}
F_p=(\partial f)^{-1}(p)=\{x\in \R^n\mid f(x)+f^*(p)=\ip{p}{x}\},
\end{equation*}
where $f^*(p)=\sup\{\ip{p}{q}-f(q)\mid q\in \R^n\}$, $p\in\R^n$, is the Legendre transform of $f$.

Whenever $f$ is finite on an open set $\Omega$, we use a Borel
extension of $Df$ from its differentiability set in $\Omega$
to $\R^n$. Its values elsewhere do not affect disintegrations
of absolutely continuous measures concentrated on $\Omega$.

\begin{corollary}[Measure contraction property on convex-gradient fibres]\label{cor:5.7}
Under the ambient assumptions of Theorem~\ref{thm:4.1}, suppose that $f$ is finite on $\Omega=\ri X$. Disintegrate $\mu$ by $Df$ on its differentiability set in $\Omega$. For almost every label $p$, with $k=\dim F_p$,
\begin{equation*}
\supp\mu_p=F_p\cap X,\quad\mu_p\sim\Hh^k|_{F_p\cap X},\quad
\mu_p(\ri F_p\cap\Omega)=1,
\end{equation*}
and $(F_p\cap X,\norm{\cdot},\mu_p)$ satisfies $\MCP(\kappa,N)$.
\end{corollary}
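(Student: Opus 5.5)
The plan is to reduce the corollary to Theorem~\ref{thm:5.5}, applied to the maximal monotone relation $A=\partial f$. The proof has four steps.

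\textbf{Step 1: maximal monotonicity.} By \cite{r31}, $\partial f$ is maximal monotone. Its inverse values are exactly the contact sets $F_p=(\partial f)^{-1}(p)$ recalled above.

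\textbf{Step 2: the domain hypothesis.} We must verify $\mu(\R^n\setminus\dom\partial f)=0$. Since $f$ is convex and finite on the open convex set $\Omega=\ri X$, it is locally Lipschitz there. Hence $\partial f(x)\ne\emptyset$ for every $x\in\Omega$, so $\Omega\subset\dom\partial f$. Because $X$ is convex, $\lambda(\partial X)=0$, and $\mu\ll\lambda$ is concentrated on $X$. Therefore $\mu(\R^n\setminus\Omega)=0$, which gives the required hypothesis.

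\textbf{Step 3: the gradient is an admissible selection.} Let $\Omega_D\subset\Omega$ be the differentiability set of $f$. This set is Borel, and by Rademacher's theorem $\lambda(\Omega\setminus\Omega_D)=0$. For $x\in\Omega_D$ we have $\partial f(x)=\{Df(x)\}$. Hence the Borel extension of $Df$ agrees $\mu$-almost everywhere with any Borel selection $T$ of $\partial f$ (one exists by Lemma~\ref{lem:5.1}). Since the pushforward and the normalised disintegration depend only on the $\mu$-a.e.\ class of the labelling map, the disintegration by $Df$ coincides with the disintegration by $T$. This is exactly the selection-independence clause of Theorem~\ref{thm:5.5}. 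We only need to check that $Df$ remains Borel after the extension, which holds because $\Omega_D$ is Borel and $Df$ is a pointwise limit of difference quotients there.

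\textbf{Step 4: reading off the conclusions.} Theorem~\ref{thm:5.5} now gives, for $\eta$-almost every $p$:
\begin{itemize}
\item $\supp\mu_p=Y_p=F_p\cap X$;
\item $\mu_p\sim\Hh^k|_{Y_p}$ with $k=\dim F_p$;
\item $\mu_p(\ri F_p\cap\ri X)=1$;
\item $\MCP(\kappa,N)$ for $(Y_p,\norm{\cdot},\mu_p)$.
\end{itemize}
Since $\ri X=\Omega$, these are exactly the assertions of the corollary.

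The only step needing care is Step~2, specifically nonemptiness of the subdifferential on all of $\Omega$ rather than only at differentiability points. This follows from the standard fact that a proper convex function finite on an open set is continuous there and has a supporting affine minorant at each interior point of its domain. If that argument were unavailable, Rademacher's theorem alone would still yield $\mu(\R^n\setminus\dom\partial f)=0$, since $\Omega_D\subset\dom\partial f$ and $\mu(\R^n\setminus\Omega_D)=0$; so Step~2 is robust either way. The remaining steps are bookkeeping with negligible sets.
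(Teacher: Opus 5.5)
Your proposal is correct and follows the paper's proof. Like the paper, you apply Theorem~\ref{thm:5.5} to $A=\partial f$, after checking that $\partial f$ is nonempty on $\Omega$ (so $\mu(\R^n\setminus\dom\partial f)=0$, because $\lambda(\partial X)=0$) and that $Df$ agrees $\mu$-almost everywhere with any Borel selection of $\partial f$, so that the two disintegrations coincide. Your remark that the disintegration depends only on the $\mu$-almost-everywhere class of the labelling map spells out a step the paper leaves implicit.
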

\begin{proof}
Since $\Omega\subset\ri\dom f$, the subdifferential is nonempty there, and $f$ is locally Lipschitz and differentiable almost everywhere; see \cite{r30}. Thus $\mu(\dom\partial f)=\mu(X)$. A Borel selection of $\partial f$ agrees with $Df$ at every differentiability point. Apply Theorem~\ref{thm:5.5} to $A=\partial f$.
\end{proof}
By restriction and weighting, the preceding result also identifies the conditional measure class without any curvature assumption on the ambient measure.

\begin{corollary}[Conditional measure class on convex-gradient fibres]\label{cor:5.8}
Let $\Omega\subset\R^n$ be open and convex, and let $f$ be proper, lower-semicontinuous, convex, and finite on $\Omega$. Let $\mu=h\lambda|_{\Omega}$ be finite, where $h\ge0$ is Borel, and write $\mu=\int\mu_p\dd\eta(p)$ for its disintegration with respect to $Df$. For $\eta$-almost every $p$, with $k=\dim F_p$, one has
\begin{equation}\label{eq:5.12}
\mu_p\sim\Hh^k|_{F_p\cap\Omega\cap h^{-1}((0,\infty))},\qquad
\mu_p(\ri F_p\cap\Omega)=1.
\end{equation}
If $X=\cl\Omega$ and $h>0$ almost everywhere on $\Omega$, then $\supp\mu_p=F_p\cap X$ for almost every $p$.
\end{corollary}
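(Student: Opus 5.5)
The plan is to reduce Corollary~\ref{cor:5.8} to Corollary~\ref{cor:5.7} by localisation and change of density, via Lemma~\ref{lem:2.2}. The curvature-free statement concerns only measure classes, so it suffices to find a convenient reference measure satisfying $\MCP$ and transfer the conclusion to $\mu$.

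First, exhaust $\Omega$ by an increasing sequence of compact convex bodies $X_j=\{x\in\Omega\mid \dist(x,\R^n\setminus\Omega)\ge1/j,\ \norm{x}\le j\}$; these are closed, convex, have nonempty interior for large $j$, and $\bigcup_j\ri X_j=\Omega$. On each $X_j$ take the normalised Lebesgue measure $\mu^j=\lambda|_{X_j}/\lambda(X_j)$. It satisfies $\CD(0,n)$, hence $\MCP(0,n)$ with constant density, so Theorem~\ref{thm:4.1}'s hypotheses hold with $N=n$ when $n\ge2$. For $n=1$ use $N=2$ with an affine weight $(1+\text{dist to an endpoint})$, or argue directly: fibres are intervals and the claim is elementary. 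Since $f$ is finite on $\Omega\supset X_j$, Corollary~\ref{cor:5.7} applies. For almost every label $p$ of the disintegration of $\mu^j$ by $Df$, the conditional $\mu^j_p$ is equivalent to $\Hh^k|_{F_p\cap X_j}$ and gives full mass to $\ri F_p\cap\ri X_j$.

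Next apply Lemma~\ref{lem:2.2} with $\gamma=\mu^j$ and $a=\lambda(X_j)h\mathbf 1_{X_j}$. The conditionals of $h\lambda|_{X_j}$ are $A(p)^{-1}a\,\mu^j_p$ on $\{A>0\}$, which is exactly the set of labels charged by the new quotient. These are equivalent to $\Hh^k|_{F_p\cap X_j\cap\{h>0\}}$ and still give full mass to $\ri F_p\cap\Omega$. To pass from $X_j$ to $\Omega$, apply Lemma~\ref{lem:2.2} once more, now to $\mu$ itself with $a=\mathbf 1_{\ri X_j}$. The conditional of $\mu|_{\ri X_j}$ is $\mu_p|_{\ri X_j}/\mu_p(\ri X_j)$ whenever $\mu_p(\ri X_j)>0$. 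By uniqueness of disintegration this agrees with the previous description, up to the $\partial X_j$-negligible difference; note $\lambda(\partial X_j)=0$ and $\mu_p(\partial X_j)=0$ a.e.\ by the previous step. Hence for $\eta$-a.e.\ $p$ and every $j$ with $\mu_p(\ri X_j)>0$, we have $\mu_p|_{\ri X_j}\sim\Hh^k|_{F_p\cap\ri X_j\cap\{h>0\}}$. Discarding countably many null label sets and letting $j\to\infty$ yields \eqref{eq:5.12}, since $\mu_p(\Omega)=1$ and $\mu_p(\ri X_j)\uparrow1$. Here $\dim F_p=k$ is intrinsic, so the exponent is the same for all $j$.

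For the support statement, assume $h>0$ a.e. Then the measure class is $\Hh^k|_{F_p\cap\Omega}$ up to a null set. Since $F_p\cap\Omega$ is nonempty (it carries mass) and relatively open in the convex $F_p$, its closure is $F_p\cap X$, which gives $\supp\mu_p=F_p\cap X$. The main obstacle I expect is the bookkeeping in the localisation: conditionals of $\mu|_{\ri X_j}$ must be matched simultaneously for all $j$ with the $j$-th Lebesgue disintegration. Labels where $h$ vanishes on the whole fibre must also be handled, and Lemma~\ref{lem:2.2}'s restriction to $\{A>0\}$ is exactly what discards these as $\eta$-null.
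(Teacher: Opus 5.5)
Your argument is correct, but it follows a different route from the paper. The paper uses no exhaustion. It takes a single reference measure $dm=e^{-\psi}\dd\lambda|_X$ on $X=\cl\Omega$, with $\psi(x)=\sqrt{1+\norm{x}^2}$. This measure is finite even for unbounded $\Omega$, has positive density and full support, and satisfies $\MCP(-n,n+1)$ by a short estimate: $\psi$ is $1$-Lipschitz, and $\sinh(tr)/\sinh r\le\min\{t,e^{-(1-t)r}\}$. One application of Corollary~\ref{cor:5.7} to $m$ and one change of density (Lemma~\ref{lem:2.2} with $a=\mathbf 1_\Omega he^{\psi}$) then give \eqref{eq:5.12} directly. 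The support statement comes for free: if $h>0$ almost everywhere, then $a>0$ $m$-almost everywhere, so $\eta$ is equivalent to the quotient of $m$ and $\mu_p\sim m_p$, whose support is $F_p\cap X$.

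Your exhaustion by compact convex bodies with normalised Lebesgue measure needs no weight or curvature computation. It only needs $\MCP(0,n)$; for $n=1$, $\MCP(0,2)$ holds directly since $t\ge t^2$, so no affine weight is required. The price is the gluing step. Writing $\vartheta_j$ for the quotient of $\mu^j$, you must match the conditionals of $\mu|_{\ri X_j}=h\lambda|_{X_j}$ with the $j$-th reference disintegration for countably many $j$ simultaneously, via the quotient identity $\mu_p(\ri X_j)\dd\eta=A\dd\vartheta_j$, and you must control $F_p\cap\partial X_j$. You handle this correctly.

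One step in the support statement should be made explicit. The claim ``the measure class is $\Hh^k|_{F_p\cap\Omega}$ up to a null set'' requires $\Hh^k(F_p\cap\Omega\cap\{h=0\})=0$ for $\eta$-almost every $p$. This does not follow from $\lambda(\Omega\cap\{h=0\})=0$ alone, because a Lebesgue-null set can be large on individual fibres. It does follow from your reference disintegrations. The set $\{h=0\}\cap X_j$ is $\mu^j$-null, hence $\mu^j_p$-null for $\vartheta_j$-almost every $p$. Since $\mu^j_p\sim\Hh^k|_{F_p\cap X_j}$, it is therefore $\Hh^k$-null on $F_p\cap X_j$. The exceptional labels transfer to an $\eta$-null set among those with $\mu_p(\ri X_j)>0$, exactly as in your gluing. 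This is the counterpart of the paper's observation that $a>0$ holds $m_p$-almost everywhere for almost every label.
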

\begin{proof}
The assertion is void if $\mu=0$. Put $X=\cl\Omega$ and
\begin{equation*}
   \psi(x)=\sqrt{1+\norm{x}^2},\quad
 dm=e^{-\psi}\dd\lambda|_X.  
\end{equation*}
This is a finite measure with positive density and support $X$.
We first verify $\MCP(-n,n+1)$ directly. Since $\psi$ is
$1$-Lipschitz, for $o,x\in X$, $r=\norm{x-o}$ and $0<t\le1$,
\begin{equation*}
 e^{-\psi(H_{o,t}(x))}
 \ge e^{-(1-t)r}e^{-\psi(x)}.
\end{equation*}
For $r>0$, convexity of $\sinh$ and its exponential formula give,
respectively,
\begin{equation*}
 \frac{\sinh(tr)}{\sinh r}\le t,
 \quad
 \frac{\sinh(tr)}{\sinh r}\le e^{-(1-t)r}.
\end{equation*}
Consequently,
\begin{equation*}
 \beta_{-n,n+1}^{(t)}(r)
 =t\left(\frac{\sinh(tr)}{\sinh r}\right)^n
 \le t^n e^{-(1-t)r}.
\end{equation*}
The same inequality holds at $r=0$ by continuity.
Multiplication by the Jacobian $t^n$ and integration now give
\eqref{eq:2.5} for $m$.

Apply Corollary~\ref{cor:5.7} to $m$ and write
$m=\int m_p\dd\vartheta(p)$. For $\vartheta$-almost every $p$,
with $k=\dim F_p$,
\begin{equation*}
 m_p\sim\Hh^k|_{F_p\cap X},\quad
 \supp m_p=F_p\cap X,\quad
 m_p(\ri F_p\cap\Omega)=1.
\end{equation*}
Extend $h$ by zero outside $\Omega$, and put
\begin{equation*}
 a=\mathbf{1}_\Omega h e^\psi,\qquad A(p)=\int a\dd m_p.
\end{equation*}
Since $d\mu=a\dd m$, Lemma~\ref{lem:2.2} gives
\begin{equation*}
 d\eta=A\dd\vartheta,\quad
 d\mu_p=\frac{a}{A(p)}\dd m_p
 \quad\text{for }\eta\text{-almost every }p.
\end{equation*}
Here $0<A(p)<\infty$ for $\eta$-almost every $p$.
The equivalence and concentration assertions in \eqref{eq:5.12}
follow immediately.
If $h>0$ almost everywhere in $\Omega$, then $a>0$ $m$-almost
everywhere, since $\lambda(\partial X)=0$. Disintegration gives
$a>0$ $m_p$-almost everywhere for $\vartheta$-almost every $p$.
Thus $\eta\sim\vartheta$ and $\mu_p\sim m_p$ at almost every
label, proving $\supp\mu_p=F_p\cap X$.
\end{proof}
The same measure-class statement holds on the corresponding graph faces.

\begin{corollary}[Graph-face measure class]\label{cor:5.9}
Let $f:\R^n\to\R$ be finite and convex and let $G$ be its graph. For $p\in \R^n$ put $F_p=(\partial f)^{-1}(p)$, and for $x\in\R^n$ set $\Gamma_f(x)=(x,f(x))$. Let  $G_p=\Gamma_f(F_p)$ for $p\in\R^n$. If $\omega$ is a strictly positive Borel function on the graph of $f$ with $0<\int_{G}\omega\dd\Hh^n<\infty$, then the normalised conditionals of $\omega\,\Hh^n|_G$
with respect to $Df\circ\Gamma_f^{-1}$ are equivalent to
$\Hh^{\dim G_p}|_{G_p}$ for
$(Df\circ\Gamma_f^{-1})_\#(\omega\,\Hh^n|_G)$-almost every $p$. In particular, their measure class is independent of $\omega$. 
\end{corollary}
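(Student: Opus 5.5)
The plan is to transport the problem from the graph $G$ to the base $\R^n$ through the bijection $\Gamma_f$, and then apply Corollary~\ref{cor:5.8} with $\Omega=\R^n$. Since $f$ is finite and convex on $\R^n$, it is locally Lipschitz. Hence $\Gamma_f:\R^n\to G$ is a locally bi-Lipschitz homeomorphism, with inverse the coordinate projection $\pi_x|_G$.

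\textbf{Step 1: pull the measure back to the base.} By the area formula, for Borel $E\subset\R^n$,
\begin{equation*}
 (\omega\Hh^n|_G)(\Gamma_f(E))=\int_E\omega(\Gamma_f(x))\sqrt{1+\norm{Df(x)}^2}\dd\lambda(x),
\end{equation*}
since the Jacobian of $\Gamma_f$ at a differentiability point is $\sqrt{1+\norm{Df}^2}$. So $(\pi_x)_\#(\omega\Hh^n|_G)=h\lambda$, where $h=(\omega\circ\Gamma_f)\sqrt{1+\norm{Df}^2}$. This $h$ is a positive Borel function, defined almost everywhere, and $h\lambda$ is finite by assumption. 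The labelling map $Df\circ\Gamma_f^{-1}$ on $G$ corresponds to $Df$ on $\R^n$ under this identification. Because $\Gamma_f$ is a Borel isomorphism, disintegrations correspond. The quotient measures coincide, and the conditionals of $\omega\Hh^n|_G$ are $(\Gamma_f)_\#\mu_p$, where $\mu=h\lambda=\int\mu_p\dd\eta(p)$ is the disintegration of Corollary~\ref{cor:5.8}. This follows from uniqueness of disintegration, as in Lemma~\ref{lem:2.2}.

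\textbf{Step 2: apply Corollary~\ref{cor:5.8}.} With $\Omega=\R^n$ and $h>0$ almost everywhere, for $\eta$-almost every $p$ we get $\mu_p\sim\Hh^k|_{F_p}$, where $k=\dim F_p$.

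\textbf{Step 3: transfer the measure class through $\Gamma_f$.} On $F_p$ the function $f$ is affine: $f(x)=\ip{p}{x}-f^*(p)$. So $\Gamma_f|_{F_p}$ is an affine injection onto $G_p$, and $\dim G_p=k$. Its $k$-dimensional Jacobian is a positive constant, namely $\sqrt{\det(\Id+P^*pp^*P)}$, where $P$ is the inclusion of $V_{F_p}$. The area formula then gives $(\Gamma_f)_\#\Hh^k|_{F_p}=c_p^{-1}\Hh^k|_{G_p}$ with $0<c_p<\infty$. Therefore $(\Gamma_f)_\#\mu_p\sim\Hh^{\dim G_p}|_{G_p}$. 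Independence of the class from $\omega$ is then immediate, because the reference measure $\Hh^{\dim G_p}|_{G_p}$ does not involve $\omega$.

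\textbf{Main obstacle.} Step 1 needs the most care. One must check that the labels agree almost everywhere: $Df\circ\Gamma_f^{-1}$ is defined only on the image of the differentiability set, so it should be extended by a Borel selection of $\partial f$. One must also make sure that the $\eta$-null exceptional sets are the same for both quotients. Both points hold because the quotients coincide exactly and $\Gamma_f$ is a Borel isomorphism.
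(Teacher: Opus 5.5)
Your proposal is correct and follows the paper's proof essentially step for step. Like the paper, you pull the weighted graph measure back through $\Gamma_f$ by the area formula to the density $\omega(\Gamma_f)\sqrt{1+\norm{Df}^2}$, apply Corollary~\ref{cor:5.8} with $\Omega=\R^n$, and push the class of $\Hh^{\dim F_p}|_{F_p}$ forward by the affine injection $\Gamma_f|_{F_p}$, whose tangential Jacobian is constant; your treatment of the Borel extension of the labels and of the correspondence of disintegrations only makes explicit what the paper leaves implicit.
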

\begin{proof}
The local area formula gives
\begin{equation*}
\Hh^n|_{G}=(\Gamma_f)_\#\bigl(\sqrt{1+\norm{Df}^2}\,\lambda\bigr).
\end{equation*}
Thus the pullback of the weighted graph measure has the strictly positive integrable density $\omega(\Gamma_f)\sqrt{1+\norm{Df}^2}$. Apply Corollary~\ref{cor:5.8} with $\Omega=\R^n$. On $F_p$, the map $\Gamma_f$ is affine and injective, with a constant positive tangential Jacobian. Its pushforward therefore takes the class of $\Hh^{\dim F_p}|_{F_p}$ to that of $\Hh^{\dim G_p}|_{G_p}$.
\end{proof}
Corollary~\ref{cor:5.9} recovers the measure-equivalence theorem of
Caravenna and Daneri \cite[Theorem~3.3 and Remark~3.5]{r11}:
the conditional measures obtained by disintegrating Hausdorff
measure on the graph of a finite convex function along its faces
are equivalent to the Hausdorff measures of the corresponding
dimensions on those faces. The same conclusion for a strictly
positive integrable weight follows by the change-of-density
formula in Lemma~\ref{lem:2.2}.

Their argument also provides quantitative information beyond
measure equivalence. In particular, \cite[Remark~4.16,
Proposition~4.17, and equations~(4.70)--(4.72)]{r11} gives
positivity, local Lipschitz regularity and two-sided
endpoint-distance estimates for the geometric density appearing
in their local disintegration formula
\cite[Theorem~4.18]{r11}. After localisation and passage to
continuous representatives, these estimates give, in our notation,
\begin{equation*}
 g_{F_p}(o+t(x-o))\ge t^{n-k}g_{F_p}(x)
 \text{ for }o\in F_p,\ x\in\ri F_p,\ 0<t\le1,
\end{equation*}
on almost every fibre $F_p=(\partial f)^{-1}(p)$ of dimension $k$.
Combining this inequality with the ambient density distortion
in Lemma~\ref{lem:2.1}, the tangential Jacobian $t^k$, and
normalisation gives the $\MCP(\kappa,N)$ conclusion of
Corollary~\ref{cor:5.7}, with unchanged parameters.
This MCP inheritance is a consequence of their quantitative
estimates, rather than a theorem formulated in those terms
in \cite{r11}.

Our argument treats convex-gradient fibres as a special case of
the general maximal monotone theorem, Theorem~\ref{thm:5.5}.
It identifies inverse fibres with translated resolvent leaves
by Lemma~\ref{lem:5.2}, applies the isometric-leaf inheritance
theorem, Theorem~\ref{thm:4.1}, and passes to the limit using
Proposition~\ref{prop:5.4} and Lemma~\ref{lem:4.2}.

\section{Curvature-dimension condition inheritance in codimension one}\label{sec:curvature}
The distortion estimate of Theorem~\ref{thm:3.1} gives measure contraction property on every dimensional stratum. Curvature-dimension condition inheritance requires a stronger control of the geometric density. In codimension one, the local ordering of disjoint affine hypersurface sections forces this density to be affine, without any transverse regularity assumption. This gives curvature-dimension condition inheritance in codimension one and on every leaf stratum in ambient dimensions one and two.

\subsection{Weighted curvature}

We recall the Euclidean characterisation of $\CD(\kappa,N)$ for
positive $\mathcal C^2$ densities. 

Let $X$ be a nonempty closed convex subset of a Euclidean space,
let $E=\Aff X$, and put $k=\dim X\ge1$ and $\Omega=\ri X$.
Consider a finite measure
\begin{equation}\label{eq:6.1}
d\mu=e^{-\rho}\dd\Hh^k|_X
\text{ with }\rho\in\mathcal C^2(\Omega).
\end{equation}
All derivatives below are intrinsic to $E$. No regularity of
$\rho$ at the relative boundary of $X$ is assumed.

For $N\in(k,\infty]$, the $N$-Bakry--\'Emery tensor is
\begin{equation}\label{eq:6.2}
\Ric_{\mu,N}
=
\begin{cases}
D^2\rho-\dfrac{D\rho D\rho^*}{N-k},
    & k<N<\infty,\\[6pt]
D^2\rho, & N=\infty.
\end{cases}
\end{equation}
In this setting, $(X,\norm{\cdot},\mu)$ satisfies
$\CD(\kappa,N)$ if and only if
\begin{equation*}
\Ric_{\mu,N}(x)(v,v)\ge\kappa\norm v^2
\text{ for all }x\in\Omega,\ v\in E-E.
\end{equation*}
For $N=k$, the condition requires $\rho$ to be constant and
$\kappa\le0$; in that case we set $\Ric_{\mu,k}=0$.
A measure supported on a singleton satisfies $\CD(\kappa,N)$
for every $\kappa\in\R$ and $N\in[1,\infty]$.

The equivalence with the synthetic curvature-dimension condition
is the weighted Riemannian characterisation; see
\cite[Theorem~4.22]{r27}. For the closed convex supports used here,
necessity follows from the local argument in the relative
interior. For sufficiency, the same optimal-transport proof
applies on compact convex subsets of $\Omega$, and exhaustion
and stability give the assertion on $X$. The differential
argument requires only $\mathcal C^2$ regularity of $\rho$;
no regularity at the relative boundary is needed.
In particular, $\CD(0,\infty)$ is equivalent to log-concavity
of the density. Multiplication of $\mu$ by a positive constant
does not affect the condition.

\subsection{Affine density in codimension one}
\begin{theorem}[Affine density in codimension one]\label{thm:6.2}
Let $n\ge2$, let $u:\R^n\to\R^m$ be $1$-Lipschitz, and let $d\mu=h\dd\lambda|_X$ be finite, where $X$ is closed and convex with nonempty interior $\Omega$ and $0<h<\infty$ almost everywhere on $X$. For almost every leaf $\S$ of dimension $n-1$, there is an affine function $p_{\S}$ on $\Aff\S$, positive on $\ri\S\cap\Omega$ and nonnegative on $\S\cap X$, such that
\begin{equation}\label{eq:6.5}
d\mu_{\S}=hp_{\S}\dd\Hh^{n-1}|_{\S\cap X}.
\end{equation}
\end{theorem}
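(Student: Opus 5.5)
By Theorem~\ref{thm:3.1}, applied to the density $h\mathbf 1_X$, it suffices to prove one statement: on almost every $(n-1)$-dimensional leaf, the geometric density $g_{\S}$ coincides on $\ri\S$ with a positive multiple of an affine function $p_{\S}$. Formula \eqref{eq:6.5} then follows from \eqref{eq:3.3} after absorbing the normalisation $m_{\S}$ into $p_{\S}$. Positivity of $p_{\S}$ on $\ri\S$ comes from positivity of $g_{\S}$. Nonnegativity on $\S\cap X$ follows by continuity of the affine extension.

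To obtain affinity, I work in a chart of Proposition~\ref{prop:3.4} with $k=n-1$, so $d=1$. Here the transverse slices $\Sigma_q$ are subsets of lines $P_V^{-1}(q)$, and the anchor set $A$ is a compact subset of a line. The key step is to sharpen Step~1 of Lemma~\ref{lem:3.5} from an inequality to an identity. Consider two anchors $a<a'$ on this line, and let $E$ be the portion of $A$ between them. For each $q$, the points $\Phi(a,q)$ and $\Phi(a',q)$ lie on the line $P_V^{-1}(q)$. The whole family of leaves $\S_b$, $b\in E$, is squeezed between the hyperplane sections $\ri\S_a$ and $\ri\S_{a'}$, because distinct leaves have disjoint relative interiors. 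Codimension one forces them to cross each line in the order of their anchors.

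Let $I_q$ denote the segment $[\Phi(a,q),\Phi(a',q)]$. Its length $\norm{\Phi(a,q)-\Phi(a',q)}$ is an affine function of $q$, since $\Phi(a,\cdot)$ and $\Phi(a',\cdot)$ are affine. The measure $M_q(E)$ is the one-dimensional measure of $\Sigma_q\cap I_q$. I then want to show that the complement $I_q\setminus\Sigma_q$ corresponds to a family of leaves that is either negligible or also ordered. This would make $M_q(E)$ equal to the length of $I_q$ minus the contribution of the gaps. The leaves of lower dimension, and the points of $N(u)$ filling the gaps, have measure zero only in the ambient space, not on every slice; handling this is the main obstacle.

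My first attempt is to take the difference $M_q(E)-\ell_q$, where $\ell_q$ is the segment length. By the lemma this difference satisfies a concavity-type inequality in $q$ on lines. The same reasoning applied to the gaps, which are themselves unions of $(n-1)$-leaves or negligible sets, gives the reverse inequality. I would first try to show directly that $q\mapsto M_q(E)$ is both concave and convex along segments. Concavity should follow from the codimension-one Lemma~\ref{lem:3.2} with an exact area formula: for $d=1$ the interpolating segments between ordered families are monotone, and lengths interpolate linearly. Convexity should follow from applying the same argument to the complementary ordered family between $a$ and $a'$. Together these give affinity of $q\mapsto M_q(E)$ for every interval $E$, hence for all Borel $E$. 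The density $f(a,\cdot)$ is then affine for $\zeta$-a.e.\ $a$. By \eqref{eq:3.14} and the overlap argument in the proof of Theorem~\ref{thm:3.1}, the affine pieces glue to a single affine function on $\Aff\S$, since affine functions agreeing on open sets coincide.
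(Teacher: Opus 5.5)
Your reduction to affinity of $g_{\S}$, the ordering of the sections, and the closing steps are sound. Those closing steps are: extending from intervals to Borel sets by a monotone class argument, deducing a.e. affinity of $f(a,\cdot)$ from $M_q(E)=\int_E f(a,q)\dd\zeta(a)$, and gluing via \eqref{eq:3.14}. Proving that $q\mapsto M_q(E)$ is affine would also be a legitimate route. But that central step is not established.

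Lemma~\ref{lem:3.2} with $d=1$ yields only $M_{p_s}\ge(1-s)M_{p_0}$. This is weaker than concavity, because the term $sM_{p_1}$ is missing. Your convexity half assumes that the gaps form a ``complementary ordered family'' of $(n-1)$-dimensional leaves or negligible sets, to which Lemma~\ref{lem:3.2} could again be applied. This is false in general. The part of the slab between $\S_a$ and $\S_{a'}$ not swept by the chart can contain leaves discarded when the compact set $A$ was chosen, and lower-dimensional leaves. When $m\ge n$ it can even contain open pieces of $n$-dimensional leaves. For example, take $u(x,y)=(x,\psi(y))$ on $\R^2$ with $\psi(y)=\min\{\max\{y,1\},2\}$, and anchors on both sides of the two-dimensional leaf $\R\times[1,2]$. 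On such a set there is no correspondence $\tau$ satisfying the hypotheses of Lemma~\ref{lem:3.2}, and you yourself leave this ``main obstacle'' open.

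The missing observation is that you never need to know what fills the gaps. Label anchors by their coordinate along $V^\perp$, so that $\Phi(b,q)=(q,b+\ip{\ell_b}{q-q_*})$. Put $\Delta_{b,b'}(q)=b'-b+\ip{\ell_{b'}-\ell_b}{q-q_*}$; by your ordering this is positive on $D$ for $b<b'$ in $A$. Now take $a<a'$ in $A$ and $E=A\cap[a,a']$. The set $[a,a']\setminus E$ is a countable disjoint union of open intervals $(b_i,b_i')$ with $b_i,b_i'\in A$. Since $b\mapsto b+\ip{\ell_b}{q-q_*}$ is strictly increasing on $A$, the complement of $\Phi_q(E)=\Sigma_q\cap I_q$ in $I_q$ is exactly the union of the open segments joining $\Phi(b_i,q)$ to $\Phi(b_i',q)$. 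Hence
\[
M_q(E)=\Delta_{a,a'}(q)-\sum_i\Delta_{b_i,b_i'}(q).
\]
The series is bounded by $\Delta_{a,a'}(q)$, so this is a pointwise convergent series of affine functions and therefore affine. You get an identity directly, with no concavity and convexity pair needed; this is what ``lengths interpolate linearly'' really amounts to, and with it your argument closes.

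The paper takes a different route. It proves the slope bound \eqref{eq:6.6}, extends $a\mapsto\ell_a$ to a Lipschitz function, and reads off $f(a,q)=1+\ip{D\ell_a}{q-q_*}$ from the area formula. That gives the affine factor explicitly, whereas the gap decomposition avoids differentiating in the anchor variable.
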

\begin{proof}
We first show that, for $\nu_0$-almost every $(n-1)$-dimensional
leaf, the geometric density $g_{\S}$ of Theorem~\ref{thm:3.1}
extends to an affine function on $\Aff\S$.

Fix a chart of positive Lebesgue measure from
Proposition~\ref{prop:3.4}. 
Since $V$ has codimension one, choose orthonormal coordinates
identifying $V$ with $\R^{n-1}\times\{0\}$ and $V^\perp$ with
the last coordinate axis. Every anchor has the form $(q_*,a)$;
we identify the compact anchor set with its scalar coordinates
$A\subset\R$ and relabel the chart accordingly.
The identity $P_VL_a=\Id_V$ implies that there is a unique
$\ell_a\in\R^{n-1}$ such that
\begin{equation*}
 L_av=(v,\ip{\ell_a}{v})
 \text{ for }v\in\R^{n-1}.
\end{equation*}
Writing the original anchor as $b=(q_*,a)$, we have
\begin{equation*}
 \Phi(b,q)=b+L_b(q-q_*)=(q_*,a)+(q-q_*,\ip{\ell_a}{q-q_*})=\bigl(q,a+\ip{\ell_a}{q-q_*}\bigr).
\end{equation*}
We henceforth label the chart by the scalar anchor coordinate $a$
and retain the notation $\Phi(a,q)$.
Consequently, the chart takes the form
\begin{equation*}
 \Phi(a,q)=\bigl(q,a+\ip{\ell_a}{q-q_*}\bigr)
 \text{ for }a\in A,\ q\in D.
\end{equation*}
Thus $a$ is the height of the anchor at $q_*$, while $\ell_a$
is the slope of the affine leaf section.

For $a<b$ in $A$, disjointness of the sections implies that
\begin{equation*}
 q\longmapsto b-a+\ip{\ell_b-\ell_a}{q-q_*}
\end{equation*}
is positive throughout $D$: it never vanishes there and is positive
at $q_*$. If $B(q_*,r)\subset D$, taking the infimum over this
ball gives
\begin{equation}\label{eq:6.6}
 \norm{\ell_b-\ell_a}\le r^{-1}|b-a|.
\end{equation}
Extend $a\mapsto\ell_a$ to a Lipschitz function on $\R$.
At almost every Lebesgue density point $a$ of $A$, its derivative
$D\ell_a$ exists and is independent of the extension. Set for $q\in D$
\begin{equation*}
     p_a(q)=1+\ip{D\ell_a}{q-q_*}.
\end{equation*}
Difference quotients along $A$ of the increasing functions
$a\mapsto a+\ip{\ell_a}{q-q_*}$ show that $p_a(q)\ge0$
for every $q\in D$. Since $p_a$ is affine and $p_a(q_*)=1$,
it is strictly positive on the open set $D$.

Put $Z=\Phi(A\times C)$. The map $\Phi$ is Lipschitz on
$A\times C$ and injective. The area formula \cite{r21} gives
\begin{equation}\label{eq:6.7}
 d(\Phi^{-1})_\#(\lambda|_Z)
 =p_a(q)\dd\bigl((\lambda^1|_A)\otimes(\lambda^{n-1}|_C)\bigr).
\end{equation}
Comparing with \eqref{eq:3.9}, where $\zeta=\lambda^1|_A$,
shows that $f(a,q)=p_a(q)$ almost everywhere. Both functions are
continuous in $q$ for almost every $a$, so the equality holds for
every $q\in C$ at those anchors. Equation~\eqref{eq:3.14} therefore
gives
\begin{equation}\label{eq:6.8}
 g_{\S_a}(\Phi(a,q))
 =\frac{\sigma_{\S_a}(Z)}{W(a)J_a}\,p_a(q),
 \quad J_a=\sqrt{1+\norm{\ell_a}^2}.
\end{equation}
Every nonempty chart section of a leaf for which
Theorem~\ref{thm:3.1} holds has positive $\sigma_{\S}$-mass.
Thus \eqref{eq:3.12} transfers the exceptional anchors to a
$\nu_0$-negligible family of leaves. Negligible chart images can also be
discarded. Since the charts are countable and cover the relative
interior of almost every leaf, $g_{\S}$ is locally affine on
$\ri\S$ for $\nu_0$-almost every leaf of dimension $n-1$.
Connectedness of $\ri\S$ shows that these local affine expressions
are restrictions of a single affine function on $\Aff\S$.
Its extension is nonnegative on $\S$ by continuity.

Finally, $\nu=\S_\#\mu\ll\nu_0$. For $\nu$-almost every leaf
of dimension $n-1$, Theorem~\ref{thm:3.1} gives
\begin{equation*}
 0<m_{\S}=\int_{\S\cap X}h g_{\S}\dd\Hh^{n-1}<\infty.
\end{equation*}
Define $p_{\S}=g_{\S}/m_{\S}$ on $\Aff\S$.
It is affine, positive on $\ri\S\cap \Omega$ and nonnegative on
$\S\cap X$, and \eqref{eq:3.3} gives \eqref{eq:6.5}.
\end{proof}

\begin{corollary}[Curvature-dimension condition inheritance in codimension one]\label{cor:6.3}
In Theorem~\ref{thm:6.2}, suppose that $h=e^{-\rho}$ on $\Omega$,
where $\rho\in\mathcal C^2(\Omega)$, and that
$(X,\norm{\cdot},\mu)$ satisfies $\CD(\kappa,N)$ for
$\kappa\in\R$ and $N\in[n,\infty]$. Then almost every
$(n-1)$-dimensional leaf, equipped with its conditional measure
and the Euclidean metric, satisfies $\CD(\kappa,N)$ on $\S\cap X$.
\end{corollary}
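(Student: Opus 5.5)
By Theorem~\ref{thm:6.2}, for almost every $(n-1)$-dimensional leaf $\S$ we have $d\mu_{\S}=e^{-\rho}p_{\S}\dd\Hh^{n-1}|_{\S\cap X}$, with $p_{\S}$ affine on $\Aff\S$, positive on $\ri\S\cap\Omega$ and nonnegative on $\S\cap X$. Put $Y=\S\cap X$ and $k=n-1$. Then $\ri Y=\ri\S\cap\Omega$, so the conditional density is $e^{-\rho_{\S}}$ with
\[
\rho_{\S}=\rho|_{\ri Y}-\log p_{\S}\in\mathcal C^2(\ri Y).
\]
The plan is to verify the Bakry--\'Emery criterion of Section~\ref{sec:curvature} for this density with $k=n-1$ and the same $N$. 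Constant factors are irrelevant there.

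\textbf{Case $N=\infty$.} Write $\phi=-\log p_{\S}$. Since $p_{\S}$ is affine, $D^2\phi=Dp\,Dp^*/p^2\ge0$. The restriction of $D^2\rho$ to $V_{\S}$ is at least $\kappa$, so $D^2\rho_{\S}\ge\kappa$ on $V_{\S}$.

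\textbf{Case $n\le N<\infty$.} Write $a=D\rho(v)$ and $b=D\phi(v)=-Dp(v)/p$ for a tangent vector $v$; then $D^2\phi(v,v)=b^2$. Assume first $N>n$. We need
\[
D^2\rho(v,v)+b^2-\frac{(a+b)^2}{N-n+1}\ge\kappa\norm v^2 .
\]
The hypothesis gives $D^2\rho(v,v)\ge\kappa\norm v^2+a^2/(N-n)$. It therefore suffices to show
\[
\frac{a^2}{N-n}+\frac{b^2}{1}\ge\frac{(a+b)^2}{(N-n)+1},
\]
which is exactly the Cauchy--Schwarz inequality $(\alpha+\beta)^2/(s+t)\le\alpha^2/s+\beta^2/t$. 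The leaf dimension drops by one while the affine factor supplies one unit of dimension, which is why $N$ is unchanged. If $N=n$, then $\rho$ is constant and $\kappa\le0$. The requirement becomes $b^2-b^2/(N-n+1)=0\ge\kappa$, which holds since $N-(n-1)=1$. Here the leaf is in the regime $N>k$, so the finite-$N$ formula applies.

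\textbf{Degenerate leaf.} When $k=n-1=0$, i.e.\ $n=1$, the statement is trivial. We only use $n\ge2$, as in Theorem~\ref{thm:6.2}.

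\textbf{Main obstacle.} I expect the delicate point to be the boundary behaviour rather than the algebra. The affine factor may vanish on $\partial Y$, and $\rho$ has no regularity at $\partial X$. The characterisation recalled in Section~\ref{sec:curvature} requires only $\mathcal C^2$ regularity on the relative interior together with a finite measure on the closed convex support, and $p_{\S}>0$ on $\ri Y$, so it applies directly. I would still check that $\mu_{\S}$ has support $Y$, which follows from Theorem~\ref{thm:4.1}, and that $\ri Y$ is exactly where $p_{\S}>0$ and $\rho$ is $\mathcal C^2$.
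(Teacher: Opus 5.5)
Your proposal is correct and follows the paper's proof. Both arguments take the affine density from Theorem~\ref{thm:6.2}, run the same three-case Bakry--\'Emery computation, and then apply the weighted Euclidean characterisation on the closed convex set $\S\cap X$; your Engel-form Cauchy--Schwarz step is exactly the paper's identity $\Ric_{\mu_{\S},N}(v,v)=\Ric_{\mu,N}(v,v)+(\alpha+s\beta)^2/(s(s+1))$ with $\beta=-b$.

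One small correction: full support of $\mu_{\S}$ on $Y$ follows directly from positivity of $e^{-\rho}p_{\S}$ on the dense set $\ri Y$. So you need not invoke Theorem~\ref{thm:4.1}, and for $N=\infty$ you cannot, since that theorem assumes $N<\infty$.
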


\begin{proof}
Since $\mu(\Omega)=\mu(X)$ and the conditional measures give
full mass to the relative interiors of their leaves,
$\mu_{\S}(\ri\S\cap\Omega)=1$ for almost every leaf. Fix  an $(n-1)$-dimensional leaf $\mathcal{S}$.
 Let $V_{\S}=\Aff\S-\Aff\S$ denote the tangent space of $\S$, and write $W=\rho-\log p_{\S}$ on $\ri\S\cap\Omega$. Suppose first that $n<N<\infty$, and put $s=N-n$. For $v\in V_{\S}$, let $\alpha=D\rho(v)$ and $\beta=D\log p_{\S}(v)$. Since $p_{\S}$ is affine, we have
\begin{equation*}
\Ric_{\mu_{\S},N}(v,v)=D^2\rho(v,v)+\beta^2-\frac{(\alpha-\beta)^2}{s+1}=\Ric_{\mu,N}(v,v)+\frac{(\alpha+s\beta)^2}{s(s+1)}\ge\kappa\norm v^2.
\end{equation*}
For $N=n$, the ambient condition forces $\rho$ to be constant and $\kappa\le0$; then $\Ric_{\mu_{\S},n}=0$. For $N=\infty$,
\begin{equation*}
D^2_{\S}W=D^2_{\S}\rho+D_{\S}\log p_{\S} (D_{\S}\log p_{\S})^*\ge\kappa\Id.
\end{equation*}
The preceding inequalities establish the required
Bakry--\'Emery bound on $\ri S\cap \Omega$.
Since $\S\cap X$ is closed and convex, $W\in\mathcal C^2(\ri \S\cap X)$,
and $\mu_{\S}=e^{-W}\Hh^{n-1}|_Y$, the weighted Euclidean
characterisation above gives $\CD(\kappa,N)$ on $\S\cap X$.
\end{proof}

\begin{corollary}[Ambient dimensions one and two]\label{cor:6.4}
Let $n\le2$, let $X\subset\R^n$ be closed and convex with
nonempty interior $\Omega$, and let
$d\mu=e^{-\rho}\dd\lambda|_X$ be finite, with
$\rho\in\mathcal C^2(\Omega)$. Suppose that
$(X,\norm{\cdot},\mu)$ satisfies $\CD(\kappa,N)$ for
$\kappa\in\R$ and $N\in[n,\infty]$.
For every $1$-Lipschitz map $u:\R^n\to\R^m$,
almost every leaf, equipped with its conditional measure
and the Euclidean metric, satisfies $\CD(\kappa,N)$ on $\S\cap X$.
\end{corollary}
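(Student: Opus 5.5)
The proof splits according to the dimension $k=\dim\S$ of the leaf. Since $\nu=\S_\#\mu$, almost every leaf lies in one of the strata $\T^k$ with $0\le k\le\min\{n,m\}$, and for $n\le2$ we have $k\in\{0,1,2\}$ with $k\le n$. It therefore suffices to check each stratum separately and discard countably many negligible families.

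\textbf{Zero-dimensional leaves.} If $k=0$, the conditional measure is a Dirac mass on a singleton. By the convention recalled in the weighted-curvature subsection, it satisfies $\CD(\kappa,N)$ for every $\kappa$ and $N$.

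\textbf{Leaves of dimension $n$.} If $k=n$ and $n\ge1$, the leaf $\S$ has nonempty interior in $\R^n$. Theorem~\ref{thm:3.1} gives the positive density $g_{\S}$ on $\ri\S$ with exponent $n-k=0$ in \eqref{eq:3.2}, so $g_{\S}(o+t(x-o))\ge g_{\S}(x)$. Corollary~\ref{cor:3.8} then gives $D\log g_{\S}=0$ almost everywhere, since both bounds in \eqref{eq:3.15} equal zero. Hence $g_{\S}$ is constant on the connected set $\ri\S$, and $\mu_{\S}$ is a constant multiple of $e^{-\rho}\lambda|_{\S\cap X}$. Its Bakry--\'Emery tensor equals that of $\mu$ at every point of $\ri\S\cap\Omega$. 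The set $\S\cap X$ is closed and convex, and by Theorem~\ref{thm:4.1} it has $\mu_{\S}(\ri\S\cap\Omega)=1$ and full support. The weighted Euclidean characterisation therefore gives $\CD(\kappa,N)$. In the case $N=n$, $\rho$ is constant and the leaf density is constant as well.

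\textbf{Leaves of dimension $n-1$.} If $n=2$ and $k=1$, the leaf has codimension one, and Corollary~\ref{cor:6.3} applies directly to almost every such leaf. When $n=1$, every leaf has dimension $0$ or $1=n$, so both cases are already covered.

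The only delicate point is the top-dimensional stratum. Constancy of $g_{\S}$ follows from the zero codimension exponent, but one must make sure the Bakry--\'Emery bound is stated with the leaf dimension $k=n$, so that the parameter $N$ is unchanged. This holds because the tensor \eqref{eq:6.2} uses $N-k=N-n$. Combining the three cases yields the corollary.
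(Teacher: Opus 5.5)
Your proposal is correct and follows the paper's proof, which uses the same three-way split: Dirac masses on zero-dimensional leaves, Corollary~\ref{cor:6.3} for one-dimensional leaves when $n=2$, and normalised restrictions of $\mu$ on full-dimensional leaves. The paper simply asserts the last fact, which is elementary because full-dimensional leaves have pairwise disjoint interiors and are therefore countably many atoms of $\nu$; you instead derive it from the zero codimension exponent in Theorem~\ref{thm:3.1}, which is also valid.

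The only slip is your appeal to Theorem~\ref{thm:4.1}. Its hypothesis, $\MCP(\kappa,N)$ with $1<N<\infty$, is unavailable when $N=\infty$ or $N=n=1$. The concentration $\mu_{\S}(\ri\S\cap\Omega)=1$ follows instead from Lemma~\ref{lem:3.3}, $\lambda(\partial X)=0$ and \eqref{eq:3.3}, and full support is automatic because $e^{-\rho}>0$.
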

\begin{proof}
For $n=2$, one-dimensional leaves are covered by
Corollary~\ref{cor:6.3}. Full-dimensional leaves carry normalised
restrictions of the ambient measure to convex sets, while
zero-dimensional leaves carry Dirac masses. These observations
also prove the assertion for $n=1$.
\end{proof}

Thus failure of curvature inheritance requires
positive-dimensional leaves of codimension at least two.
The same conclusion holds without the regularity assumption
on the ambient density, by the extension discussed in the
introduction. The three-dimensional example in
Section~\ref{sec:failure} shows that the ambient dimension
threshold is sharp.

\section{Failure of curvature inheritance}\label{sec:failure}
We shall show that the curvature-dimension condition need not pass to the conditional measures on leaves, even when the map is firmly nonexpansive or is a gradient. In both examples the leaves have codimension two. In the first, the same decomposition is also given by the fibres of a Lipschitz monotone map. In the second, the potential is nonconvex, and its Hessian has both positive and negative eigenvalues along each leaf.

We use the local necessity of the Bakry--\'Emery bound recalled
in Section~\ref{sec:curvature}.
Let $\eta$ have closed convex support $Y$ of dimension $k$,
let $U\subset\ri Y$ be relatively open, and suppose that
$\eta|_U=h\Hh^k|_U$ with $h\in\mathcal C^2(U)$ and $h>0$.

If $(Y,\norm{\cdot},\eta)$ satisfies $\CD(\kappa,N)$ with
$k<N<\infty$, then
\begin{equation}\label{eq:7.1}
-D^2\log h-\frac{D\log h (D\log h)^*}{N-k}
\ge\kappa\Id
\text{ on }U.
\end{equation}
The derivatives are intrinsic to $\Aff Y$.
For $N=\infty$, the second term is omitted.
In particular, a positive directional second derivative of
$\log h$ at a point of $U$ excludes $\CD(0,N)$ for every
$N\in(k,\infty]$.

\subsection{A firmly nonexpansive map in dimension three}
\begin{theorem}\label{thm:7.1}
There exist a firmly nonexpansive map $u:\R^3\to\R^3$ and a closed Euclidean ball $X$ with nonempty interior such that normalised Lebesgue measure on $X$ satisfies $\CD(0,3)$, whereas almost every conditional measure in its leaf disintegration is supported on a one-dimensional leaf and fails $\CD(0,N)$ for every $N\in(1,\infty]$.

The map $u$ is the resolvent of an everywhere defined Lipschitz monotone map. Moreover, its leaf decomposition is also the fibre decomposition of an everywhere defined Lipschitz monotone map.
\end{theorem}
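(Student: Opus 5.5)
The plan is to build the example from an explicit twisted family of lines in $\R^3$. I will first realise this family as the fibre decomposition of an everywhere defined Lipschitz monotone map $T$. Then I will take $u=J_1=(\Id+T)^{-1}$, whose leaves are the translates $p+T^{-1}(p)$ by Lemma~\ref{lem:5.2}. Finally I will compute the conditional densities directly by the area formula.

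\textbf{The family of lines.} For $k>0$, parametrise points by $(b,s)\in\R^2\times\R$ via
\[
\Psi(b,s)=\bigl(b_1-ksb_2,\;b_2+ksb_1,\;s\bigr)=\bigl(M(s)b,\;s\bigr),\qquad M(s)=\begin{pmatrix}1&-ks\\ ks&1\end{pmatrix}.
\]
Since $\det M(s)=1+k^2s^2>0$, the lines $\{\Psi(b,s)\mid s\in\R\}$ are pairwise disjoint and fill $\R^3$. A direct computation gives $\det D\Psi(b,s)=1+k^2s^2$. Consequently Lebesgue measure, written in the coordinates $(b,s)$, is $(1+k^2s^2)\dd b\dd s$. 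Passing to unit speed along each line, $s=\sigma/\sqrt{1+k^2|b|^2}$, the conditional density of normalised Lebesgue measure on a ball $X$ becomes proportional to $\sigma^2+a^2$, where $a=a(b)>0$ is an explicit leaf parameter, restricted to the chord of the line inside $X$. By uniqueness of disintegration (and Theorem~\ref{thm:3.1}), this is the conditional. I will choose $X$ centred on $\{s=0\}$ with small radius relative to $1/k$, so that every chord lies in $|\sigma|<a$. There $\frac{d^2}{d\sigma^2}\log(\sigma^2+a^2)>0$, and \eqref{eq:7.1} excludes $\CD(0,N)$ for all $N\in(1,\infty]$.

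\textbf{The monotone map.} The fibre-label map is $b(x)=M(x_3)^{-1}(x_1,x_2)$. I will seek $T(x)=\Theta(b(x))$ for a map $\Theta$ on $\R^2$, possibly with a third component depending only on $b$, so that the fibres of $T$ are exactly the lines when $\Theta$ is injective. The condition to verify is that the symmetric part of $DT$ is positive semidefinite. The difficulty is the column $\partial_{x_3}b$: the $(3,3)$ entry of $DT$ vanishes while the corresponding off-diagonal entries do not. I will first try to compensate by choosing $\Theta$ with a twisted, skew-adjusted linear part, or by letting the third component be a function of $b$ that couples $x_3$, near the ball. If this fails, I would replace the $x_3$-independent part by a rotation adapted to the twist, so that $DT$ restricted to the normal plane is a positive multiple of an orthogonal projection composed with a rotation by a small angle. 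I expect this monotonicity verification, together with a global extension, to be the main obstacle. The extension should keep $T$ Lipschitz and monotone on all of $\R^3$ without altering the fibres meeting the relevant region. One option is to make the construction globally defined via a bounded twist $k\,\phi(s)$ with $\phi$ linear near $0$, and then check the monotonicity inequality globally.

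\textbf{Transfer to the resolvent.} Once $T$ is Lipschitz and monotone, Lemma~\ref{lem:5.1} makes it maximal. Minty's theorem then makes $u=J_1$ firmly nonexpansive, and Lemma~\ref{lem:5.2} identifies its leaves with the fibres of the Lipschitz monotone map $A_1=\Id-J_1$. These leaves are the translates $p+T^{-1}(p)$. To keep the computed density, I will instead directly arrange that the twisted lines are the leaves. Concretely, I will define $T$ so that its fibres are the pre-translated lines $\Psi(b,\cdot)-\Theta(b)$; this is again a twisted family with the same Jacobian structure. Alternatively, I will recompute $\det D\Psi$ for the translated family, which changes $a(b)$ but not the quadratic form $\sigma^2+a^2$ in the leaf coordinate. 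Normalised Lebesgue measure on a ball satisfies $\CD(0,3)$, and Theorem~\ref{thm:3.1} shows that almost every leaf meeting $X$ is one-dimensional. Together with the computation above, this proves the statement.
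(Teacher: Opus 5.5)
Your density computation is correct and is the same mechanism as in the paper. The lines $\Psi(b,\cdot)$ partition $\R^3$, $\det D\Psi=1+k^2s^2$, and in arc length $\sigma=s\sqrt{1+k^2|b|^2}$ the conditional density is proportional to $\sigma^2+a(b)^2$ with $a(b)=\sqrt{1+k^2|b|^2}/k$. Hence a ball of radius less than $1/k$ centred on $\{x_3=0\}$ lies in $|\sigma|<a$.

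The gap is the map itself, which is the real content of the theorem. You leave its construction open, and the route you propose cannot be completed for any choice of $\Theta$. Suppose $T$ is everywhere defined and monotone, and one of its fibres contains a full line $x+\R v$. Monotonicity between $(x+tv,T(x))$ and an arbitrary $(y,T(y))$ reads
\[
\ip{T(x)-T(y)}{x-y}+t\,\ip{T(x)-T(y)}{v}\ge0\quad\text{for all }t\in\R,
\]
so $\ip{T(y)}{v}=\ip{T(x)}{v}$ for every $y\in\R^3$. The directions $(-kb_2,kb_1,1)$ of your lines span $\R^3$, so $T$ would be constant. The same expansion of $\norm{u(x+tv)-u(y+t'w)}^2\le\norm{x+tv-y-t'w}^2$ in $(t,t')$ shows more: a $1$-Lipschitz map cannot have three full-line leaves with linearly independent directions, since it would be an isometry on their union. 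So passing to the translated family $\Psi(b,\cdot)-\Theta(b)$, or making the lines the leaves of $J_1$ directly, does not help. The bounded-twist variant with nonlinear $\phi$ turns the lines into non-convex curves. These cannot be fibres of a maximal monotone relation or leaves of a nonexpansive map.

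The missing idea is that the fibres must be half-lines that terminate. For a ray $x+[0,\infty)v$, monotonicity only requires $\ip{T(x)-T(y)}{v}\ge0$.

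The paper obtains such rays from an obstacle. It sets $T_a=(a\Id+B+N_C)^{-1}$, where $C$ is the closed unit ball, $N_C$ its normal cone and $Bp=e_3\times p$. This map is $1/a$-Lipschitz and monotone. Its nontrivial fibres are the rays $\{Bp+sp\mid s\ge a\}$ with $\norm p=1$, and for these $\ip{p-q}{p}=1-\ip pq\ge0$.

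Because the value $p$ is also the direction of its own fibre, the resolvent $u=(\Id+\tau T_a)^{-1}$ has leaves $\tau p+T_a^{-1}(p)=\{Bp+sp\mid s\ge a+\tau\}$. The translation only shifts the starting point of each ray, which also settles your transfer step. The Jacobian of $(p,s)\mapsto Bp+sp$ is $s^2+\ip{e_3}{p}^2$, giving the same density form as yours, and the ball is placed where $s^2<\ip{e_3}{p}^2$.

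Separately, Theorem~\ref{thm:3.1} does not tell you that the leaves meeting $X$ are one-dimensional. That must come from an explicit description of the leaves, for instance via Lemma~\ref{lem:5.2} and the fibre formula.
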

\begin{proof}
Let $C=\cl B(0,1)\subset\R^3$, let $b=e_3$, and define the skew-symmetric linear map $B$ by $Bp=b\times p=(-p_2,p_1,0)$ for $p\in C$. For $p\in C$, let
\begin{equation*}
   N_C(p)=\{v\in \R^3\mid \ip{v}{q-p}\le0\text{ for all }q\in C\}
\end{equation*}
be the outward normal cone to $C$. 
The relation $B+N_C$ is monotone because $N_C$ is monotone and $\langle Bh,h\rangle=0$ for every $h\in\mathbb{R}^3$. It is also maximally monotone. Indeed, suppose that $(p,z)\in \R^3\times \R^3$ is monotonically related to its graph, so that $\langle z-Bq-v,p-q\rangle\geq0$ for every $q\in C$ and $v\in N_C(q)$. Setting $w=z-Bp$, skew-symmetry gives $\langle w-v,p-q\rangle\geq0$. Let 
\begin{equation*}
    q=P_C(p+w)\text{ and }v=p+w-q,
\end{equation*}
where $P_C$ denotes the metric projection onto $C$. The definition of the metric projection ensures that $v\in N_C(q)$, and hence 
\begin{equation*}
    0\leq\langle w-v,p-q\rangle=-\norm{p-q}^2.
\end{equation*}
Thus $p=q$ and $w=v\in N_C(p)$, which yields $z\in Bp+N_C(p)$. Therefore $B+N_C$ admits no proper monotone extension and is maximally monotone.

For $a>0$, put
\begin{equation*}
M_a=a\Id+B+N_C,\quad T_a=M_a^{-1}.
\end{equation*}
Minty's theorem shows that $M_a$ is onto; see \cite{r4}. If $x\in M_a(p)$ and $y\in M_a(q)$, then
\begin{equation}\label{eq:7.2}
\ip{x-y}{p-q}\ge a\norm{p-q}^2.
\end{equation}
Thus $T_a$ is everywhere defined, single-valued, monotone and $1/a$-Lipschitz. Its nonempty fibres are for $p\in C$
\begin{equation}\label{eq:7.3}
T_a^{-1}(p)=\begin{cases}
\{Bp+ap\},&\norm p<1,\\
\{Bp+sp\mid s\ge a\},&\norm p=1.
\end{cases}
\end{equation}
Fix $\tau>0$, put $c=a+\tau$, and let
\begin{equation*}
u=(\Id+\tau T_a)^{-1}.
\end{equation*}
Since $T_a$ is continuous and monotone on all of $\R^3$, it is
maximal monotone \cite{r4}. Thus $u$ is firmly nonexpansive, and
Lemma~\ref{lem:5.2} identifies its leaves as
$\tau p+T_a^{-1}(p)$. By \eqref{eq:7.3}, the positive-dimensional
leaves are precisely
\begin{equation}\label{eq:7.5}
 \S_p=\{Bp+sp\mid s\ge c\},\text{ when }\norm{p}=1.
\end{equation}
On $\S_p$ one has $u(z)=z-\tau p$; the other leaves are singletons.
Equation~\eqref{eq:7.3}, shows that
these leaves are precisely the fibres of the everywhere
defined Lipschitz monotone map $T_a$.

Let $S^2\subset\R^3$ denote the two-dimensional unit sphere. To compute the conditional measures, consider the parametrisation
\begin{equation*}
F: S^2\times(c,\infty)\to\R^3,\quad F(p,s)=Bp+sp.
\end{equation*}
It is injective, since $T_c(F(p,s))=p$. Choose an oriented orthonormal basis $(v_1,v_2)$ of $T_p S^2$. Since the derivative of $F$ with respect to the second variable is $p$, its Jacobian relative to $\Hh^2|_{S^2}\otimes \lambda^1$ is
\begin{equation}\label{eq:7.6}
J_F(p,s)=\det(Bv_1+sv_1,Bv_2+sv_2,p)=\det\begin{pmatrix}s&-\ip bp\\\ip bp&s\end{pmatrix}
=s^2+\ip bp^2.
\end{equation}
Thus $F$ is a smooth diffeomorphism onto an open subset of $\R^3$.

Take $a=\tau=1/8$, so $c=1/4$, and put $x_*=F(b,1/2)=b/2$. The open set
\begin{equation*}
\mathcal V=\{(p,s)\in S^2\times (c,\infty)\mid \ s^2<\ip bp^2\}
\end{equation*}
contains $(b,1/2)$. Choose a closed ball $X$ of positive radius centred at $x_*$ and contained in $F(\mathcal V)$, and put $\mu=\lambda(X)^{-1}\lambda|_X$. This measure satisfies $\CD(0,3)$.

For $p\in S^2$, let $I_p=\{s\in (c,\infty)\mid F(p,s)\in X\}$. This is a compact interval or the empty set. Change of variables gives the quotient density
\begin{equation*}
\frac{1}{\lambda(X)}\int_{I_p}\bigl(s^2+\ip bp^2\bigr)\dd \lambda^1(s)
\end{equation*}
with respect to $\Hh^2|_{ S^2}$. Thus the degenerate intervals have zero quotient measure. Since $s\mapsto F(p,s)$ has unit speed, the normalised conditional on every nondegenerate interval has density
\begin{equation}\label{eq:7.7}
h_p(s)=\frac{s^2+\ip bp^2}{\displaystyle\int_{I_p}(r^2+\ip bp^2)\dd \lambda^1(r)}.
\end{equation}
The choice of $X\subset F(\mathcal V)$ ensures that, at every interior point,
\begin{equation}\label{eq:7.8}
(\log h_p)''(s)=\frac{2(\ip bp^2-s^2)}{(s^2+\ip bp^2)^2}>0.
\end{equation}
It follows from \eqref{eq:7.1} that each of these conditionals fails $\CD(0,N)$ for every $N\in(1,\infty]$.
\end{proof}

\begin{corollary}[Failure of curvature-dimension condition inheritance on monotone fibres]
\label{cor:monotone-cd-failure}
There exist an everywhere-defined firmly nonexpansive map
$T:\R^3\to\R^3$ and a closed Euclidean ball $X$ with nonempty
interior such that $\mu=\lambda(X)^{-1}\lambda|_X$ satisfies
$\CD(0,3)$, whereas, in its disintegration
$\mu=\int\mu_p\dd\eta(p)$ with respect to $T$, almost every
conditional has support $T^{-1}(p)\cap X$ of dimension one and
fails $\CD(0,N)$ for every $N\in(1,\infty]$.
In particular, curvature-dimension bounds are not inherited by
fibres of general maximal monotone operators.
\end{corollary}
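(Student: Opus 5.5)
The plan is to reuse the construction of Theorem~\ref{thm:7.1} and to take for $T$ the firmly nonexpansive resolvent $u=(\Id+\tau T_a)^{-1}$ itself, with $a=\tau=1/8$, $c=1/4$ and the same ball $X\subset F(\mathcal V)$. The task is then to identify the disintegration of $\mu$ with respect to the map $T=u$, viewed as a labelling map, with the leaf disintegration already computed.

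\textbf{Step 1: the fibres of $T$ are the leaves.} By \eqref{eq:7.5}, $u(z)=z-\tau p$ on $\S_p=\{Bp+sp\mid s\ge c\}$. For $z=Bp+sp$ this gives
\[
u(z)=Bp+(s-\tau)p,
\]
so $u$ is injective on $\S_p$ and does \emph{not} have $\S_p$ as a fibre. Taking $T=u$ directly therefore fails. Instead I take $T=u\circ$ something, or better the monotone map $T_c=(c\Id+B+N_C)^{-1}$. This map is firmly nonexpansive after scaling: by \eqref{eq:7.2}, $T_c$ is $c$-cocoercive, so $cT_c$ is firmly nonexpansive, being the resolvent of the maximal monotone relation $(c\Id+B+N_C)(\cdot/c)-\Id$. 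Rather than verify this abstractly, I check directly from \eqref{eq:7.2} that
\[
\ip{cT_cx-cT_cy}{x-y}\ge\norm{cT_cx-cT_cy}^2 .
\]
So $T:=cT_c$ is everywhere defined and firmly nonexpansive. Its fibres are those of $T_c$, and by \eqref{eq:7.3} with $a$ replaced by $c$ these are $T^{-1}(cp)=\{Bp+sp\mid s\ge c\}=\S_p$ for $\norm p=1$, and singletons otherwise. This is consistent with the injectivity argument for $F$, where $T_c(F(p,s))=p$.

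\textbf{Step 2: identify the disintegration.} On $F(\mathcal V)\supset X$ the map $T$ equals $cp$ at $F(p,s)$, so it is a Borel labelling of exactly the same partition of $X$ into chords $\S_p\cap X$ that was used in the proof of Theorem~\ref{thm:7.1}. By uniqueness of disintegration (only the partition matters, up to relabelling by the injective map $p\mapsto cp$), the conditionals $\mu_{cp}$ coincide with the conditionals computed there. Concretely, the change of variables through $F$ with Jacobian \eqref{eq:7.6}, combined with Lemma~\ref{lem:2.2}, gives the densities $h_p$ of \eqref{eq:7.7} on $I_p$ for almost every label. Nondegenerate intervals carry full quotient measure, so almost every conditional has support $\S_p\cap X=T^{-1}(cp)\cap X$, which is one-dimensional.

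\textbf{Step 3: failure of the curvature-dimension condition.} This is immediate from \eqref{eq:7.8} and the necessary condition \eqref{eq:7.1}. The ambient measure $\mu$, normalised Lebesgue measure on a ball, satisfies $\CD(0,3)$.

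The only delicate point is Step~1: choosing a map whose \emph{fibres}, rather than isometric leaves, are the chords, while keeping it firmly nonexpansive. The cocoercivity estimate \eqref{eq:7.2} resolves this after rescaling by $c$. The final sentence of the corollary then follows because $T$ is the resolvent of a maximal monotone relation (Minty), equivalently $T^{-1}-\Id$ is maximal monotone with the same inverse fibres.
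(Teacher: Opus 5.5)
Your proposal is correct and is essentially the paper's argument. Both proofs realise the chords as fibres of a rescaled member of the family $T_a=(a\Id+B+N_C)^{-1}$, read off firm nonexpansiveness from \eqref{eq:7.2}, and identify the conditionals with \eqref{eq:7.7}.

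The only difference is the scaling. The paper takes $T=aT_a$ with $a=1/8$. Its fibres $\{Bp+sp\mid s\ge a\}$ are longer than $\S_p$, but they meet $X$ only in $\S_p\cap X$ because $s>c>a$ on $\mathcal V$. Your $T=cT_c$ has fibres exactly $\S_p$, which is a harmless and slightly cleaner variant. You should delete the abandoned attempt with $T=u$.

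One claim in your last sentence is false, although the conclusion survives. You say $T^{-1}-\Id$ has the same inverse fibres as $T$; it does not. Its inverse value at $v$ is $\{y\mid T(y+v)=y\}$. By Lemma~\ref{lem:5.2} this is a translate of an isometric leaf of $T$, not a fibre of $T$.

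The correct reason is simpler, and you already have the ingredients. Firm nonexpansiveness gives $\ip{Tx-Ty}{x-y}\ge0$, so $T$ is an everywhere defined, continuous, monotone map. It is therefore itself maximal monotone. The chords are precisely its inverse values $T^{-1}(cp)$, which is the setting of Theorem~\ref{thm:5.5}.
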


\begin{proof}
With the notation of the preceding proof, set $T=aT_a$.
Inequality~\eqref{eq:7.2} shows that $T$ is firmly nonexpansive;
being everywhere defined and continuous, it is also maximally
monotone. Since $X\subset F(\mathcal V)$ and $s>c>a$ on
$\mathcal V$, equation~\eqref{eq:7.3} gives
\begin{equation*}
T^{-1}(ap)\cap X=\S_p\cap X
\text{ for every }p\in S^2.
\end{equation*}
The conditional measures therefore coincide, up to relabelling,
with those computed in the preceding proof, which establishes
the conclusion.
\end{proof}

\begin{remark}\label{rem:7.2}
The map $u$ constructed above is not a gradient: on the open
ellipsoid $(B+c\Id)(\ri C)$, its derivative is the nonsymmetric
matrix $\Id-\tau(B+c\Id)^{-1}$. The skew term also produces the
curvature failure in this example. Indeed, if $B=0$, the
conditional density in \eqref{eq:7.7} is proportional to $s^2$,
so its square root is affine and the conditionals satisfy
$\CD(0,3)$.
\end{remark}

\subsection{A gradient contraction in dimension four}
\begin{theorem}\label{thm:7.5}
There exist a function $\varphi\in\mathcal C^{1,1}(\R^4)$ with
$1$-Lipschitz gradient $u=D\varphi$ and a closed Euclidean ball $X$
with nonempty interior such that normalised Lebesgue measure on $X$
satisfies $\CD(0,4)$, whereas almost every conditional measure in its
leaf disintegration is supported on a two-dimensional leaf and fails
$\CD(0,N)$ for every $N\in(2,\infty]$.

The map $u$ is smooth near $X$. At every point of each leaf section
in $X$, the restriction of $D^2\varphi$ to the tangent plane has
eigenvalues $+1$ and $-1$.
\end{theorem}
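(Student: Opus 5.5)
The plan is to imitate the structure of the proof of Theorem~\ref{thm:7.1}: write down an explicit smooth family of two-dimensional affine leaves, read off the geometric density as a Jacobian, and show that its logarithm has a positive second derivative in some tangent direction. Following the mechanism described in the introduction, I would work on $\R^4=\R^2\times\R^2$ with coordinates $(\xi,\eta)$. I would take a potential of the form
\begin{equation*}
\varphi(\xi,\eta)=\tfrac12\ip{\xi}{\xi}+\tfrac12\ip{\eta}{R(\xi)\eta},
\end{equation*}
or a close variant. Here $R(\xi)$ is a reflection of $\R^2$ depending smoothly on $\xi$ through a slowly varying angle, for instance $R=\begin{pmatrix}\cos\theta&\sin\theta\\ \sin\theta&-\cos\theta\end{pmatrix}$ with $\theta=\theta(\xi)$ affine in one coordinate of $\xi$ and of small slope. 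The intended leaves are two-dimensional affine planes $\S$ spanned by a unit vector $e$, along which $D\varphi$ acts as a translate of the identity (eigenvalue $+1$), and a unit vector $f\perp e$, along which $D\varphi$ acts as the reflection (eigenvalue $-1$). On such a plane $u(x)-u(y)=T_{\S}(x-y)$ with $T_{\S}e=e$ and $T_{\S}f=-f$, which is a linear isometry, so $u$ is isometric there.

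\textbf{Step 1: leaf parametrisation.} I would parametrise a candidate leaf family by
\begin{equation*}
F(p,s,t)=b(p)+s\,e(p)+t\,f(p),\qquad p\in P\subset\R^2,\ (s,t)\in\R^2.
\end{equation*}
The base point $b$ and the frame $e,f$ are chosen so that $D\varphi(F(p,s,t))=D\varphi(b(p))+s\,e(p)-t\,f(p)$. This identity is verified by direct differentiation. Maximality of these isometry sets then follows from the equality-defect criterion of Section~\ref{sec:geometry}: near $X$, $\norm{D^2\varphi}\le1$ and the eigenvalue $\pm1$ directions are exactly the span of $e,f$, so $c_u(x,y)=0$ forces $y-x\in\operatorname{span}\{e,f\}$ locally, and hence $y\in\S$. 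Leaf dimension two with codimension two then matches the threshold of Corollary~\ref{cor:6.3}.

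\textbf{Step 2: geometric density and failure of $\CD$.} As in \eqref{eq:7.6}, I would compute the Jacobian $J_F(p,s,t)=\det(\partial_{p_1}F,\partial_{p_2}F,e,f)$. It is a polynomial of degree at most two in $(s,t)$, because $\partial_pF$ is affine in $(s,t)$. Its coefficients involve $\partial_pe$ and $\partial_pf$, so the rotation of $R(\xi)$ with $\xi$ is precisely what makes it nonconstant. By the change-of-variables argument of Theorem~\ref{thm:7.1}, the conditional density of normalised Lebesgue measure on a small ball $X$ is proportional to $J_F(p,\cdot,\cdot)$. The aim is to arrange $J_F=\alpha+\beta st+\dots$, so that $\log J_F$ is separately concave in $s$ and in $t$ but not jointly concave. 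I would then exhibit a direction $v=(1,1)$ or $(1,-1)$ with $D^2\log J_F(v,v)>0$ at a chosen base point $x_*$. Next I would take $X$ to be a small ball around $x_*$ inside the open set where this positivity persists. The quotient is absolutely continuous, and \eqref{eq:7.1} excludes $\CD(0,N)$ for all $N\in(2,\infty]$, while Lebesgue measure on a ball satisfies $\CD(0,4)$.

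\textbf{Step 3: global $\mathcal C^{1,1}$ extension.} The local potential need not have a globally $1$-Lipschitz gradient. I would multiply the non-quadratic part by a cutoff supported near $X$ and rescale so that $\norm{D^2\varphi}\le1$ everywhere. Since the leaf geometry only matters inside $X$, this suffices, provided the leaves meeting $X$ remain maximal isometry sets for the modified map. The eigenvalue statement follows from Step 1.

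I expect the main obstacle to be balancing these two requirements. The leaves must rotate fast enough, with $\partial_pe,\partial_pf$ nonzero, to produce a mixed term in $J_F$ that breaks joint log-concavity. At the same time the Hessian of $\varphi$ must stay in $[-\Id,\Id]$ with eigenvalues exactly $\pm1$ on the leaf planes. My first attempt would be the reflection-angle ansatz above with a small slope parameter $\delta$, expanding $J_F$ to first order in $\delta$ to check the sign of the mixed derivative, and then shrinking $X$.
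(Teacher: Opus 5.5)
Your overall architecture matches the paper: an explicit affine leaf family, the conditional density read off as a Jacobian, a diagonal direction with positive second derivative of the logarithm, and maximality via the $\pm1$-eigenspace kernel. There is, however, a genuine gap. You never produce an admissible potential, and the ansatz you commit to fails for a structural reason, not a quantitative one.

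If $H$ is symmetric with $\norm{H}\le1$ and $\ip{Hv}{v}=\pm\norm{v}^2$, then $(\Id\mp H)v=0$, because $\Id\mp H\succeq0$. For $\varphi(\xi,\eta)=\tfrac12\ip{\xi}{\xi}+\tfrac12\ip{\eta}{R(\xi)\eta}$, the $\eta\eta$-block of $D^2\varphi$ is exactly the reflection $R(\xi)$. So the vectors $(0,e_\pm)$ with $R(\xi)e_\pm=\pm e_\pm$ would have to be eigenvectors of $D^2\varphi$. Comparing $\xi$-components gives $\ip{(\partial_{\xi_i}R)\eta}{e_\pm}=0$, i.e.\ $(\partial_\xi R)\eta=0$ on an open set, so $R$ is locally constant.

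Hence for any nonzero slope $\delta$, however small, $D\varphi$ fails to be $1$-Lipschitz near every point with $\eta\neq0$, and no expansion in $\delta$ repairs this. With $R$ constant, $D\varphi(\xi,\eta)=(\xi,R\eta)$ is a linear isometry and $\R^4$ is a single leaf. Step~3 cannot rescue this either. Rescaling $\varphi$ by $c<1$ turns the tangential eigenvalues $\pm1$ into $\pm c$, so the intended planes stop being isometry sets. A cutoff gives no control of $\norm{D^2\varphi}\le1$ in its transition region. You name this balance as the main obstacle, but it is the whole content of the theorem and remains unresolved.

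The missing idea is a mechanism that makes $D\varphi$ a \emph{fixed} reflection plus a leaf-dependent constant on a rotating family of planes. The paper takes $\varphi_0=Q+\min_p\max_q\Psi_\xi(p,q)$ with $D^2Q=\mathrm{diag}(1,1,-1,-1)$ and $\Psi_\xi(p,q)=\tfrac r2p^2-\tfrac s2q^2+2pq-zp+wq$. This is affine in $\xi=(r,z,s,w)$ with coefficient vector $(p^2/2,-p,-q^2/2,q)$. By the envelope principle, $D(\varphi_0-Q)$ equals this vector at the unique saddle point. It is therefore constant on each plane $\{z=rp+2q,\ w=sq-2p\}$, which is spanned by the $+1$-eigenvector $v_+=(1,p,0,0)$ and the $-1$-eigenvector $v_-=(0,0,1,q)$ of $D^2Q$. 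The planes rotate through the choice of eigenvectors of a fixed reflection, not by rotating the reflection, so the coupling obstruction above never arises.

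The rest of the paper's argument then goes through:
\begin{enumerate}
\item The transverse eigenvalues $\pm2/\sqrt5$ at $\xi_*=(1,0,1,0)$ give $\norm{D^2\varphi_0}\le1$ and $\ker(\Id-H^2)=\operatorname{span}\{v_+,v_-\}$ on a ball.
\item The ambient Jacobian $rs+4$ has exactly the $\alpha+\beta st$ shape you were aiming for.
\item The global extension comes from Le Gruyer's theorem, once the two-point inequality $\bigl|\varphi_0(x)-\varphi_0(y)-\tfrac12\ip{u_0(x)+u_0(y)}{x-y}\bigr|\le\tfrac14\bigl(\norm{x-y}^2-\norm{u_0(x)-u_0(y)}^2\bigr)$ is checked. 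This uses Taylor bounds at the points $z_\pm$, which stay in the tripled ball where the Hessian bound holds.
\end{enumerate}
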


\begin{proof}
Put for $(r,z,s,w)\in\R^4$
\begin{equation*}
 Q(r,z,s,w)=\frac12(r^2+z^2-s^2-w^2).
\end{equation*}
Its gradient is the reflection $(r,z,s,w)\mapsto(r,z,-s,-w)$.
For $\xi=(r,z,s,w)\in\R^4$ with $r,s>0$, introduce the quadratic saddle function 
\begin{equation*}
 \Psi_\xi(p,q)=\frac r2p^2-\frac s2q^2+2pq-zp+wq,
 \text{ and }
 \varphi_0(\xi)=Q(\xi)+\min\{\max\{\Psi_\xi(p,q)\mid q\in\R\}\mid p\in\R\}
\end{equation*}
The saddle point is unique and it satisfies
\begin{equation*}
 z=rp+2q,\qquad w=sq-2p.
\end{equation*}
Let $\Omega=\{(r,z,s,w)\in\R^4\mid r,s>0\}$. The map $F\colon \R^2\times (0,\infty)^2\to \Omega$ defined by
\begin{equation*}
 F(p,q,r,s)=(r,rp+2q,s,sq-2p)\text{ for }(p,q,r,s)\in \R^2\times (0,\infty)^2
\end{equation*}
gives the coordinates of these stationary points and is a diffeomorphism,
since $rs+4>0$, and thus these equations determine $(p,q)$ smoothly and
uniquely from $\xi$. The parameters $(p,q)$ label affine plane sections, while
$(r,s)$ are coordinates within each section. Below we identify
these sections locally with the leaves of the global extension.

For $(p(\xi),q(\xi))$ denoting the saddle point, 
\begin{equation*}
 u_0(\xi)=D\varphi_0(\xi)
 =(r,z,-s,-w)+\left(\frac{p(\xi)^2}{2},-p(\xi),-\frac{q(\xi)^2}{2},q(\xi)\right).
\end{equation*}
For fixed $(p,q)$, the correction to the reflection $DQ$ is constant.
Thus, on the affine plane section parametrised by
\begin{equation*}
 (0,\infty)^2\ni(r,s)\longmapsto F(p,q,r,s)\in\Omega,
\end{equation*}
the map $u_0$ is the restriction of an affine isometry of $\R^4$.
Explicitly,
\begin{equation*}
 u_0(F(p,q,r,s))
 =r(1,p,0,0)-s(0,0,1,q)
 +\left(\frac{p^2}{2},2q-p,-\frac{q^2}{2},2p+q\right).
\end{equation*}
The orthogonal tangent vectors of that plane section are
\begin{equation*}
 v_+=(1,p,0,0),\qquad v_-=(0,0,1,q),
\end{equation*}
and the above gives
\begin{equation*}
 D^2\varphi_0(F)v_+=v_+,
 \qquad D^2\varphi_0(F)v_-=-v_-.
\end{equation*}

Put $\xi_*=(1,0,1,0)=F(0,0,1,1)$ and
$H(\xi)=D^2\varphi_0(\xi)$. At $\xi_*$, the two eigenvalues corresponding to eigenvectors of $H$
orthogonal to $v_+,v_-$ are $\pm2/\sqrt5$, strictly between
$-1$ and $1$. The matrix $H$ is symmetric, and its tangent
eigenvalues remain exactly $+1,-1$ by the preceding identities.
Continuity therefore gives $\rho>0$ such that
$B(\xi_*,3\rho)\subset\Omega$ and, throughout this ball,
\begin{equation*}
 \norm{H}\le1,\qquad
 \ker(\Id-H^2)=\operatorname{span}\{v_+,v_-\},
 \qquad r^2+s^2<8.
\end{equation*}
In particular, $u_0$ is $1$-Lipschitz on this ball.

We extend $\varphi_0$ from $E=\cl B(\xi_*,\rho)$.
For $x,y\in E$, the points
\begin{equation*}
 z_\pm=\frac{x+y}{2}\pm\frac{u_0(x)-u_0(y)}2
\end{equation*}
lie in $B(\xi_*,3\rho)$. 
Set
\begin{equation*}
 R=\varphi_0(x)-\varphi_0(y)
   -\frac12\ip{u_0(x)+u_0(y)}{x-y}.
\end{equation*}
The bound $\norm{D^2\varphi_0}\le1$ on the larger
ball gives the quadratic Taylor estimates
\begin{equation*}
 \varphi_0(x)+\ip{u_0(x)}{z_+-x}
       -\frac12\norm{z_+-x}^2
 \le \varphi_0(z_+)
 \le \varphi_0(y)+\ip{u_0(y)}{z_+-y}
       +\frac12\norm{z_+-y}^2.
\end{equation*}
Comparison of the two outer expressions yields
\begin{equation*}
 R\le\frac14\bigl(\norm{x-y}^2-\norm{u_0(x)-u_0(y)}^2\bigr).
\end{equation*}
Interchanging $x$ and $y$ replaces $R$ by $-R$ and $z_+$ by
$z_-$, so the same argument gives the corresponding lower bound.
Consequently, for all $x,y\in E$
\begin{equation*}
 \left|\varphi_0(x)-\varphi_0(y)
   -\frac12\ip{u_0(x)+u_0(y)}{x-y}\right|
 \le\frac14\left(
   \norm{x-y}^2-\norm{u_0(x)-u_0(y)}^2\right).
\end{equation*}
Le Gruyer's  extension theorem \cite{legruyer2009} thus
gives $\varphi\in\mathcal C^{1,1}(\R^4)$ agreeing with
$\varphi_0$ and its gradient on $E$, with
$\operatorname{Lip}(D\varphi)\le1$.
Set $u=D\varphi$ and $X=\cl B(\xi_*,\rho/2)$.
Then $u$ is smooth near $X$, and normalised Lebesgue measure
$\mu=\lambda(X)^{-1}\lambda|_X$ satisfies $\CD(0,4)$.

For each label $(p,q)$ whose plane meets $X$, put
\begin{equation*}
 P_{p,q}=(0,2q,0,-2p)+\operatorname{span}\{v_+,v_-\}.
\end{equation*}
The map $u$ is an affine isometry on $P_{p,q}\cap E$.
By the leaf geometry recalled in Section~\ref{sec:geometry},
this set is contained in a unique leaf $\S_{p,q}$.
If $\xi\in P_{p,q}\cap X$ and $\eta\in\S_{p,q}$,
differentiating the affine isometry along $[\xi,\eta]$ gives
\begin{equation*}
 \norm{H(\xi)(\eta-\xi)}=\norm{\eta-\xi}.
\end{equation*}
The kernel identity above forces $\eta-\xi$ to lie in
$\operatorname{span}\{v_+,v_-\}$. Thus $\S_{p,q}\subset P_{p,q}$,
and
\begin{equation*}
 \dim\S_{p,q}=2,\qquad
 \S_{p,q}\cap X=P_{p,q}\cap X.
\end{equation*}
Distinct labels give distinct leaves, so these are precisely
the leaf sections in $X$.

The tangential Jacobian 
\begin{equation*}
    \big(|\det\big((v_+,v_-)^*(v_+,v_-)\big)|\big)^\frac12=\big((1+p^2)(1+q^2)\big)^\frac12
\end{equation*}
is constant on each plane. The ambient Jacobian $|\det DF(p,q,r,s)|=rs+4$,
change of variables, and Fubini's theorem therefore give
the conditional densities
\begin{equation*}
 h_{p,q}(F(p,q,r,s))=c_{p,q}(rs+4),
 \quad c_{p,q}>0,
\end{equation*}
with respect to $\Hh^2|_{\S_{p,q}\cap X}$, for almost every
leaf. Here $c_{p,q}$ is a constant that normalises the mass. Sections of zero
area have zero quotient measure, and every remaining section
is a closed planar disk with full conditional support.

In the unit tangent direction
\begin{equation*}
 e=\frac{v_++v_-}{\sqrt{2+p^2+q^2}},
\end{equation*}
we obtain
\begin{equation}\label{eq:rational-log-curvature}
 D^2\log h_{p,q}(e,e)
 =\frac{8-r^2-s^2}{(2+p^2+q^2)(rs+4)^2}>0.
\end{equation}
The curvature criterion \eqref{eq:7.1} excludes
$\CD(0,N)$ for every $N\in(2,\infty]$.
The exact tangent eigenvalue identities prove the remaining
assertion.
\end{proof}

\subsection{Arbitrarily large curvature loss}
Rescaling the preceding examples shows that the failure of curvature-dimension condition inheritance cannot be repaired by allowing a fixed loss in the lower curvature bound.

\begin{corollary}\label{cor:7.7}
Let $A>0$ and $\epsilon>0$. There exist:
\begin{enumerate}
\item a firmly nonexpansive map on $\R^3$ and a closed ball of diameter less than $\epsilon$ such that almost every conditional measure in the leaf disintegration of normalised Lebesgue measure on the ball is supported on a one-dimensional leaf and fails $\CD(-A,N)$ for every $N\in(1,\infty]$;
\item a gradient contraction $D\varphi$ on $\R^4$, with $\varphi\in \mathcal C^{1,1}(\R^4)$, and a closed ball of diameter less than $\epsilon$ such that almost every conditional measure in the leaf disintegration of normalised Lebesgue measure on the ball is supported on a two-dimensional leaf and fails $\CD(-A,N)$ for every $N\in(2,\infty]$.
\end{enumerate}
Alternatively, the ambient measure may be fixed in advance to be either normalised Lebesgue measure on the unit ball or the standard Gaussian in the corresponding dimension. In that case the asserted failure holds on a family of leaves of positive quotient measure.
\end{corollary}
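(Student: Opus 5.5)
The plan is to rescale the two examples of Theorems~\ref{thm:7.1} and~\ref{thm:7.5}. For the first assertions we use homotheties of the whole configuration. For the fixed-ambient versions we use translated copies of small rescaled pieces placed inside the unit ball or in a region of positive Gaussian mass.

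\textbf{Scaling.} For $\delta>0$ and a $1$-Lipschitz map $u$, put $u_\delta(x)=\delta u(x/\delta)$. This map is again $1$-Lipschitz, firmly nonexpansive when $u$ is, and equal to $D\varphi_\delta$ with $\varphi_\delta(x)=\delta^2\varphi(x/\delta)$ in the gradient case, so $\varphi_\delta\in\mathcal C^{1,1}$. Its leaves are $\delta\S$, and normalised Lebesgue measure on $\delta X$ has conditionals $(x\mapsto\delta x)_\#\mu_{\S}$. A conditional density $h$ on $\S\cap X$ becomes $h_\delta(y)\propto h(y/\delta)$, hence
\[
D^2\log h_\delta(y)(e,e)=\delta^{-2}D^2\log h(y/\delta)(e,e).
\]
In the first example \eqref{eq:7.8} gives, for all leaves meeting $X$, a uniform lower bound $(\log h_p)''\ge c_0>0$ on the compact interior piece, by continuity on the compact ball $X\subset F(\mathcal V)$. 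In the second example \eqref{eq:rational-log-curvature} gives a uniform lower bound $c_0>0$ on $X$, since $r^2+s^2<8$ there and all quantities are continuous on compact sets. After scaling, $D^2\log h_\delta(e,e)\ge c_0\delta^{-2}$.

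By \eqref{eq:7.1}, $\CD(-A,N)$ would require $-D^2\log h-\frac{D\log h\,(D\log h)^*}{N-k}\ge -A\Id$. The second term is nonpositive, so $-D^2\log h_\delta(e,e)\ge -A$ would be needed. This fails once $c_0\delta^{-2}>A$, for every $N\in(k,\infty]$. Choosing $\delta$ small also makes $\operatorname{diam}(\delta X)<\epsilon$, which proves (i) and (ii).

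\textbf{Fixed ambient measure.} Let $\mu$ be normalised Lebesgue measure on the unit ball, or the standard Gaussian. Choose $\delta$ as above and translate: replace $u_\delta$ by $x\mapsto u_\delta(x-x_0)+x_0$, which is still firmly nonexpansive, or a gradient, with the same leaf geometry. Pick $x_0$ so that $\delta X+x_0$ lies in the open unit ball (for the Gaussian, any $x_0$ works). Let $Y=\delta X+x_0$.

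The key step is to compare the conditionals of $\mu$ with those computed for Lebesgue measure on $Y$. Lebesgue measure here is the restriction to a ball contained in the open region where the explicit parametrisation $F$ is a diffeomorphism. So disintegrate $\mu$ restricted to the whole open set $F(\mathcal V)$ (suitably scaled), or to a neighbourhood $U\supset Y$ on which the leaf sections are those of the explicit chart. There Theorem~\ref{thm:3.1} gives $d\mu_{\S}\propto h_\mu g_{\S}\dd\Hh^k$, with $g_{\S}$ equal, up to leaf constants, to the explicit Jacobian factor ($s^2+\ip bp^2$, respectively $rs+4$, rescaled).

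On each leaf, restricting the conditional to $\S\cap Y$ gives a density proportional to $h_\mu g_{\S}$. The curvature test \eqref{eq:7.1} is local, so it is enough to show failure at one interior point of $\S\cap Y$, since $\CD$ on the whole leaf restricts locally. The ambient factor has bounded $D^2\log h_\mu$: it is $0$ for the ball and $-\Id$ for the Gaussian. Hence the log-Hessian along $e$ is at least $c_0\delta^{-2}-1>A$ for small $\delta$, which gives failure.

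The family of leaves meeting $Y$ has quotient mass at least $\mu(Y)>0$, so the failure holds on a family of positive quotient measure.

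\textbf{Main obstacle.} The delicate point is identifying the leaves of the globally extended map near $Y$ with the explicit plane or ray sections for the whole ambient measure, not only for Lebesgue measure on $Y$. For this I reuse the arguments already given in the proofs of Theorems~\ref{thm:7.1} and~\ref{thm:7.5}: the explicit formula \eqref{eq:7.5}, and the kernel identity forcing $\S\subset P_{p,q}$. Both are pointwise statements valid on the whole neighbourhood, so they transfer to any absolutely continuous ambient measure via Lemma~\ref{lem:2.2}.
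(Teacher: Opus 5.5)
Your proposal is correct and follows essentially the paper's proof. You rescale the two examples so that the directional log-Hessian of the explicit geometric density is bounded below by a constant times $\delta^{-2}$, invoke the local necessary condition \eqref{eq:7.1}, and, for a fixed ambient measure, identify the conditionals on the rescaled chart by restriction and uniqueness of disintegration while absorbing the ambient factor through $D^2\rho\le\Id$. Scaling about the origin and then translating, rather than scaling about $x_*$, and taking the uniform bound on the whole compact ball rather than on a subchart, are immaterial differences. One small point to tidy: take the positive-measure family to be the leaves with $\mu_{\S}(\ri Y)>0$, rather than all leaves meeting $Y$, since your argument needs an interior point of $\S\cap Y$. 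This family has quotient measure at least $\mu(Y)$, because $\mu(Y)=\int\mu_{\S}(\ri Y)\dd\nu$.
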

\begin{proof}
In either construction, let $F(a,z)$ denote the local chart, with $a$ labelling the leaves and $z$ the affine coordinates along each leaf. Its geometric density is
\begin{equation*}
    j(a,z)=\frac{|\det DF(a,z)|}{J_k(D_zF(a,z))}.
\end{equation*}
Here $(n,k)=(3,1)$ or $(4,2)$, and the tangential Jacobian is constant along each leaf. By \eqref{eq:7.8} and \eqref{eq:rational-log-curvature}, there are an interior point $x_*$, an open subchart, a continuous unit tangent field $v$, and $c_*>0$ such that
\begin{equation}\label{eq:7.24}
D^2_{\S}\log j(v,v)\ge c_*
\end{equation}
throughout that subchart. In the first example, the value at $(p,s)=(e_3,1/2)$
is $24/25$; for the second, the value at $(r,z,s,w)=(1,0,1,0)$ is $3/25$.

For $\delta>0$, define
\begin{equation*}
    u_\delta(x)=\delta\bigl(u(x_*+x/\delta)-u(x_*)\bigr).
\end{equation*}
Its leaves are exactly the sets $\delta(\S-x_*)$, where $\S$ runs through the leaves of $u$. Both the $1$-Lipschitz property and firm nonexpansiveness are preserved by this rescaling. In the gradient case,
\begin{equation*}
  u_\delta=D\varphi_\delta,\qquad
\varphi_\delta(x)=\delta^2\varphi(x_*+x/\delta)-\delta\ip{D\varphi(x_*)}{x}.  
\end{equation*}
The scaled chart is $F_\delta=\delta(F-x_*)$, with geometric density $\delta^{n-k}j$. Hence the corresponding unit-direction second derivative of its logarithm is at least $c_*\delta^{-2}$.

Choose a closed ball $X_0$ centred at $x_*$ and contained in the image of the subchart. For normalised Lebesgue measure on $X_\delta=\delta(X_0-x_*)$, almost every conditional measure has density proportional to $\delta^{n-k}j$. Take $\delta$ so small that $\operatorname{diam}X_\delta<\epsilon$ and $c_*\delta^{-2}>A$. Equation~\eqref{eq:7.1} then excludes every curvature-dimension bound in the statement.

For the last assertion, let the fixed ambient density be proportional to $e^{-\rho}$, with $\rho=0$ on the unit ball or $\rho(x)=\norm x^2/2$ for $x\in\R^n$. On the image of the scaled subchart, the conditional measure on a leaf of $u_\delta$ has density
\begin{equation}\label{eq:7.25}
h_a(F_\delta(a,z))=C(a)e^{-\rho(F_\delta(a,z))}\delta^{n-k}j(a,z),\quad C(a)>0.
\end{equation}
To verify this formula for the disintegration of the full ambient measure, restrict that measure to the image of the subchart and apply change of variables. Uniqueness of disintegration identifies each restricted conditional with the restriction of its global conditional, up to a positive constant depending only on its leaf.

Since $D^2\rho\le\Id$, equations~\eqref{eq:7.24} and~\eqref{eq:7.25} give, in the indicated unit direction,
\begin{equation*}
    D^2_{\S}\log h_a(v,v)\ge c_*\delta^{-2}-1.
\end{equation*}
Choose $\delta$ so that this exceeds $A$ and so that the image of the subchart lies inside the unit ball when required. This image has positive ambient measure. Therefore the leaves on which the displayed local density formula and curvature-dimension condition obstruction hold have positive quotient measure. 
\end{proof}

\begin{remark}\label{rem:7.8}
The ambient unit ball satisfies $\CD(0,n)$, and the standard Gaussian satisfies $\CD(1,\infty)$. Thus no curvature lower bound depending only on the ambient curvature and dimension can hold for the indicated leaf conditionals. The map in Corollary~\ref{cor:7.7} may depend on $A$; the statement does not assert unbounded curvature loss for a single map.
\end{remark}

\section*{Table of notation}

Conditional measures have mass one; references point to the detailed definitions.

\begingroup
\footnotesize
\renewcommand{\arraystretch}{0.96}
\setlength{\LTleft}{0pt}
\setlength{\LTright}{0pt}
\begin{longtable}{@{}p{0.13\textwidth}@{\hspace{0.01\textwidth}}p{0.345\textwidth}@{\hspace{0.03\textwidth}}p{0.13\textwidth}@{\hspace{0.01\textwidth}}p{0.345\textwidth}@{}}
\textit{notation} & \textit{meaning} & \textit{notation} & \textit{meaning} \\[3pt]
\endhead
$\norm{\cdot}$ & \mbox{the Euclidean norm} & $\nu_0$ & \mbox{reference quotient $\S_\#\sigma$} \tabularnewline
$\ip{\cdot}{\cdot}$ & \mbox{the Euclidean scalar product} & $\sigma_{\S}$ & \mbox{reference leaf conditional} \tabularnewline
$\Id$ & \mbox{identity map} & $g_{\S}$ & \mbox{geometric density (Thm.~\ref{thm:3.1})} \tabularnewline
$L^*$ & \mbox{adjoint of $L$} & $m_{\S}$ & \mbox{leaf mass $\int_{\S}hg_{\S}\dd\Hh^k$} \tabularnewline
$P_V$ & \mbox{orthogonal projection onto $V$} & $h_{\S}$ & \mbox{leaf density $hg_{\S}/m_{\S}$} \tabularnewline
$B(x,r)$ & \mbox{open ball with centre $x$, radius $r$} & $\lambda_{\S}$ & \mbox{geometric measure $g_{\S}\Hh^k|_{\S}$} \tabularnewline
$\omega_n$ & \mbox{unit-ball volume in $\R^n$} & $\T^{k,0}$ & \mbox{conull subfamily of $\T^k$ (Prop.~\ref{prop:3.4})} \tabularnewline
$\Aff E$ & \mbox{affine hull of $E$} & $\Phi$ & \mbox{affine leaf chart \eqref{eq:3.7}} \tabularnewline
$\Conv E$ & \mbox{convex hull of $E$} & $L_a$ & \mbox{linear part of the chart at $a$} \tabularnewline
$\ri C$ & \mbox{relative interior of $C$} & $\Sigma_q$ & \mbox{transverse slice $\Phi(A\times\{q\})$} \tabularnewline
$\cl C$ & \mbox{relative closure of $C$} & $\zeta$ & \mbox{anchor measure $\Hh^{n-k}|_A$} \tabularnewline
$\partial C$ & \mbox{relative boundary of $C$} & $f(a,q)$ & \mbox{chart density \eqref{eq:3.9}} \tabularnewline
$DF$ & \mbox{differential of $F$} & $W(a)$ & \mbox{chart normalising mass (Lem.~\ref{lem:3.5})} \tabularnewline
$D^2F$ & \mbox{second derivative of $F$} & $J_a$ & \mbox{leaf Jacobian $\sqrt{\det(L_a^*L_a)}$} \tabularnewline
$D_{\S}$ & \mbox{intrinsic derivative on $\Aff\S$} & $J_k(L)$ & \mbox{$k$-dimensional Jacobian of $L$} \tabularnewline
$D^2_{\S}$ & \mbox{intrinsic Hessian on $\Aff\S$} & $\ell_\pm(x,v)$ & \mbox{chord endpoint distances (Cor.~\ref{cor:3.8})} \tabularnewline
$\mathcal C^{1,1}$ & \mbox{functions with Lipschitz derivative} & $H_{o,t}$ & \mbox{homothety $x\mapsto o+t(x-o)$} \tabularnewline
$\Lip_{\mathrm{loc}}$ & \mbox{locally Lipschitz functions} & $s_a$ & \mbox{model sine (\textsection\,\ref{sec:geometry})} \tabularnewline
$\lambda$ & \mbox{ambient Lebesgue measure} & $R_{\kappa,N}$ & \mbox{admissible contraction radius} \tabularnewline
$\lambda^d$ & \mbox{$d$-dimensional Lebesgue measure} & $\beta_{\kappa,N}^{(t)}$ & \mbox{distortion coefficient \eqref{eq:2.4}} \tabularnewline
$\Hh^k$ & \mbox{$k$-dimensional Hausdorff measure} & $\MCP(\kappa,N)$ & \mbox{measure contraction property} \tabularnewline
$\mu|_E$ & \mbox{restriction of $\mu$ to $E$} & $A$ & \mbox{maximal monotone relation} \tabularnewline
$F_\#\mu$ & \mbox{pushforward of $\mu$ by $F$} & $T$ & \mbox{a Borel selection of $A$} \tabularnewline
$\mu\ll\eta$ & \mbox{absolute continuity} & $\dom A$ & \mbox{domain of $A$} \tabularnewline
$\mu\sim\eta$ & \mbox{mutual absolute continuity} & $\ran A$ & \mbox{range of $A$} \tabularnewline
$\supp\mu$ & \mbox{support of $\mu$} & $M_A$ & \mbox{multivalued locus $\{x:\#A(x)>1\}$} \tabularnewline
$\norm{\gamma}_{\TV}$ & \mbox{total variation $|\gamma|(Z)$} & $J_\epsilon$ & \mbox{resolvent $(\Id+\epsilon A)^{-1}$} \tabularnewline
$\mathbf1_E$ & \mbox{indicator of $E$} & $A_\epsilon$ & \mbox{the Yosida approximation of $A$} \tabularnewline
$X$ & \mbox{ambient closed convex set} & $F_p$ & \mbox{inverse fibre $A^{-1}(p)$} \tabularnewline
$\Omega$ & \mbox{interior of $X$} & $Y_p$ & \mbox{fibre section $F_p\cap X$} \tabularnewline
$\mu$ & \mbox{ambient finite measure} & $\S_{\epsilon,p}$ & \mbox{resolvent leaf $\epsilon p+F_p$} \tabularnewline
$h$ & \mbox{ambient density with respect to $\lambda$} & $\eta$ & \mbox{fibre quotient $T_\#\mu$} \tabularnewline
$L_h$ & \mbox{the Lebesgue points of $h$} & $\mu_p$ & \mbox{fibre conditional probability} \tabularnewline
$n$ & \mbox{ambient dimension} & $\eta_\epsilon$ & \mbox{quotient $(A_\epsilon)_\#\mu$} \tabularnewline
$m$ & \mbox{target dimension} & $\mu_p^\epsilon$ & \mbox{conditional for $A_\epsilon$} \tabularnewline
$k$ & \mbox{leaf or fibre dimension} & $t_p^\epsilon$ & \mbox{translation $x\mapsto x-\epsilon p$} \tabularnewline
$d$ & \mbox{leaf codimension $n-k$} & $\widehat\mu_p^\epsilon$ & \mbox{translated conditional $(t_p^\epsilon)_\#\mu_p^\epsilon$} \tabularnewline
$\kappa$ & \mbox{curvature parameter} & $G$ & \mbox{monotone graph $\graph A$ (Prop.~\ref{prop:5.4})} \tabularnewline
$N$ & \mbox{synthetic dimension parameter} & $D_GF$ & \mbox{tangential differential on $G$} \tabularnewline
$u$ & \mbox{nonexpansive map $\R^n\to\R^m$} & $J_GF$ & \mbox{tangential Jacobian on $G$} \tabularnewline
$\CC(\R^n)$ & \mbox{nonempty closed convex subsets} & $\partial f$ & \mbox{convex subdifferential} \tabularnewline
$\S$ & \mbox{isometric leaf of $u$} & $f^*$ & \mbox{convex conjugate} \tabularnewline
$\S(x)$ & \mbox{the Borel leaf map (\textsection\,\ref{sec:geometry})} & $\Gamma_f$ & \mbox{graph map $x\mapsto(x,f(x))$} \tabularnewline
$N(u)$ & \mbox{nondifferentiability set of $u$} & $G_p$ & \mbox{graph face $\Gamma_f(F_p)$} \tabularnewline
$\T^k$ & \mbox{family of $k$-dimensional leaves} & $P_C$ & \mbox{metric projection onto $C$} \tabularnewline
$T_k$ & \mbox{union of $k$-dimensional leaves} & $N_C$ & \mbox{normal cone to $C$} \tabularnewline
$V_{\S}$ & \mbox{tangent space of $\S$} & $\rho$ & \mbox{ambient potential, $h=e^{-\rho}$} \tabularnewline
$T_{\S}$ & \mbox{linear isometry induced by $u|_{\S}$} & $\Ric_{\mu,N}$ & \mbox{weighted Ricci tensor \eqref{eq:6.2}} \tabularnewline
$P_{\S}$ & \mbox{orthogonal projection onto $V_{\S}$} & $\CD(\kappa,N)$ & \mbox{curvature-dimension condition} \tabularnewline
$Q_{\S}$ & \mbox{projection onto $T_{\S}(V_{\S})$} & $p_{\S}$ & \mbox{affine factor $g_{\S}/m_{\S}$ \eqref{eq:6.5}} \tabularnewline
$c_u$ & \mbox{equality defect \eqref{eq:2.3}} & $j(a,z)$ & \mbox{chart geometric density (Cor.~\ref{cor:7.7})} \tabularnewline
$\nu$ & \mbox{leaf quotient $\S_\#\mu$} & $w$ & \mbox{smooth positive reference density} \tabularnewline
$\mu_{\S}$ & \mbox{leaf conditional probability} & $\sigma$ & \mbox{reference probability $w\lambda$} \tabularnewline
\end{longtable}
\endgroup

\section*{Declarations}
\textbf{Competing interests.}
The author has no relevant financial or non-financial interests
to disclose.

\textbf{Use of AI tools.}
OpenAI's ChatGPT was used during preparation for drafting and editing, exploration of arguments, checking calculations and references, and literature searches. The paper has been thoroughly checked by the author. Responsibility for the mathematical content and attribution rests with the author.

\end{document}